\definecolor{rosso}{rgb}{0.85,0,0}
\definecolor{azzurro}{rgb}{0.13, 0.67, 0.8}
\newcommand{\ov}[1]{\overline{#1}}
\renewcommand{\hat}[1]{\widehat{#1}}
\let\badeps\epsilon
\newcommand{\eps}{\varepsilon}
\newcommand{\pd}{\partial}
\newcommand{\Lpanarh}{\mathcal L}
\newcommand{\Lpanarv}{\tilde{{\mathcal L}}}
\newcommand{\dn}{\partial_{\bnn}}
\def\bnn{{\boldsymbol n}}
\def\bnu{{\boldsymbol \nu}}
\def\<#1>{\mathopen\langle #1\mathclose\rangle}
\renewcommand{\tilde}{\widetilde}
\def\non{\notag}
\def\erre{{\mathbb{R}}}
\def\enne{{\mathbb{N}}}
\newcommand{\bigchi}{\ensuremath{\mathrm{\mathcal{X}}}}
\newcommand{\charfcn}[1]{\bigchi_{#1}} 
\theoremstyle{plain}
\newtheorem{lem}{Lemma}[section]
\newtheorem{prop}{Proposition}[section]
\newtheorem{cor}{Corollary}[section]
\newtheorem{remark}{Remark}[section]
\numberwithin{equation}{section}
\def\Lip{Lip\-schitz}
\def\jump#1{[#1]^+_-}
\def\Sp{{S_+}}
\def\Sm{{S_-}}
\def\th{\vartheta}
\def\Lm{{\lambda}_-}
\def\Lp{{\lambda}_+}
\def\Lmr{({\lambda}_- r)}
\def\LmR{({\lambda}_- R)}
\def\Lmqs{({\lambda}_- \qstar)}
\def\Lpqs{({\lambda}_+ \qstar)}
\def\Lmq{({\lambda}_- q)}
\def\Lpq{({\lambda}_+ q)}
\def\Lpr{({\lambda}_+ r)}
\def\LpR{({\lambda}_+ R)}
\def\Il{\mathbb{I}_\ell}
\def\Kl{\mathbb{K}_\ell}
\def\Iz{\mathbb{I}_0}
\def\Kz{\mathbb{K}_0}
\def\rad{Y}
\def\multibold #1{\def\arg{#1}%
  \ifx\arg\pto \let\next\relax
  \else
  \def\next{\expandafter
    \def\csname #1#1\endcsname{{\bf #1}}%
    \multibold}%
  \fi \next}
\def\pto{.}
\def\multimathbb #1{\def\arg{#1}%
  \ifx\arg\pto \let\next\relax
  \else
  \def\next{\expandafter
    \def\csname #1#1#1\endcsname{{\mathbb #1}}%
    \multimathbb}%
  \fi \next}
\def\multical #1{\def\arg{#1}%
  \ifx\arg\pto \let\next\relax
  \else
  \def\next{\expandafter
    \def\csname cal#1\endcsname{{\cal #1}}%
    \multical}%
  \fi \next}
\def\multimathop #1 {\def\arg{#1}%
  \ifx\arg\pto \let\next\relax
  \else
  \def\next{\expandafter
    \def\csname #1\endcsname{\mathop{\rm #1}\nolimits}%
    \multimathop}%
  \fi \next}
\def\nn{\boldsymbol{n}}
\def\bmin{b_{d,-}^{{\rm in}}}
\def\bmex{b_{d,-}^{{\rm ext}}}
\def\cmex{c_{d,-}^{{\rm ext}}}
\def\emin{e_-^{{\rm in}}}
\def\emex{e_-^{{\rm ext}}}
\def\mumin{\mu_-^{{\rm in}}}
\def\mumin{\mu_-^{{\rm in}}}
\def\mumex{\mu_-^{{\rm ext}}}
\def\qud{(q_1,q_2)}
\def\qudst{(\qstaru,\qstard)}
\def\qstar{{q^\star}}
\def\qstaru{{q^\star_1}}
\def\qstard{{q^\star_2}}
\def\mustin{\mu_-^{\star, {\rm in}}}
\def\mustpl{\mu_+^{\star}}
\def\mustext{\mu_-^{\star, {\rm ext}}}
\def\uin{u_-^{{\rm in}}}
\def\upl{u_+}
\def\uext{u_-^{{\rm ext}}}
\def\Uin{U_-^{{\rm in}}}
\def\Upl{U_+}
\def\Uext{U_-^{{\rm ext}}}
\def\vin{v_-^{{\rm in}}}
\def\vext{v_-^{{\rm ext}}}
\def\Uin{U_-^{{\rm in}}}
\def\Upl{U_+}
\def\Uext{U_-^{{\rm ext}}}
\def\Iell{{\Il}}
\def\Kell{{\Kl}}
\def\dplus{{\frac{S_+}{\rho_+}}}
\def\dmin{{\frac{S_-}{\rho_-}}}
\title{
	On a  Mullins--Sekerka model for the growth of active droplets modelling protocells: Stability analysis and numerical computations
}
\author{Harald Garcke \footnotemark[1] \and Kei Fong Lam \footnotemark[2] \and Robert N\"urnberg \footnotemark[3] \and Andrea Signori \footnotemark[4]}
 \date{}
\begin{document}
	
\maketitle

\begin{abstract}
\noindent
Mullins--Sekerka models with chemical reactions can lead to scenarios where drop\-lets grow, become unstable, split, grow {and undergo further division}. These grow and division cycles have been proposed as a model for protocells and {are} {believed} to play a {fundamental}  role in living systems by providing chemical compartments which are important in the organization of living systems.
	{This} paper analyses  chemically active Mullins--Sekerka models. Existence of radially symmetric solutions is shown and  a detailed stability analysis in radial as well as planar situations is given. In particular, we also analyze multilayered solutions leading to shell-type situations. {Finally, we introduce a numerical method based on a parametric finite element approach that explicitly accounts for topological changes, thereby allowing} for droplet splitting and merging.
	Several numerical simulations verify the findings of the theoretical stability analysis and show complex {dynamical behavior, including multiple} instabilities, splittings of droplets and appearance of shell-type solutions.
	
\noindent
\end{abstract}

\noindent {{\bf Keywords}: Mullins--Sekerka model, free boundary problem, radial solutions, shell-type solutions, stability analysis, active droplets, parametric finite element method}

\vskip3mm
\noindent {\bf AMS (MOS) Subject Classification:} 
35K55, 
35K61, 
35R35, 
76T30, 
92D25  

\renewcommand{\thefootnote}{\fnsymbol{footnote}}
\footnotetext[1]{Fakult{\"a}t f\"ur Mathematik, Universit{\"a}t Regensburg, 93040 Regensburg, Germany
({\texttt harald.garcke@ur.de}).}
\footnotetext[2]{Department of Mathematics, Hong Kong Baptist University, Kowloon Tong, Hong Kong ({\texttt akflam@hkbu.edu.hk}).}
\footnotetext[3]{Department of Mathematics, University of Trento, 38123 Trento, Italy
({\texttt robert.nurnberg@unitn.it}).}
\footnotetext[4]{Department of Mathematics, Politecnico di Milano, 20133 Milano, Italy ({\texttt andrea.signori@polimi.it}), Alexander von Humboldt Research Fellow.}

\renewcommand{\thefootnote}{\arabic{footnote}}

\section{Introduction}

	In phase separating systems  with chemical reactions{, growth regimes may arise in which Ostwald ripening does not occur}. Instead, droplets only grow
to a certain size and then coexist with the surrounding phase, see \cite{zwickerostwald}.  The formation of such droplets has been suggested to play {a} crucial  role in living systems by providing biochemical compartments which are important in the spatial organization of biological processes.  For {certain parameter}  regimes  describing surface tension, diffusion and chemical reactions{,} growing droplets can become unstable.  {This may trigger droplet splitting, after which} the smaller droplets  grow and later  split again. This spontaneous droplet division can hence lead to growth and division cycles and   it has been suggested in \cite{Active_drops} as a model for the formation of protocells. 
Recently, the present authors in \cite{OOL} derived a new Mullin{--}Sekerka model with chemical reactions which appears as the sharp interface limit of
a Cahn--Hilliard equation with {fast} chemical reactions. The latter Cahn--Hilliard model was in fact used in \cite{Active_drops} as a diffuse interface model which  models   chemically active droplet systems with growth and splitting scenarios. It is the goal of this paper to systematically analyze the new chemically active Mullins--Sekerka model and to present sharp interface computations of the model which also allow for topological changes. This enables us to compute growth and splitting scenarios within a sharp interface {context}.


%
In \cite{OOL}, we provided a detailed mathematical analysis of the Cahn--Hilliard model introduced in \cite{Active_drops}. 
{After carefully formulating the model, we} established a well-posedness result for the {resulting} system. We then applied formally matched asymptotic expansions to connect the diffuse interface model to a sharp interface that turned out to be of  Mullins--Sekerka type. Although a sharp interface model was already proposed in \cite{Active_drops}, our analysis {reveals} that the asymptotic expansions lead to a quasi-static diffusion problem that can {take} additional source terms {arising from chemical reactions at the interface. Namely, t}he model involves  the following quasi-static diffusion equations with chemical reactions in the two phases $\Omega^\pm${:}
$$- m_\pm \Delta \mu_\pm = S_\pm - \rho_\pm \mu_\pm  \quad \text{in $\Omega^\pm$,}$$
coupled {with} equations on the free boundary relating 
{the chemical potentials $\mu_\pm$} to the  mean curvature of the interface and an equation for the normal velocity of the interface. In particular, we allow for sources and sinks due to reactions at the interface.

In this paper, we  focus on the Mullins--Sekerka-type free boundary problem.  We prove that{,} for sufficiently strong sinks at the interface, droplets necessarily shrink. Specifically, we analyze the existence of radial solutions and examine their stability {with respect to both radial and} non-radial perturbations. 
We demonstrate that, for certain parameter {regimes}, two distinct radial stationary solutions exist, one stable and one unstable to radial perturbations. We then consider non-radial perturbations and can identify parameter regimes
for which stationary radial solutions {lose stability}. For some of theses regimes the two-mode perturbation can be shown to be {the} most unstable which {can}  then lead to the splitting of one {droplet} into two. Additionally, we explore the evolution of shell-like solutions and {carry out} an involved stability analysis. Th{is} analysis  is non-classical as perturbations at two disjoint interfaces influence each other through a coupling via the bulk. Finally, we present several numerical simulations which {{confirm}
	 the theoretical} findings.
{Overall,}  the present paper provide{s} a deeper insight into the dynamics of droplets in phase-separating systems influenced by chemical reactions{, within a} sharp interface {framework}.

{Similar unstable growth phenomena have previously  been observed  in Cahn--Hilliard and Mullins--Sekerka type problems with source terms in tumour growth.
We refer to the works of {Cristini} and  Lowengrub together with their co-authors \cite{CristiniLN03,CristiniLLW09}, Friedman and Hu \cite{FriedmanHu06}, Escher and Matioc \cite{EscherM11} and \cite{Hawkins-DaarudZO12, GarckeLSS16, GarckeLNS18}
for further details. However, in these models so far no multiple splitting scenarios and shell type solutions have been observed,
which therefore represent a new feature of the model investigated in this paper. The pioneering work of Brangwynne and Hyman \cite{BrangwynneHyman09,BrangwynneHyman11} demonstrated that droplets originating from phase separation play a {pivotal} role in living cells.
They discovered a completely new physical mechanism  for cellular interactions between proteins and other biomolecules  in the absence  of membranes. The researchers described dynamic, fluid-like droplets that form rapidly through phase separation --- similar to oil droplets in water --- creating temporary structures that are protected from the outside. Since their discovery, they and others have shown that such membrane-less liquid condensates play a role in numerous cellular processes, such as cellular signalling, cell division, the nested structure of nucleoli in the cell nucleus, and DNA regulation. Their discovery represents  a fundamental advance in understanding cellular organisation.  For reviews focusing on the role of non-membrane-bound compartments 
 that arise  as phase-separated droplets in biology we refer to the reviews \cite{HymanWJ14,Brangwynne13}.
It has  also been  suggested that the segregation  of molecules by phase separation has played an important role during the early stages in the origin of life
\cite{Active_drops}.
More recently, both theoretical and experimental studies have shown that such systems can also form stable spherical shells
  \cite{bauermann2023formation, BergmannBBetal23}.
However, a thorough mathematical analysis of the underlying models is missing and the present paper aims to initiate   such an analysis.}%

This paper is organized as follows. In Section 2{,} we precisely formulate the Mullins--Sekerka  system with chemical reactions, non-dimensionalize 
the problem, identify regimes in which droplets decay and {briefly recall} a related diffuse interface model which is of Cahn--Hilliard type. In Section 3{,} we analyze 
radially symmetric solutions with {a single} interface. 
For certain parameter values{,} we {establish the} existence of two stationary  spherical solutions. A stability analysis is performed which helps to identify parameter regimes in which 
splitting into {two} droplets can be expected. In Section 4{,} we analyze multi-layered solutions{,} first in the planar case and {then} in the radially symmetric case. Stability for multi-layered solutions has to be studied by using for each perturbation mode an intricate interaction between the layers.
Section 5 introduces a parametric finite element method for the chemically active Mullins--Sekerka problem and  presents several numerical computations including very complex splitting scenarios which are the {first} {to be} produced with a sharp interface approach. {Finally, two appendices collect auxiliary} computations involving radial solutions and modified Bessel functions which are needed for the stability analysis.

\section{The mathematical models}
\label{SEC:MOD}

We begin this section by introducing mathematical models that are the central focus of this paper. The model for active droplets, initially presented in \cite{zwickerostwald, Active_drops, bauermann2023formation}, employs a phase-field approach based on a variant of the Cahn--Hilliard equation, see \eqref{eq:sys21} below. 
{In \cite{OOL}, we first refined the previously introduced model by placing it within a rigorous mathematical framework and establishing its well-posedness. We then performed a formally matched asymptotic analysis to derive a Mullins--Sekerka-type free boundary problem from the phase-field formulation. This sharp-interface model, which can be stated as follows, constitutes the main focus of the present work.}

{%
Throughout, let $\Omega$ be a {\Lip\ continuous} bounded domain in $\mathbb{R}^d$, where $d \in \{2,3\}$, with boundary $\partial\Omega$, and let $T>0$ be a fixed but arbitrary time. Moreover, $\dn$ denotes the derivative in the direction of the unit outer normal $\nn$ to $\partial \Omega$.
}

\subsection{Mullins--Sekerka system with reactions}

We assume {that} $\Omega$ is subdivided into two disjoint open sets $\Omega^+(t)$ and $\Omega^-(t)$ that are separated by an evolving interface $\Sigma(t) := \pd \Omega^+(t) \cap \pd \Omega^-(t)$. A typical {configuration} is {given by} $\Omega^+$ {being} enclosed by $\Omega^-$ with $\Sigma \cap \pd \Omega = \varnothing$. 
The governing equations of the Mullins--Sekerka problem, which describe the evolution of active droplets {can be formulated as follows. Given
$\Sigma(0) = \Sigma_0$, for $t \in (0,T)$ {we seek the evolving interface} $\Sigma(t)$ partitioning
$\Omega$ as described above{, together with a chemical potential} $\mu(t,\cdot): \Omega \to \mathbb R$ such that} 
\begin{subequations}\label{SharpI}
\begin{alignat}{2}
	\label{sharpI:1:p}
	 - m_+ \Delta \mu &= \Sp - \rho_+ \mu  \quad &&\text{in $\Omega^+$,}
		\\
        \label{sharpI:1:m} - m_- \Delta \mu & =\Sm - \rho_- \mu  \quad && \text{in $\Omega^-$,}
	\\
	\label{sharpI:2}
	\mu &=  	{\alpha}	\kappa
	\qquad && \text{on $\Sigma$,}
	\\
	\label{sharpI:4}
	 - 2 {\cal V} & = \jump{m\nabla \mu} \cdot {\bnu}
	+ S_I
	\qquad && \text{on $\Sigma$,}
	\\
	\label{sharpI:5}
	 \dn \mu & =0
	\qquad && \text{on $\partial \Omega$,}
\end{alignat}
\end{subequations}
where we take
$S_+<0$, while $S_-$,
$\rho_\pm $, $m_{\pm}$ and {$\alpha$}
 are positive parameters,  and $S_I$ is the interfacial reaction constant.
In the above, the vector $\bnu$ denotes the unit normal to the interface $\Sigma$ pointing into $\Omega^+$, $\kappa = - \div_{\Sigma} \bnu$ represents the mean curvature of $\Sigma$ defined as the sum of the principal curvatures of $\Sigma$ {or, equivalently,} as the negative surface divergence of the unit normal $\bnu$. We {adopt} the convention that $\kappa$ is positive if $\Sigma$ is convex. The quantity ${\cal V}$ is the normal velocity of the interface in the direction of the normal $\bnu$, we refer to Figure~\ref{fig:setting} for an illustration of the geometric setting. We also consider settings where $\Sigma$ intersects the external boundary $\pd \Omega$, and for these cases we prescribe a $90^{\circ}$ {contact} angle condition.\\

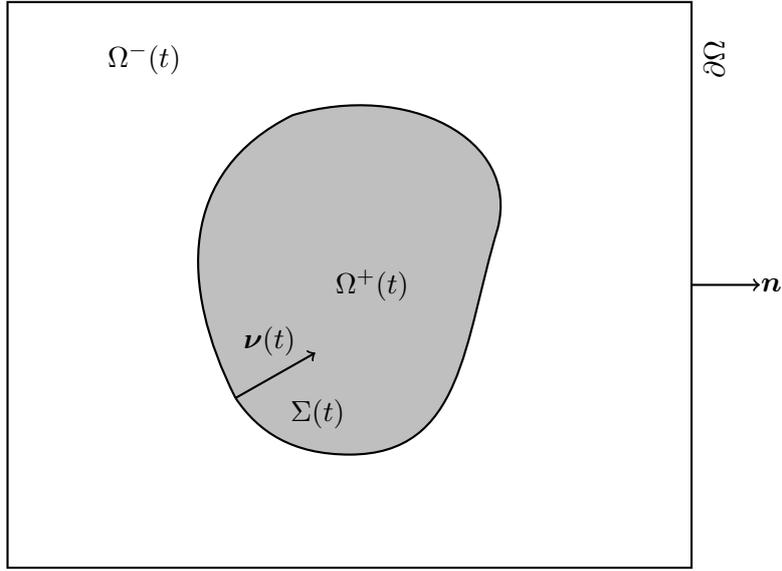
\begin{figure}[h]
\centering
\begin{tikzpicture}[scale=1.5]
\draw[thick] (0,0) rectangle (6,5);

\filldraw[color=black!100, fill=black!25, thick, line join=bevel]
  (2,1.5) .. controls (1.5,2.5) and (1.5,3.5) .. (2.5,4)
          .. controls (3.5,4.3) and (4.5,3.8) .. (4.3,3)
          .. controls (4,2) and (4,1) .. (3,1)
          .. controls (2.5,1) and (2.2,1.2) .. (2,1.5)
  node[pos=0.5, above right] {$\Sigma(t)$};

\node at (3.2,2.5) {$\Omega^{+}(t)$};
\node at (1.2,4.5) {$\Omega^{-}(t)$};
\draw[->, thick] (2.0,1.5) -- (2.7,1.9);
\node at (2.3,2.0) {$\boldsymbol{\nu}(t)$};
\draw[->, thick] (6,2.5) -- (6.6,2.5);
\node at (6.7,2.5) {$\boldsymbol{n}$};
\node[rotate=90] at (6.2,4.5) {$\partial\Omega$};
\end{tikzpicture}
\caption{The geometric setting involving the partition of $\Omega$ into two disjoint time dependent open sets $\Omega^+(t)$ and $\Omega^-(t)$ that are separated by an interface $\Sigma(t)$. {The unit normal $\bnu$ of $\Sigma(t)$ points into $\Omega^+(t)$.} 
}
\label{fig:setting}
\end{figure}

Additionally, for $x \in \Sigma(t)$ and a function $u$, we define its jump across the interface at $(t, x)$ as:
\begin{equation*}
	[u]^+_-(t,x):=\lim_{\substack{y\to x\\y\in\Omega^+(t)}} u(t,y) -
	\lim_{\substack{y\to x\\y\in\Omega^-(t)}} u(t,y)\,.
\end{equation*}
We will also frequently use the notation $\mu_\pm$ for $\mu$ when restricted to the sets $\Omega^\pm$.

It is important to note that this system is {closely} related to the well-known Mullins--Sekerka free boundary problem \cite{BDGP}. The key differences here are the presence of a constant source term, $S_I$, on the right-hand side of \eqref{sharpI:4}, as well as an affine linear term in the quasi-static diffusion equations \eqref{sharpI:1:p} and \eqref{sharpI:1:m}.
 
{Performing} a nondimensionalization of {\eqref{SharpI}}, similarly done as in \cite{OOL}{,} leads to the following dimensionless system: 
\begin{subequations}\label{Sharp:adim}
\begin{alignat}{3}
\label{sharpIIn:1:p}
	-  m^*\Delta \mu &=
		S^* -  \rho^* \mu \quad && \text{in $ \Omega^+$},
		\\
		\label{sharpIIn:1:m}
			-  \Delta  \mu &=  1 -
	\mu  \quad && \text{in $  \Omega^-$,}
	\\
	\label{sharpIIn:2}
	 \mu & = {\alpha^*}\kappa
	\qquad & &\text{on $ \Sigma$,}
	\\
	\label{sharpIIn:4}
	 - 2 { \cal  V} &= m^* \nabla \mu_+ \cdot { \bnu} - \nabla \mu_- \cdot { \bnu}
	+ S_I^*
	\qquad && \text{on $\Sigma$,}
	\\
	\label{sharpIIn:5}
	 \dn \mu &=0
	\qquad && \text{on $\partial \Omega$,}
\end{alignat}
\end{subequations}
where after choosing suitable units for length $\tilde{x} = \sqrt{m_-/\rho_-}$, {for} time $\tilde{t} = 1/S_-$ and {for the} chemical potential ${\tilde{\mu}_-} = S_-/\rho_-$, we have the dimensionless parameter
\begin{align}
	\label{cap:length}
	 \alpha^* =\frac{\alpha}{\tilde x} \frac 1{{\tilde{\mu}_-}}=
	 \frac{\alpha \rho_-}{S_-}\sqrt{\frac{\rho_-}{m_-}},
\end{align}
as the ratio of a modified capillary length $c_l := \frac{\alpha \rho_-}{S_-}$ and the length scale $\tilde{x}$. 
Moreover, the relative mobility $m^*$,  the relative reaction coefficients $S^*$ and $\rho^*$, and the nondimensional interface reaction term $S_I^*$ are given by 
\begin{equation*}
	m^*=\frac {m_+} {m_-},\quad S^*=\frac {S_+} {S_-},\quad
	\rho^*=\frac {\rho_+} {\rho_-}, \quad S_I^* = \frac{1}{S_-} \sqrt{\frac{\rho_-}{m_-}} S_I.
\end{equation*}

In the following {result}, we show that in {two spatial dimensions} and for sufficiently negative $S_I${,} the perimeter of $\Omega^+(t)$ decreases over time.
\begin{prop}[Dissipative inequality in two space dimensions]
Suppose that \begin{align}\label{diss:cond}
	2 \pi \alpha S_I + \frac{|\Omega|}{{4}}\Big ( \frac{S_+^2}{\rho_+} + \frac{S_-^2}{\rho_-} \Big ) \leq 0
\end{align}
and that {the spatial dimension $d=2$.}
Then any solution of \eqref{SharpI}{,} 
{with $\Sigma(t)$ being} a closed simple curve{,} satisfies
 \[
 \alpha \frac{d}{dt} |\Sigma(t)| \leq 0.
 \]
 
\end{prop}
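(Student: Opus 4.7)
The plan is to differentiate the perimeter directly, express the result entirely in terms of bulk quantities using the PDEs, and then estimate. The starting point is the standard first variation of arc length, $\frac{d}{dt}|\Sigma(t)| = -\int_\Sigma \kappa\,\mathcal{V}\,d\mathcal{H}^{d-1}$, which is valid because $\bnu$ points into $\Omega^+$, so a convex droplet ($\kappa>0$) shrinking in the $\bnu$-direction ($\mathcal{V}>0$) indeed has decreasing perimeter. Substituting $\mathcal{V}$ from \eqref{sharpI:4} and then using the Gibbs--Thomson relation $\alpha\kappa=\mu$ from \eqref{sharpI:2} to replace $\alpha\kappa$ by $\mu$ on $\Sigma$ yields
\begin{equation*}
\alpha\,\frac{d}{dt}|\Sigma(t)|
= \tfrac{1}{2}\int_\Sigma \mu\,[m\nabla\mu]_-^+\!\cdot\!\bnu\,d\mathcal{H}^{d-1}
 + \tfrac{S_I}{2}\int_\Sigma \mu\,d\mathcal{H}^{d-1}.
\end{equation*}

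Next I would rewrite the boundary integral as a bulk quantity. Testing \eqref{sharpI:1:p} and \eqref{sharpI:1:m} against $\mu$, integrating by parts on each of $\Omega^\pm$, and taking into account that the outward normal of $\Omega^-$ on $\Sigma$ equals $\bnu$ while that of $\Omega^+$ on $\Sigma$ equals $-\bnu$, together with the homogeneous Neumann condition \eqref{sharpI:5} on $\partial\Omega$, gives the identity
\begin{equation*}
\int_\Sigma \mu\,[m\nabla\mu]_-^+\!\cdot\!\bnu\,d\mathcal{H}^{d-1}
= \int_\Omega (\mu S-\rho\mu^2)\,dx - \int_\Omega m|\nabla\mu|^2\,dx,
\end{equation*}
with the obvious meaning for $S,\rho,m$ on $\Omega^\pm$. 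This is the step that replaces the unknown jump by an explicit bulk dissipation minus a reaction contribution that can be bounded pointwise. For the second term in the expression for $\alpha\,\frac{d}{dt}|\Sigma|$, I would use once more $\mu=\alpha\kappa$ together with the Gauss--Bonnet identity $\int_\Sigma \kappa\,d\mathcal{H}^1=2\pi$ for a simple closed plane curve, which is precisely where the hypothesis $d=2$ enters, to obtain $\int_\Sigma \mu\,d\mathcal{H}^1=2\pi\alpha$.

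Finally, I would combine the two and apply the elementary pointwise inequality $\mu S-\rho\mu^2\leq S^2/(4\rho)$, together with $|\Omega^+|+|\Omega^-|=|\Omega|$, which yields
\begin{equation*}
\alpha\,\frac{d}{dt}|\Sigma(t)|
\;\leq\;
\tfrac{1}{2}\Bigl[\,2\pi\alpha S_I + \tfrac{|\Omega|}{4}\Bigl(\tfrac{S_+^2}{\rho_+}+\tfrac{S_-^2}{\rho_-}\Bigr)\Bigr]
-\tfrac{1}{2}\int_\Omega m|\nabla\mu|^2\,dx,
\end{equation*}
and the bracket is non-positive by assumption \eqref{diss:cond} while the bulk term is manifestly non-positive. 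The main thing to be careful about, rather than any genuine obstacle, is the bookkeeping of sign conventions: the orientation of $\bnu$ into $\Omega^+$, the sign of $\kappa$ (positive for convex $\Sigma$), the sign in the factor $-\tfrac12$ coming from $-2\mathcal{V}=[m\nabla\mu]_-^+\!\cdot\!\bnu+S_I$, and the orientation needed for Gauss--Bonnet to give $+2\pi$ rather than $-2\pi$. The argument is two-dimensional essentially only through Gauss--Bonnet; in higher dimensions $\int_\Sigma\kappa$ is not a topological constant and the same strategy would not produce a closed dissipative inequality.
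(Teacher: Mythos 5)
Your proof is correct and is essentially the paper's argument in a slightly reorganized order: you start from the first variation formula $\frac{d}{dt}|\Sigma| = -\int_\Sigma \kappa\,\mathcal{V}$, substitute the Stefan condition, and then integrate the bulk PDEs by parts, whereas the paper multiplies the PDEs by $\mu_\pm$, integrates by parts, and then invokes the Stefan condition and the first-variation formula to produce the energy identity \eqref{energy:id}; the two routes are identical step-for-step, just traversed in opposite directions. Your pointwise completion-of-the-square bound $S\mu-\rho\mu^2\leq S^2/(4\rho)$ is the same estimate the paper obtains via Young's inequality, and your use of Gauss--Bonnet and the sign bookkeeping (orientation of $\bnu$, the factor $-\tfrac12$, positivity of $\kappa$ for convex $\Sigma$) all check out.
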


\begin{proof}
{Multiplying  \eqref{sharpI:1:p} with $\mu_+$ and \eqref{sharpI:1:m} with $\mu_-$, integrating, performing  integration  by parts and adding the resulting equalities, we obtain}
\begin{equation}\label{energy:id}
\begin{aligned}
	& 2 \alpha\frac d{dt}  \big|\Sigma(t)\big|
	+ m_+ \int_{\Omega^+}  |\nabla \mu_+|^2
	+ m_- \int_{\Omega^-} |\nabla \mu_-|^2
	+ \rho_+ \int_{\Omega^+} | \mu_+|^2
	+ \rho_- \int_{\Omega^-}| \mu_-|^2
	\\ & \quad 
	= 
	\int_{\Omega^+} S_+ \mu_+
	+ \int_{\Omega^-} S_-  \mu_-
	+\alpha S_I \int_{\Sigma} \kappa.
	\end{aligned}
\end{equation}
{As $\Sigma$ is a closed simple curve,} we can apply the Gauss--Bonnet theorem to express the last term as
\[
\alpha S_I  \int_{\Sigma} \kappa =  2 \pi \alpha S_I.
\]
To control the right-hand side terms involving $\mu_+$ and $\mu_-$, we apply Young's inequality {to find}
\begin{align*}
	\int_{\Omega^+} S_+ \mu_+
	+ \int_{\Omega^-} S_-  \mu_-
	\leq 
	{\rho_+} \int_{\Omega^+} | \mu_+|^2
	+ {\rho_-} \int_{\Omega^-}| \mu_-|^2
	+ \frac 1{{4} \rho_+} S_+^2 |{\Omega^+}| 
	+ \frac 1{{4} \rho_-} S_-^2 |{\Omega^-}| .
\end{align*}
Combining the above, we obtain
\begin{align*}
	& 2 \alpha\frac d{dt}  \big|\Sigma(t)\big|
	{+ m_+ \int_{\Omega^+}  |\nabla \mu_+|^2
	+ m_- \int_{\Omega^-} |\nabla \mu_-|^2}
	\\ & \quad 
	\leq
	\frac 1{{4} \rho_+} S_+^2 |{\Omega^+}| 
	+ \frac 1{{4}  \rho_-} S_-^2 |{\Omega^-}|
	+ 2 \pi \alpha S_I \leq \frac{|\Omega|}{{4} } \Big ( \frac{S_+^2}{\rho_+} + \frac{S_-^2}{\rho_-} \Big ) + 2 \pi \alpha S_I.
\end{align*}
The condition \eqref{diss:cond} guarantees that the right-hand side is nonpositive.
\end{proof}

\begin{remark}
We note that condition \eqref{diss:cond} {is equivalent to requiring that}
\[
S_I \leq \frac{-|\Omega|}{{8} \alpha \pi}\Big ( \frac{S_+^2}{\rho_+} + \frac{S_-^2}{\rho_-} \Big ),
\]
which implies $S_I$ {must} be nonpositive. Furthermore, the above argument can be generalized to geometric settings where $\Omega^+ = \bigcup_{j =1}^{n} \Omega^+_{j}$ is a union of connected components such that $\Sigma_j :=\pd \Omega^+_{j}$ is a simple closed curve with associated curvature $\kappa_j$ and $\Sigma_j \cap \pd \Omega = \varnothing$ for $j = 1, \dots, n$. Then, by orienting the normals $\bnu_j$ to $\Sigma_j$ to point into $\Omega^+_{j}$ from the surrounding $\Omega^-${,} the identity \eqref{energy:id} holds for each connected component $\Omega^+_{j}$, and upon summing we have
\begin{align*}
&2 \alpha \frac{d}{dt} \sum_{j=1}^{n} |\Sigma_j(t)| + m_+ \sum_{j=1}^n \int_{\Omega^{+}_j} |\nabla \mu_+|^2 + m_- \int_{\Omega^{-}} |\nabla \mu_-|^2 + \rho_+ \sum_{j=1}^n \int_{\Omega^+_j} |\mu_+|^2 + \rho_- \int_{\Omega^-} |\mu_-|^2 \\
& \quad = \sum_{j=1}^n \int_{\Omega^+_j} S_+ \mu_+ + \int_{\Omega^-} S_- \mu_- + \alpha S_I \sum_{j=1}^n \int_{\Sigma_j} \kappa_j.
\end{align*}
Applying the Gauss--Bonnet theorem to the last term and writing $\Sigma = \bigcup_{j=1}^n \Sigma_j${,} we find that 
\begin{align*}
&2 \alpha \frac{d}{dt}  |\Sigma(t)| + m_+ \int_{\Omega^{+}} |\nabla \mu_+|^2 + m_- \int_{\Omega^{-}} |\nabla \mu_-|^2 + \rho_+  \int_{\Omega^+} |\mu_+|^2 + \rho_- \int_{\Omega^-} |\mu_-|^2 \\
& \quad =  \int_{\Omega^+} S_+ \mu_+ + \int_{\Omega^-} S_- \mu_- + 2 \pi n \alpha S_I.
\end{align*}
Then, the analogue condition to \eqref{diss:cond} that guarantees the nonincreasing of total interfacial perimeter is 
\[
2n \pi \alpha S_I + \frac{|\Omega|}{{4} }\Big ( \frac{S_+^2}{\rho_+} + \frac{S_-^2}{\rho_-} \Big ) \leq 0,
\]
where $n$ is the number of connected components of $\Omega^+$.
\end{remark}

\subsection{Related Cahn--Hilliard diffuse interface model}
For completeness, let us now briefly introduce the diffuse interface model that can be connected to the system {\eqref{SharpI}}\ in the limit of vanishing interfacial thickness. 
{In this model, the phase field $\varphi$ represents the normalized concentration difference between the two chemical components, scaled between $-1$ and $1$, while $\mu$ is the related chemical potential.}
The Cahn--Hilliard equation used to describe the dynamics of active droplets, formalized in \cite{OOL}, is given {as follows. Find $\varphi:[0,T) \times \Omega \to \mathbb R$ and $\mu:(0,T)\times\Omega \to \mathbb R$ such that}
\begin{subequations}\label{eq:sys21}
\begin{alignat}{2}
    \label{sys:1}
    \partial_t \varphi &= \div (m(\varphi) \nabla \mu) + S_\varepsilon(\varphi)
    \qquad && \text{in } (0,T) \times \Omega, \\
    \label{sys:2}
    \mu &= - \beta\varepsilon \Delta \varphi + \frac{\beta}{\varepsilon} \psi'(\varphi) 
    \qquad && \text{in } (0,T) \times \Omega, \\
    \label{sys:3}
    \dn \mu  &= \dn \varphi = 0
    \qquad && \text{on }  (0,T) \times \partial \Omega, \\
    \label{sys:4}
    \varphi(0) &=\varphi_0
    \qquad && \text{in } \Omega.
\end{alignat}
\end{subequations}
{Here}
$m(\varphi)$ is the concentration-dependent mobility where $m(\pm 1) = m_{\pm}$, $S_\eps(\varphi)$ is a source term, $\beta > 0$ is a parameter related to surface energy density, and $\varepsilon>0$ is a small length scale proportional to the thickness of the diffuse interface. The function $\psi$ is a suitable double-well potential, {and} $\varphi_0$ serves as the initial condition for $\varphi$. 
%
We assume that $\psi$ 
is the classical quartic potential:
\begin{equation*}
    \psi(r) = \frac{1}{4} (1 - r^2)^2, \quad r \in \mathbb{R}.
\end{equation*}
As for the source term $S_\varepsilon: \mathbb{R} \to \mathbb{R}$, we define for constants $S_+$, $S_-$, $K_+$, $K_-$, and $L$, 
\begin{align*}
    S_\eps(r) &= 
    \begin{cases}
        S_+ -\frac 1 \eps( K_+  (r-1)) & \text{ if } r \geq 1, \\[1ex]
        S_- + G_1(r)(S_+ - S_-) - \frac 1\eps \big(K_- G_2(r) + K_+ G_3(r) - L G_4(r{)}\big) & \text{ if } r \in (-1,1 ), \\[1ex]
        S_- - \frac 1 \eps\big( K_-   (r+1)) & \text{ if } r \leq -1,
    \end{cases}
\end{align*}
%
where for $r \in [-1,1]$,
\begin{align*}
	G_1(r) & =
	\tfrac 34 (r+1)^2- \tfrac 14(r+1)^3,
	\quad 
	G_2(r)  = 
	- \tfrac{1}{4} (1-r^2)(r-1),
	\\
	G_3 (r) & = - G_2(-r),
	\quad
	G_4(r) 
	 = \tfrac{1}{2} (1 - r^2)^2.
\end{align*}
We emphasize from the outset the presence of the $\frac 1\eps$ scaling in the source term associated with the interface thickness, in order to retain the relevant contributions in the asymptotic limit $\eps \to 0$.
Moreover, the constants $\alpha$, $\rho_{\pm}$ and $S_I$ appearing in {\eqref{SharpI}}\ are related to the system parameters of \eqref{eq:sys21} via the relations 
%
  \[
 \alpha=\frac{ \sqrt{2} \beta}3,
 \quad
 \rho_\pm= \frac {K_\pm}{2\beta}, \quad 
S_I = \frac{1}{\sqrt{2}} \Big (K_{+} - K_{-} + \frac{4}{3} L \Big ).
 \]

\section{Radially symmetric solutions}
\label{SEC:RAD}
{In this section, we investigate radially symmetric solutions to the sharp-interface problem \eqref{SharpI}.
}
\subsection{Problem setting}
We now consider the free boundary problem {{\eqref{SharpI}}} in the special domain 
\[
\Omega = B_R(0) = \{ x \in \erre^d \, : \, | x| \leq R \}
\]
which is the $d$-dimensional ball of radius $R>0$ centered at the origin, and seek radially symmetric solutions under the geometry
\begin{align*}
\Omega^+(t)= B_{q(t)}(0) ,\quad \Omega^-(t) = \Omega \setminus \ov{B_{q(t)}(0)}.
\end{align*}
Here, the {scalar} function $q(t)$ encodes the location of the time-dependent interface $\Sigma = \partial B_{q(t)}(0)$. At $p \in \Sigma$, the unit normal, normal velocity and mean curvature are given as
\begin{align*}
	\bnu (p) =- \frac p{|p|} =- \frac p{q(t)},
	\qquad
	{\cal V} = -\dot{q}(t)	,
	\qquad
	\kappa = \frac {d-1}{q(t)},
\end{align*}
with the dot denoting the time derivative.
In this radially symmetric setting, we make the ansatz
\[
\mu(t, x) = \hat{\mu}(t, |x|) = \hat{\mu}(t, r),
\quad r \in (0,R).
\]
In what follows, {for notational convenience, we henceforth {drop} the hat-notation} and use the symbol prime to denote differentiation with respect to $r$. {Under these assumptions}, the system {{\eqref{SharpI}}} reads as
\begin{subequations} \label{Linstab}
\begin{alignat}{3}
	\label{lin:stab:1}
	 m_+ \mu'' + m_+\frac {d-1}r \mu' &=
	-S_+
	+ \rho_+ \mu
	 \quad &&\text{in ${\{{0<r<q(t)}\}}$,}
	\\[1ex]	\label{lin:stab:1:bis}
	 m_-\mu'' + m_-\frac {d-1}r \mu' &=
	-S_-
	 + \rho_- \mu
	 \quad &&\text{in ${\{{q(t)<r<R}\}}$,}
	\\[1ex]
	\label{lin:stab:2}
	 \mu & = \alpha \frac{d-1}{q(t)}
	\quad && \text{on ${\{r=q(t)\}}$,}
	\\[1ex]
	\label{lin:stab:4}
	 2\dot{q} & = -\jump{m\mu'}
	+  S_I
	\quad && \text{on ${\{r=q(t)\}}$,}
	\\[1ex]
	\label{lin:stab:5}
	 \mu_{-}'(t, r) & =0
	\quad && \text{on ${\{r = R\}}$.}
\end{alignat}
\end{subequations}
We further complement the above system with the boundary condition at the origin
\begin{align}
	\label{sharp:in:cond}
	\mu_+{(t,r=0})<\infty.
\end{align}

\subsection{Analytical formula for solutions}
To comprehensively address the analysis in both {the} two and three-dimensional settings {in a unified manner}, for every $\ell \in \mathbb{N} \cup \{0\}$, we introduce the notation
\begin{align}\label{order:ell:Bess}
	\Il(r):=
	\begin{cases}
	I_\ell(r) \quad  & \text{if $d=2$},
	\\
	i_\ell(r) \quad  & \text{if $d=3$},
	\end{cases}
	\quad
	\Kl(r):=
	\begin{cases}
	K_\ell(r) \quad  & \text{if $d=2$},
	\\
	k_\ell(r) \quad  & \text{if $d=3$},
	\end{cases}
	\quad r \in \erre,
\end{align}
where $\{I_\ell,K_\ell\}$ and $\{i_\ell,k_\ell\}$ represent the modified Bessel functions of first and second kind to the order $\ell$ and the spherical modified Bessel functions of first and second kind to the order $\ell$, respectively. It is well-known that $\{I_\ell, K_\ell\}$ forms a set of two linearly independent solutions to the modified Bessel differential equation
\begin{align*}
	r^2 y''(r) + r y'(r) - r^2 y(r) = \ell^2 y(r),
	\quad r \in \erre,
\end{align*}
while $\{i_\ell, k_\ell\}$ forms a set of two linearly independent solutions to the modified spherical Bessel differential equation
\begin{align*}
	r^2 y''(r) + r y'(r) - r^2 y(r) = \ell (\ell+1) y(r),
	\quad r \in \erre.
\end{align*}
For a precise definition and further properties of these Bessel functions, we refer to, e.g., \cite{WatsonBessel}.  It is worth pointing out that $\III_\ell$, $\KKK_\ell$ and $\III'_\ell$ are always nonnegative, whereas $\KKK_\ell'$ is nonpositive. 
{For later use, we collect} some properties and identities involving the zeroth and first order modified Bessel functions:
\begin{subequations} \label{Bessel2d3d}
\begin{align}
	I_0(0)& =1,
	\quad
	 I_0'(r)=I_1(r), \quad \lim_{r \to 0^+} K_{0}(r)  = + \infty, \quad K_{0}'(r)  = - K_1(r),  \label{Bessel:2d} \\
	\label{Bessel:3d:1}
	i_0(r) &= \frac {\sinh(r)}{r},	\quad  i_1(r) =i_0'(r) = \frac {r \cosh(r) - \sinh(r)}{r^2},\\
	\label{Bessel:3d:2}
	k_0(r) & = \frac{e^{-r}}{r}, \quad k_1(r) =-k_0'(r) = \frac {e^{-r}(r+1)}{r^2},
\end{align}
\end{subequations}
to be considered for every $r \in \erre$, whenever it occurs.
Introducing the constants
\[
\lambda_{\pm} := \sqrt{\frac{\rho_{\pm}}{m_{\pm}}},
\]
in Appendix \ref{app:radial:soln} we derive the following analytical {expression} for the solution $\mu$ to {\eqref{Linstab}}\ \begin{align}\label{radial:mu:sol}
	\mu(t,r) & = \begin{dcases}\Big(
		\alpha \frac {d-1}{q(t)}  -\frac{S_+}{\rho_+}
		\Big)
		\frac{\Iz \Lpr}{\Iz (\Lp q(t))} + \frac{S_+}{\rho_+} &  \text{ if } {0 \leq r \leq q(t)}, \\[2ex]
		\Big(
		\alpha \frac {d-1}{q(t)} -\frac{S_-}{\rho_-}
		\Big)
		\frac{\Kz\Lmr + \frac {\KKK_1\LmR}{\III_1\LmR}\Iz\Lmr}{\Kz (\Lm q(t)) + \frac {\KKK_1\LmR}{\III_1\LmR}\Iz (\Lm q(t))}+ \frac{S_-}{\rho_-} & \text{ if } q(t) < r < R,
		\end{dcases}
\end{align}
where we note that \eqref{radial:mu:sol} resembles the forms suggested in (C1) and (C2) of \cite{bauermann2023formation}. The ordinary differential equation satisfied by the radius function $q(t)$ is expressed as
\begin{align}
\non        2 \dot{q}
	 & = -m_+ \Lp  \Big(
	\alpha \frac {d-1}q  -\frac{S_+}{\rho_+}
	\Big) \frac{\III_1 \Lpq}{\Iz \Lpq}
	\\ 
 & \quad
	+m_- \Lm  \Big(
	\alpha \frac {d-1}q  - \frac{S_-}{\rho_-}
	\Big) \frac{\III_1\Lmq \frac {\KKK_1\LmR}{\III_1\LmR} -\KKK_1\Lmq }{\Iz\Lmq  \frac {\KKK_1\LmR}{\III_1\LmR}+ \Kz \Lmq}
	+ S_I
	=: {\cal H} (q).
	\label{radial:ode}
\end{align}


\subsection{Radial solutions  in large and infinite domains}
When examining the expression \eqref{radial:mu:sol} for $\mu$, it becomes evident that in the scenario of increasingly large radius $R$, i.e., as $R \to +\infty$, the ratio $\frac {\KKK_1 \LmR}{\III_1 \LmR}$ tends to zero, indicating that the dominant terms in this limit are those related to $\Kz$.

We aim to demonstrate that with appropriate parameter tuning, the ODE \eqref{radial:ode} for $q(t)$ yields at least two distinct solutions corresponding to stationary states. This can be achieved by qualitatively analyzing the behavior of the right-hand side ${\cal H}(q)$. We formulate this in the following lemma.

\begin{lem}
{Let $R>0$ and 
	the parameters of the problem be given. Then there
exists a constants $d_{+,*} < 0$ which does not depend on $\dplus$ and $\dmin$, and a constant $d_{-,*} > 0$  which does not depend on $\dmin$} such that for all 
$\dplus  < d_{+,*}$ and $\dmin  > d_{-,*}$, the function ${\cal H}$ defined in \eqref{radial:ode} has at least two {distinct} roots in $(0,R)$. Furthermore,  {at least one} root is unstable with respect to radial {perturbations}, while {at least one other} root is stable with respect to radial perturbations.
{If ${\cal H}$ has exactly two roots, then the smaller root is unstable {and the larger one is stable}.}
\end{lem}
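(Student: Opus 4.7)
The plan is to view $\mathcal H(q)$ as an affine expression in the parameters $\dplus:=S_+/\rho_+$ and $\dmin:=S_-/\rho_-$. Writing
\[
\mathcal H(q) = F_0(q) + \dplus\, F_+(q) + \dmin\, F_-(q),
\]
one has, on grouping terms in \eqref{radial:ode},
\[
F_+(q) = m_+\lambda_+\, \frac{\III_1(\lambda_+ q)}{\Iz(\lambda_+ q)}, \qquad F_-(q) = -\,m_-\lambda_-\, \frac{\III_1(\lambda_- q)R_K - \KKK_1(\lambda_- q)}{\Iz(\lambda_- q) R_K + \KKK_0(\lambda_- q)},
\]
with $R_K := \KKK_1(\lambda_- R)/\III_1(\lambda_- R)>0$, while $F_0(q)$ collects the curvature contributions $\alpha(d-1)/q$ and the constant $S_I$. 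Using the positivity of $\III_\ell,\KKK_\ell,\III_\ell'$ and the monotonicity of $\III_1$, one checks that $F_+(q)>0$ and $F_-(q)>0$ on $(0,R)$ (the second inequality since $\III_1(\lambda_- q)R_K<\KKK_1(\lambda_- q)$ for $q<R$), a fact to be established rigorously using standard Bessel identities in the spirit of the appendix.

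The next step is to compute the boundary values of $\mathcal H$. At the outer boundary, the identity $\KKK_1(\lambda_- R)=R_K\,\III_1(\lambda_- R)$ forces the numerator of the $\mathbb K$-fraction to vanish at $q=R$, giving
\[
\mathcal H(R^-) = -m_+\lambda_+\!\left(\frac{\alpha(d-1)}{R}-\dplus\right)\!\frac{\III_1(\lambda_+ R)}{\Iz(\lambda_+ R)} + S_I,
\]
which depends on $\dplus$ but not on $\dmin$; hence $\mathcal H(R^-)<0$ whenever $\dplus<d_{+,*}$ for some threshold $d_{+,*}<0$ depending only on $R,\alpha,m_\pm,\rho_\pm,S_I,d$. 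At $q\to 0^+$ I would use the near-zero asymptotics in \eqref{Bessel2d3d}: in $d=2$, $K_0(r)\sim-\ln r$ and $K_1(r)\sim 1/r$, so $-m_-\lambda_-\alpha/q\cdot P(q)\to-\infty$; in $d=3$, $k_0(r)\sim 1/r$ and $k_1(r)\sim 1/r^2$, so the $\infty-\infty$ grouping $m_-\lambda_- P(q)[\alpha(d-1)/q-\dmin]\sim -2m_-\alpha/q^2$ again diverges to $-\infty$. Combined with the boundedness of the $\Omega^+$-contribution at $q=0$, this yields $\mathcal H(0^+)=-\infty$.

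Having fixed $\dplus<d_{+,*}$, I would choose an intermediate radius, for instance $q_a = R/2$, where $F_-(q_a)>0$ and $F_+(q_a)>0$ are finite and independent of $\dmin$. Then the scalar condition
\[
\mathcal H(q_a) = F_0(q_a) + \dplus F_+(q_a) + \dmin F_-(q_a) > 0
\]
is satisfied as soon as $\dmin$ exceeds a threshold $d_{-,*}$ depending on $\dplus$ and the fixed parameters but not on $\dmin$ itself. Continuity of $\mathcal H$ on $(0,R)$, together with $\mathcal H(0^+)=-\infty$, $\mathcal H(q_a)>0$, $\mathcal H(R^-)<0$, then produces by the intermediate value theorem at least one root $q_1\in(0,q_a)$ and at least one root $q_2\in(q_a,R)$.

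For the stability part, linearising the ODE $2\dot q = \mathcal H(q)$ around a stationary $q^\ast$ gives growth rate $\tfrac12\mathcal H'(q^\ast)$, so $q^\ast$ is radially stable iff $\mathcal H'(q^\ast)<0$. The sign pattern from the IVT argument ($\mathcal H$ passes from $-$ to $+$ at the smallest root and from $+$ to $-$ at the largest) yields $\mathcal H'(q_1)\ge 0$ and $\mathcal H'(q_{\text{last}})\le 0$; in the generic (transversal) case these are strict, which together with a density argument covers the degenerate case or allows perturbing the parameters. When $\mathcal H$ has exactly two roots, the sign chart is unambiguous and the smaller is unstable, the larger stable. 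The main obstacle is the careful asymptotic analysis at $q\to 0^+$, particularly in three dimensions where two singular contributions partially cancel; this is what dictates the somewhat subtle threshold choices and the need for the auxiliary Bessel identities that I would defer to the appendix.
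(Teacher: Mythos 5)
Your proposal is correct and follows essentially the same route as the paper: both arguments hinge on $\mathcal H(0^+)=-\infty$, on $\mathcal H(R)$ depending only on $\dplus$ (so that $\dplus<d_{+,*}$ forces $\mathcal H(R)<0$), on the sign of the $\Omega^-$-fraction being negative on $(0,R)$ (so that $\dmin$ large forces $\mathcal H>0$ at some interior point), and on the intermediate value theorem plus the resulting sign pattern of $\mathcal H$ to deduce instability of the smallest root and stability of the largest. Your affine decomposition of $\mathcal H$ in $(\dplus,\dmin)$ and the explicit $d=3$ asymptotics at $q\to 0^+$ are slightly more detailed presentations of the same computations, and the detour through $\mathcal H'$ and a ``density argument'' for stability is unnecessary — for the scalar autonomous ODE $2\dot q=\mathcal H(q)$, stability is read off directly from the sign change of $\mathcal H$, which is exactly how the paper concludes.
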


\begin{proof}
{We begin by recalling several asymptotic} properties of the modified Bessel functions, see, e.g., {Paragraphs 10.25, 10.30 and 10.52 in \cite{NIST:DLMF}},
\begin{equation}\label{Bessel:prop}
\begin{alignedat}{2}
	\lim_{s \to 0^+ }\frac{\III_1(s)}{\Iz(s)} & = 0,
	\qquad
	&&\lim_{s \to +\infty }\frac{\III_1(s)}{\Iz(s)} = 1,
	\\
	\lim_{s \to 0^+ }\frac{\KKK_1(s)}{\Kz(s)} & = +\infty,
	\qquad
	&& \lim_{s \to +\infty }\frac{\KKK_1(s)}{\Kz(s)} = 1,\qquad 
	\lim_{s \to +\infty}\frac {\KKK_1 (s)}{\III_1 (s)}=0.
\end{alignedat}
\end{equation}
Utilizing the aforementioned properties, one readily {infers} from the definition of ${\cal H}(q)$ that
 \begin{align*}
	\lim_{q\to 0^+} {\cal H}(q)=-\infty,
	\quad
	{\cal H}(R) =	-m_+ \Lp  \Big(
	\alpha \frac {d-1}R  - \frac{S_+}{\rho_+}
	\Big) \frac{\III_1 \LpR}{\Iz \LpR}+S_I.
\end{align*}
{Hence, on choosing $d_{+,*} < 0$ sufficiently
large in modulus, we can guarantee that ${\cal H}(R) < 0$ for
$\frac{S_+}{\rho_+} < d_{+,*}$.} It suffices to show that ${\cal H}$ attains positive values in $(0,R)$, from which we deduce that ${\cal H}$ has at least two roots in $(0,R)$. We can now use the fact that $ \frac {\KKK_1}{\III_1} $ and $ \frac {\KKK_0}{\III_0} $ are positive and {monotonically} decreasing
to observe that in  \eqref{radial:ode} the term
\[
\frac{\III_1\Lmq \frac {\KKK_1\LmR}{\III_1\LmR} -\KKK_1\Lmq }{\Iz\Lmq  \frac {\KKK_1\LmR}{\III_1\LmR}+ \Kz \Lmq} = \frac{\III_1\Lmq}{\III_0\Lmq} \left(\frac{\frac{\KKK_1\LmR}{\III_1\LmR} - \frac{\KKK_1\Lmq}{\III_1\Lmq}}{\frac{\KKK_1\LmR}{\III_1\LmR} + \frac{\KKK_0\Lmq}{\III_0\Lmq}}
\right)\]
is negative {for $q\in(0,R)$}. This implies that for $\dmin $ sufficiently large, ${\cal H}$ attains positive values inside the interval
$(0,R)$. 
{As ${\cal H}$ is analytic this guarantees that ${\cal H}$ has two zeros $q_1$ and $ q_2$ with $0 < q_1 < q_2 < R$ and with ${\cal H}$ changing sign from minus to plus at $q_1$, and vice versa at $q_2$.
Hence it follows from \eqref{radial:ode}}  that $q_1$ is unstable and $q_2$ is stable with respect to radial perturbations.
\end{proof}

We now consider the case of an infinite domain. From the fact that $\lim_{R \to \infty} \frac {\KKK_1 \LmR}{\III_1 \LmR} = 0$, the solution $\mu$ to {\eqref{Linstab}}\ on $(0,\infty)$ is given as
\begin{align}\label{radial:mu:sol:infty}
	\mu(t,r) & = \begin{dcases}\Big(
		\alpha \frac {d-1}{q(t)}  -\frac{S_+}{\rho_+}
		\Big)
		\frac{\Iz \Lpr}{\Iz(\Lp q(t))} + \frac{S_+}{\rho_+} &  \text{ if } {0\leq r \leq q(t)}, \\[2ex]
		\Big(
		\alpha \frac {d-1}{q(t)} -\frac{S_-}{\rho_-}
		\Big)
		\frac{\Kz\Lmr}{\Kz (\Lm q(t))}+ \frac{S_-}{\rho_-} & \text{ if } q(t) < r,
			\end{dcases}
\end{align}
while the ordinary differential equation for $q(t)$ now reads as
\begin{align}
\notag  2 \dot{q} & =
	-m_+ \Lp  \Big(
		\alpha \frac {d-1}q  - \frac{S_+}{\rho_+}
		\Big)
		 \frac{\III_1 \Lpq}{\Iz \Lpq}
		\\
		&\quad - m_-  \Lm  \Big(
		\alpha \frac {d-1}q  - \frac{S_-}{\rho_-}
		\Big) \frac{ \KKK_1\Lmq }{  \Kz \Lmq}
		+ S_I=:{\cal H}_\infty(q).
		\label{ode:combined:Rinf}
\end{align}
Let us remark that $\frac{1}{q} \frac{\III_1\Lpq}{\III_0\Lpq}$ remains nonnegative and bounded for $q \in (0,\infty)$, while $\frac{1}{q} \frac{\KKK_1\Lmq}{\KKK_0\Lmq}$ is positive over $(0,\infty)$ and satisfies 
\[
\lim_{q \to 0^+} \frac{1}{q} \frac{\KKK_1\Lmq}{\KKK_0\Lmq} =+ \infty, \quad \lim_{q \to +\infty} \frac{1}{q} \frac{\KKK_1\Lmq}{\KKK_0\Lmq} = 0.
\]
Hence, in the limit $q \to 0^+$, the second term in \eqref{ode:combined:Rinf} is predominant, leading to 
\begin{align*}
	\lim_{q \to 0^+} {\cal H}_\infty(q)=-\infty.
\end{align*}
On the other hand, as 
\[
\lim_{q \to +\infty} \frac{\III_1\Lpq}{\III_0\Lpq} = 1, \quad \lim_{q \to +\infty}\frac{1}{q} \frac{\III_1\Lpq}{\III_0\Lpq} = 0,
\]
we observe that
\begin{align*}
	\lim_{q \to +\infty} {\cal H}_\infty(q)
	= \underbrace{\frac{m_+  \Lp S_+}{\rho_+}}_{<0} + \underbrace{\frac{m_-  \Lm S_-}{\rho_-}}_{>0} + S_I.
\end{align*}
Consequently, the right-hand side can be made negative {by choosing $\dplus$ {large in modulus}}. On the other hand, one can choose the parameter $\dmin $ large, to achieve that ${\cal H}_\infty$ is positive for
certain bounded values of $q$, {recall \eqref{ode:combined:Rinf}}. As $ {\cal H}_\infty$ is continuous, we hence {guarantee} also in the case of infinite domains that  $ {\cal H}_\infty$ has at least two zeros. Similarly {to before, for a finite number of roots, the smallest} root is unstable, {while the largest sign-changing} root is stable with respect to radial perturbations.

\subsection{Linear stability of radial solutions}
\label{SEC:RAD:STAB}
Let us now examine the linear stability of a radially symmetric solution {about the} stationary state $\qstar$, which is {a} stable root of the function $\mathcal{H}(q)$ derived in \eqref{radial:ode}.
{To this end}, we consider a perturbed radius of the form $w = \qstar + \badeps \rad $, $\badeps \in (0,1)$. Here,  $\rad=\rad(t,\th,\phi)$,  where  $\th$ indicate the polar angle and $\phi$ the azimuthal angle, respectively. In the two dimensional setting,  $\rad=\rad(\th)$ depends on $\th$ only. Besides, we introduce $X(\th,\phi)$ as the standard  parametrization of the unit sphere using
the polar and azimuthal angles,
and indicate the perturbed domain by
\begin{align*}
	\Omega_{\rad,\badeps}^+ & := \{
	x \in \Omega\,\, : \,\,  x = |x| X(\th,\phi), \,
	|x|  < \qstar + \badeps \rad(t,\th,\phi)
	\},
	\\
	\Omega_{\rad,\badeps}^- & := \{
	x \in \Omega \,\, : \,\,  x = |x| X(\th,\phi), \,
	\qstar + \badeps \rad(t,\th,\phi) < |x| <R
	\}.
\end{align*}
We then express the solutions as $\mu_\pm(r,t,\th,\phi) = \mu_\pm^\star(t,r) + \badeps u_{\pm}(r,t,\th,\phi)${, where  $\mu_{\pm}^\star$ are the solutions to \eqref{SharpI} corresponding to the stationary state $\qstar$,}
and demand that those solve the free boundary problem {\eqref{SharpI}}\ on the perturbed domains:
\begin{subequations}
\begin{alignat}{2}
	\label{lin:1}
		- m_+ \Delta (\mustpl+ \badeps u_{+}) &= S_+ - \rho_+ (\mustpl+ \badeps u_{+}) \qquad &&\text{in $\Omega_{\rad,\badeps}^+$,}
	\\
		\label{lin:2}
		- m_- \Delta (\mu_-^\star+ \badeps u_{-}) &= S_- - \rho_- (\mu_-^\star+ \badeps u_{-}) \qquad &&\text{in $\Omega_{\rad,\badeps}^-$,}
	\\
	\label{lin:3}
	 \mu_\pm^\star+ \badeps u_{\pm} &=  \alpha \kappa
	\qquad && \text{on $\{r=w\}$,}
	\\
	\label{lin:5}
	 - 2 {\cal V} & = \jump{m \nabla (\mu^\star + \badeps u)} \cdot {\bnu}
	+ S_I
	\qquad && \text{on $\{r=w\}$,}
	\\
	\label{lin:6}
	 (\mu_-^\star + \badeps u_{-})_-'(R) & =0
	\qquad && \text{on $\{r=R\}$,}
\end{alignat}
\end{subequations}
where $\kappa$, $\mathcal{V}$ and $\bm{\nu}$ denote the mean curvature, normal velocity and unit normal of the perturbed interface $\{ r = w \}$, respectively. In the above{,} we introduce indices $\pm$ to represent the solution over $\Omega_{Y,\badeps}^{\pm}$, respectively. Linearizing the above equations about the original interface $\{r = \qstar\}$, while using the {expansions}
\begin{align*}
	\mu_\pm^\star(w) =
	\mu_\pm^\star(\qstar)
	+ (\mu_\pm^\star)'|_{r=\qstar} (w-\qstar) + \text{ h.o.t.}=
	\mu_\pm^\star(\qstar)
	+ \badeps (\mu_\pm^\star)'|_{r=\qstar} Y +\text{ h.o.t.},
\end{align*}
where h.o.t.~denotes higher order terms, we obtain the following system for $\rad$ and $u_{\pm}$:
\begin{subequations} \label{pertrad:general}
\begin{alignat}{2}
	\label{pertrad:general:1}
	&
	- m_+ \Delta  u_+=  - \rho_+ u_+ \qquad &&\text{in $\{{0 <r <\qstar}\}$,}
	\\
		\label{pertrad:general:2}
	&
	- m_- \Delta u_-= - \rho_- u_- \qquad &&\text{in $\{{ \qstar<r <R}\}$,}
	\\
	\label{pertrad:general:3}
	& (\mu^\star_\pm)' Y + u_{\pm}= -  \alpha 	\frac {d-1}{(\qstar)^2} \big( \rad + \frac 1{d-1} \Delta_{{\cal S}^{d-1}} \rad \big)
	\qquad && \text{on $\{r=\qstar\}$,}
	\\
	&  2 \dot{Y} =
	-m_+(\mu^\star_+)'' \rad
	+m_-(\mu^\star_-)'' \rad
	\label{pertrad:general:5}
	- m_+ (u_+)_r
	+ m_- (u_-)_r
	\qquad && \text{on $\{r=\qstar\}$,}
	\\
	\label{pertrad:general:6}
	& {(u_-)_r =0}
	\qquad &&
	\text{on $\{r=R\}$,}
\end{alignat}
\end{subequations}
where $\Delta_{{\cal S}^{d-1}} $ indicates the Laplace--Beltrami operator on the  $d-1$ dimensional unit sphere ${\cal S}^{d-1}$.
Here, we use  the linearization ${\tfrac {d-1}{(\qstar)^2}} \big( \rad + {\tfrac 1{d-1}} \Delta_{{\cal S}^{d-1}} \rad \big)$ of the mean curvature operator around a sphere, which has been computed for
example in Lemma 3.1 of \cite{ESCHER1998267} (that defines the mean curvature with an additional factor $\tfrac 1{d-1}$) or can be computed with the help of Lemma 39 in \cite{bgnreview}.
We further complement the above system with the additional boundary condition at the origin
\begin{align}
	\label{pertrad:general:initial}
	u_+(r=0)<\infty.
\end{align}
We want to solve the linear evolution equation on a surface given by \eqref{pertrad:general:5} possessing the form $ 2 \dot{Y} = {\cal G}(\rad)$ with ${\cal G}(\rad)$ being defined by solving \eqref{pertrad:general:1}--\eqref{pertrad:general:3}, \eqref{pertrad:general:6}, and \eqref{pertrad:general:initial}. We proceed with the ansatz
\[
\rad = \delta (t) Z(\th,\phi),
\]
where for $\ell \in \enne \cup \{ 0 \}$ and $ k \in \{-\ell , ..., \ell \}$, we set
\begin{alignat}{2}	
Z(\th,\phi)& =
	\cos(\ell \th),
	\; \text{ or }\;
	Z(\th,\phi) =
	\sin(\ell \th),
	 \quad &&\text{if $d=2$,} \label{perturb2d}
	\\
	Z(\th,\phi)& =
	Y_{\ell,k}(\th,\phi), \quad &&\text{if $d=3$.}	
	\label{perturb3d}
\end{alignat}
Here, the functions $Y_{\ell,k}(\th,\phi)$ denote  spherical harmonics, where $\ell$ and $\th$ indicate the polar wavenumber and angle, while $k$ and $\phi$ denote the azimuthal wavenumber and angle, respectively.
Inspired by the above ansatz, we consider $u_\pm$ taking the following form
\begin{align*}
	 u_\pm (r,t, \th,\phi)= \delta (t) Z(\th,\phi)U_\pm (r),
\end{align*}
where $U_\pm$ fulfill a similar system that we will present below. We recall that
the Laplace--Beltrami operator $\Delta_{{\cal S}^{d-1}}$ on the $(d-1)$-dimensional unit sphere $\mathcal{S}^{d-1}$ is defined as
\begin{align*}
	\Delta_{{\cal S}^{d-1}} = \begin{cases}
	\pd^2_\th, \quad &\text{if $d=2$},
	\\
	\pd^2_\th + \cot(\th) \pd_\th + \frac 1{\sin^2(\th)} \pd^2_\phi, \quad & \text{if $d=3$},
	\end{cases}
\end{align*}
and for a general function $f$ it holds that
\begin{align}
	\label{Delta:rad}	
	\Delta f = f'' + \frac {d-1}r f' + \frac 1{r^2} \Delta_{{\cal S}^{d-1}} f,
\end{align}
following the same convention that the radial derivative is denoted by a prime.
For the above choices of $Z(\th,\phi)$ we infer
\begin{align}	\label{lap}
	\Delta_{{\cal S}^{d-1}}Z(\th,\phi)
	=\zeta_{\ell,d}Z(\th,\phi) ,
	\quad \text{with} \quad
	\zeta_{\ell,d} = \begin{cases}
	-\ell^2, \quad &\text{in $d=2$},
	\\
	-\ell(\ell+1), \quad & \text{in $d=3$}.
	\end{cases}
\end{align}
Then, the system {\eqref{pertrad:general} together with
\eqref{pertrad:general:initial}} simplifies to
\begin{subequations}\label{linstab}
\begin{alignat}{2}
	\label{linstab:1}
	&
	- m_+ \Big( U_{+}'' + \frac {d-1}r U_{+}' + \frac {\zeta_{\ell,d}}{r^2}  U_{+} \Big)=  - \rho_+  U_{+} \quad &&\text{in $\{{0<r<\qstar}\}$,}
	\\
		\label{linstab:2}
	&
	- m_- \Big( U_{-}''  + \frac {d-1}r U_{-}' + \frac {\zeta_{\ell,d}}{r^2} \ U_{-} \Big)=  - \rho_-  U_{-} \quad &&\text{in $\{{\qstar<r <R}\}$,}
	\\
	\label{linstab:3}
	& (\mu^\star_+)'
	+ U_+ = (\mu^\star_-)' + U_- = -\alpha \frac {d-1}{r^2} \Big( 1 + \frac {\zeta_{\ell,d}}{d-1}\Big)
	\quad && \text{on $\{r=\qstar\}$,}
	\\
	\label{linstab:4}
	& 2 \dot{\delta}
	=
	-\big(m_+(\mu^\star_+)''
	- m_-(\mu^\star_-)'' \big)\delta
	-   \big(m_+U_+' -m_-U_-'\big)\delta
	\quad && \text{on $\{r=\qstar\}$,}
	\\
	\label{linstab:5}
	& U_-' =0
	\quad && \text{on $\{r=R\}$,} 
\end{alignat}
\end{subequations}
and
\begin{align}
 \label{linstab:6} & U_+(r = 0) < \infty.
\end{align}
As a consequence of equation \eqref{linstab:3}, we also have the following relation
\begin{align}
	\label{linstab:extra}
	& U_- - U_+ =(\mu_+^\star)'
	- (\mu_-^\star)'
	\quad  \text{on $\{r=\qstar\}$.}
\end{align}

\subsubsection{Solutions to the perturbed system on finite domains}
In Appendix \ref{app:radial:perturb} we detail the derivation of the following analytical {formulas} for $U_{+}$ and $U_{-}$. For $r \in (0,\qstar)$:
\begin{align*}
	U_+ (r) & =
	\left (- \alpha \frac {d-1}{(\qstar)^2} \Big( 1+ \frac {\zeta_{\ell,d}}{d-1}\Big)
	- \Lp   \Big( \alpha \frac {d-1}\qstar - \frac{S_+}{\rho_+} \Big)  \frac {\III_1\Lpqs}{\Iz \Lpqs}  \right ) \frac{\III_\ell(\Lp  r)}{\Il \Lpqs}
\end{align*}
and for $r \in (\qstar,R)$:
\begin{align*}
U_- (r)  & =
	\frac{\left ( \KKK_\ell(\Lm r) - \frac{\Kl' \LmR }{\Il' \LmR}  \III_\ell(\Lm r) \right )}{ \left (\Kl \Lmqs- \frac{\Kl' \LmR }{\Il' \LmR } \Il \Lmqs \right)} \\
	& \quad \times 	\left (- \alpha \frac {d-1}{(\qstar)^2} \Big( 1+ \frac {\zeta_{\ell,d}}{d-1}\Big)
	+ \Lm \Big(\alpha \frac {d-1}q-  \frac{S_-}{\rho_-} \Big) 
	\frac {\KKK_1\Lmqs -  \frac {\KKK_1 \LmR}{\III_1 \LmR} \III_1\Lmqs }{ \Kz \Lmqs +  \frac {\KKK_1 \LmR}{\III_1 \LmR}\Iz \Lmqs }
	 \right ).
\end{align*}
These expressions in turn {provide} the ordinary differential equation for the perturbation magnitude $\delta(t)$:
\begin{align}
\notag  \frac{2\dot{\delta}}{\delta}   & =	-m_+ \Lp^2  \Big( \alpha \frac {d-1}{\qstar} - \frac{S_+}{\rho_+} \Big)  \frac{1}{\Iz \Lpqs} \Big ( \III_1' \Lpqs - \III_1 \Lpqs \frac{\Il' \Lpqs}{\Il \Lpqs} \Big ) \\
\notag & \quad  + m_+ \Lp \alpha \frac {d-1}{(\qstar)^2} \Big( 1+ \frac {\zeta_{\ell,d}}{d-1}\Big)
\frac{\Il' \Lpqs}{\Il \Lpqs}\\ 
\label{radial:ode:perturb} & \quad - m_-\Lm^2 \Big (\alpha \frac {d-1}{\qstar}-  \frac{S_-}{\rho_-} \Big)
	\frac { \KKK_1' \Lmqs - \frac{\KKK_1 \LmR}{\III_1 \LmR} \III_1' \Lmqs}{ \Kz \Lmqs + \frac {\KKK_1 \LmR}{\III_1 \LmR} \Iz \Lmqs }\\ 
\notag & \quad	+ m_-\Lm  \frac{\Kl' \Lmqs - \frac{\Kl'\LmR}{\Il' \LmR} \Il' \Lmqs}{\Kl \Lmqs-  \frac{\Kl' \LmR }{\Il' \LmR }) \Il \Lmqs} \\
\notag & \qquad \times \left (- \alpha \frac {d-1}{(\qstar)^2} \Big( 1+ \frac {\zeta_{\ell,d}}{d-1}\Big)
	+ \Lm \Big(\alpha \frac {d-1}{\qstar}- \frac{S_-}{\rho_-} \Big)
	\frac {\KKK_1\Lmqs -  \frac {\KKK_1 \LmR}{\III_1 \LmR} \III_1\Lmqs}{ \Kz \Lmqs + \frac {\KKK_1 \LmR}{\III_1 \LmR} \Iz \Lmqs} \right ).
\end{align}
We can then express \eqref{radial:ode:perturb} as
\begin{align}
\frac{2\dot{\delta}}{\delta} &
	=\Big(\alpha \frac {d-1}{(\qstar)^2} \Big( 1+ \frac {\zeta_{\ell,d}}{d-1}\Big)\Big)F_1(\qstar)
	  + \Big(
	  \alpha \frac {d-1}{\qstar}  -\frac{S_+}{\rho_+}
	  \Big) F_2(\qstar)
   +\Big(
	  \alpha \frac {d-1}{\qstar}  -\frac{S_-}{\rho_-}
	  \Big)  F_3(\qstar), \label{strucperturb}
\end{align}	
with {the} natural definition of $F_1$, $F_2$ and $F_3$ {as} functions of $\qstar$ that are independent of $\alpha$, $S_+$ and $S_-$.

\begin{remark}
	We mention that the term $\frac {\zeta_{\ell,d}}{d-1}$ takes very large negative values for $\ell$ large and
	in such cases will  dominate in \eqref{strucperturb}. This {leads to the} decay of  perturbations {correspond{ing}} to mode $\ell$. The question now is under what parameter values
	will the right hand side be positive for certain $\ell$.
This would than lead
	to the situation that a perturbation $Z$ as in \eqref{perturb2d} and \eqref{perturb3d} will increase.
	This yields a
	deformation of the sphere with a number of oscillations  depending on $\ell$ that grows in time.
	Heuristically, taking small values of $\alpha$ {will} lead to more unstable situations.
	\end{remark}

\subsubsection{Solutions to the perturbed system on infinite domains}
Let us explore the behaviour in the limit $R \to \infty$. We recall the properties 
\begin{align*}
\lim_{R \to \infty} \frac{\KKK_1 \LmR}{\III_1 \LmR} = 0, \quad \lim_{R \to \infty} \frac{\KKK_{\ell}' \LmR}{\III_{\ell}' \LmR}= 0,
\end{align*}
so that in the limit $R \to \infty$, the ordinary differential equation \eqref{radial:ode:perturb} simplifies to 
\begin{equation}\label{radial:ode:infty:perturb}
\begin{aligned}
	\frac{2 \dot{\delta}}{\delta}
	&=
	m_+  \Lp    \alpha  \frac {d-1}{(\qstar)^2} \Big( 1+ \frac{\zeta_{\ell,d}}{d-1}\Big) \frac{\Il' \Lpqs}{\Il \Lpqs}
	- m_-  \Lm   \alpha  \frac {d-1}{(\qstar)^2}
	  \Big( 1+ \frac{\zeta_{\ell,d}}{d-1}\Big) \frac{\Kl' \Lmqs}{\Kl \Lmqs}
	\\ & \quad
	- m_+  \Lp^2  \Big(  \alpha \frac {d-1}{\qstar} - \frac{S_+}{\rho_+}\Big) \frac1 {\Iz \Lpqs} \Big( { \III_1'\Lpqs - \III_1 \Lpqs  \frac{\Il' \Lpqs}{\Il \Lpqs} }\Big)
	\\ & \quad
	- m_- \Lm^2  \Big(\alpha \frac {d-1}{\qstar}- \frac{S_-}{\rho_-} \Big)\frac1 {\Kz \Lmqs} \Big( { \KKK_1'\Lmqs - \KKK_1 \Lmqs  \frac{\Kl' \Lmqs}{\Kl \Lmqs} }\Big).
\end{aligned}
\end{equation}
When $\ell = 1$, we recall that $\frac{\zeta_{1,d}}{d-1} = -1$ for $d = 2, 3$, and so the above expression simplifies to
\begin{align*}
\frac{2 \dot{\delta}}{\delta}
	& =
	m_+  \Lp    \alpha  \frac {d-1}{(\qstar)^2} \Big( 1+ \frac{\zeta_{1,d}}{d-1}\Big) \frac{\III_1' \Lpqs}{\III_1 \Lpqs}
	- m_-  \Lm   \alpha  \frac {d-1}{(\qstar)^2}
	  \Big( 1+ \frac{\zeta_{1,d}}{d-1}\Big) \frac{\KKK_1' \Lmqs}{\KKK_1 \Lmqs} \\
	  & = \alpha \frac{d-1}{(\qstar)^2} \Big ( 1 + \frac{\zeta_{1,d}}{d-1} \Big ) \Big [ m_+ \lambda_+ \frac{\III_1' \Lpqs}{\III_1 \Lpqs} - m_- \lambda_- \frac{\KKK_1' \Lmqs}{\KKK_1 \Lmqs} \Big ] = 0.
\end{align*}
This is due to the fact that these perturbations corresponding to $\ell = 1$ relate to translations of the sphere which{,} in an infinite domain{,} do not lead to any instability as the translated sphere is stationary as well. For $\ell \geq 2$ we see that
\[
\alpha \frac{d-1}{(\qstar)^2} \Big ( \underbrace{1 + \frac{\zeta_{\ell,d}}{d-1}}_{<0} \Big ) \Big [ \underbrace{m_+ \lambda_+ \frac{\III_\ell' \Lpqs}{\III_\ell \Lpqs} - m_- \lambda_- \frac{\KKK_\ell' \Lmqs}{\KKK_\ell \Lmqs} }_{>0} \Big ] < 0,
\] 
while recalling $S_+ < 0$ and using the property $\III_\ell \III_1' - \III_1 \III_\ell' < 0$ from Corollary \ref{cor:Bessel}{,} we have
\[
- m_+  \Lp^2  \Big(  \underbrace{\alpha \frac {d-1}{\qstar} - \frac{S_+}{\rho_+}}_{>0} \Big) \frac1 {\Iz \Lpqs} \Big( \underbrace{ \III_1'\Lpqs - \III_1 \Lpqs  \frac{\Il' \Lpqs}{\Il \Lpqs} }_{<0} \Big) > 0.
\]
Furthermore, for $\alpha$ sufficiently small and $S_-$ sufficient large, using the property $\KKK_\ell \KKK_1' - \KKK_1 \KKK_{\ell}' > 0$ from Corollary \ref{cor:Bessel}, it holds that
\[
- m_- \Lm^2  \Big( \underbrace{\alpha \frac {d-1}{\qstar}- \frac{S_-}{\rho_-}}_{<0} \Big)\frac1 {\Kz \Lmqs} \Big( \underbrace{ \KKK_1'\Lmqs - \KKK_1 \Lmqs  \frac{\Kl' \Lmqs}{\Kl \Lmqs} }_{>0} \Big) > 0.
\]
Hence, for $S_+ < 0$, sufficiently small $\alpha > 0$ and sufficiently large $S_- > 0$, the right-hand side of \eqref{radial:ode:infty:perturb} can be made positive, leading to the enhancement of perturbations.

\section{Multilayered type solutions}
We now turn to discussing the multilayered  situation, first considering the flat case and then the radial case.

\subsection{Flat multilayered {solutions}}
For $\mathcal{L}, \widetilde{\mathcal{L}} > 0$, we consider the special domain
\[
\Omega = (0, {\cal L}) \times (0, \Lpanarv)^{d-1},
\]
and seek planar solutions under the geometry 
\begin{align*}
	\Omega_-^{\rm in}(t)& =(0,q_1(t)) \times (0, \Lpanarv)^{d-1}, \\
	\Omega_+(t) & =(q_1(t),q_2(t)) \times (0, \Lpanarv)^{d-1}, 
	\\ 
	\Omega_-^{\rm ext}(t)& =(q_2(t),\Lpanarh) \times (0, \Lpanarv)^{d-1},
\end{align*}
with $q_1(t) < q_2(t)$ encoding the location of the moving interfaces with normal velocities and normals given by ${\cal V}_1 = \dot{q}_1$, ${\cal V}_2 =  - \dot{q}_2$, $\bm{\nu}_1 = (1, \mathbf{0})^{\top}$ and $\bm{\nu}_2 = (-1, \mathbf{0})^{\top}$, respectively. In particular, the unit normals point towards $\Omega_+$ from $\Omega_-^{\rm in}$ and $\Omega_-^{\rm ext}$, see Figure \ref{fig:planar} for a sketch.

\begin{figure}[h]
\centering
\begin{tikzpicture}[scale=1.2]
  \draw[thick] (0,0) rectangle (8,4);

  \draw[thick] (2.5,0) -- (2.5,4);
  \draw[thick] (5.5,0) -- (5.5,4);
  
  \filldraw[fill=black!25, thick, line join=bevel]
 	(2.5,0) rectangle (5.5,4);

  \node at (1.25,2) {$\Omega^{\textrm{in}}_-(t)$};
  \node at (4,2) {$\Omega_+(t)$};
  \node at (7,2) {$\Omega^{\textrm{ext}}_-(t)$};

  \node[rotate=90] at (8.3,3.5) {$\partial \Omega$};

  \draw[->] (2.5,1.5) -- (3.3,1.5) node[midway, below] {$\boldsymbol{\nu}_1(t)$};
  \draw[->] (5.5,1.5) -- (4.8,1.5) node[midway, below] {$\boldsymbol{\nu}_2(t)$};
  \draw[->] (8,1.5) -- (8.8,1.5) node[midway, below] {$\boldsymbol{n}$};

  \node at (2.5,-0.5) {$q_1(t)$};
  \node at (5.5,-0.5) {$q_2(t)$};
\end{tikzpicture}
\caption{The geometric setting for the flat multilayered solutions. The unit normals point into {$\Omega_+(t)$}.}
\label{fig:planar}
\end{figure}
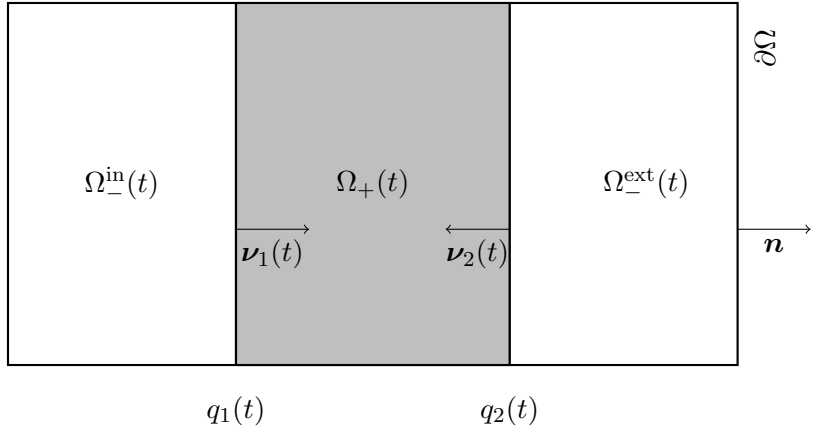

For $x \in \RRR^d$, we write $x = (z, \widehat{x})$ with $z \in \RRR$ and $\widehat{x} \in \RRR^{d-1}$. We look for planar solutions $\mu_-^{\rm in}$, $\mu_+$ and $\mu_-^{\rm ext}$, that are just function of time $t$ and $z$, solving 
\begin{subequations}\label{Flatmultil}
\begin{alignat}{2}
	-m_- (\mumin)'' &=S_-  -\rho_-\mumin \qquad && \text{for } z\in (0,q_1(t)),
	\\
	-m_+ \mu''_+ &=S_+  -\rho_+ \mu_+ \qquad && \text{for } z\in (q_1(t),q_2(t)) ,
	\\ 
	-m_- (\mumex)'' &=S_-  -\rho_- \mumex \qquad && \text{for } z\in (q_2(t),\Lpanarh),
	\\
	\mumin &  =0=\mu_+ &&\text{for } \{z=q_1(t) \},
	\\
	\mu_+ & =0 =\mumex &&\text{for } \{z=q_2(t) \},
	\\
	-2 {\dot q_1} &=  \jump{m \mu'} +S_I \qquad && \text{for } \{z=q_1(t) \},
	\\
	2 {\dot q_2} &= - \jump{m \mu'} +S_I \qquad && \text{for } \{z=q_2(t) \}, 
	\end{alignat}
\end{subequations}
	along with the Neumann boundary conditions
	\[
		(\mumin)^\prime (t,0)  =0, \quad 
	(\mumex)^\prime (t,\Lpanarh) =0.
	\]
The sign differences in the ODEs for $q_1 $ and $q_2$ arise from the differing orientations of the normals, and we recall that the mean curvature $\kappa$ of a planar interface is zero.

\subsubsection{Analytical formula for solutions}
Following similar calculations as in \cite{OOL} we arrive at the solution formula
\begin{subequations} \label{mum:thin:lay}
\begin{alignat}{2}
	\label{mumin:thin:lay}
\mumin(t,z) &= \dmin  \left(1 - \frac{\cosh(\Lm z\big)}{\cosh(\Lm q_1(t)\big)} \right),
\quad && z \in (0, q_1(t)),
\\
	\label{mumpl:thin:lay}
\mu_+(t,z) &= \dplus  \left(1 - \frac{
	\cosh\left( \Lp \left( \tfrac{q_1(t)+q_2(t) }{2} - z\right)
	\right)
}{
	\cosh\left( \Lp \tfrac{q_2(t) - q_1(t)}{2} \right)
} \right),
\quad && z \in (q_1(t), q_2(t)),
\\
	\label{mumext:thin:lay}
	\mumex(t,z) &= \dmin  \left(1 - \frac{\cosh(\Lm (\Lpanarh - z))}{\cosh(\Lm (\Lpanarh- q_2(t)))} \right),
\quad && z \in (q_2(t), \Lpanarh),
\end{alignat}
\end{subequations}
along with the following evolution equations for the interface locations $q_1$ and $q_2$:
\begin{subequations} \label{q12dot:planar}
\begin{align} 
\label{q1dot:planar}  	 \dot{q}_1 &= \frac{1}{2}(- m_+ \mu_+'(q_1) 
+m_- (\mumin)'(q_1)  - S_I)
\\
\notag &= \frac{1}{2} (-\dplus m_+  \Lp \tanh\left( \Lp \tfrac{q_2 - q_1}{2} \right)
- \dmin   m_- \Lm \tanh\big( \Lm q_1 \big)
- S_I ) =: {\cal H}^{\rm p}_1\qud,
\\
\label{q2dot:planar}  	 \dot{q}_2 &= \frac{1}{2}(- m_+ \mu_+'(q_2) + m_- (\mumex)'(q_2)  + S_I)
\\
\notag &= \frac{1}{2}(\dplus m_+  \Lp \tanh\left( \Lp \tfrac{q_2 - q_1}{2} \right)+ \dmin m_-  \Lm \tanh\big( \Lm (\Lpanarh - q_2) \big)
+ S_I)  =: {\cal H}^{\rm p}_2\qud.
\end{align}
\end{subequations}

{We will now show the existence of a stationary solution of the ODE system \eqref{q12dot:planar} and discuss its stability with respect to {translational} perturbations.}

\begin{prop}
Let $S_I = 0$. The {triangular set } $\mathcal{T}:= \{(q_1, q_2) \in \RRR^2 \, : \, 0 \leq q_2 \leq \mathcal{L}, \, 0 \leq q_1 \leq q_2 \}$ is an invariant {set} for the ODE system {\eqref{q12dot:planar}}, i.e., if $(q_1(0), q_2(0)) \in \mathcal{T}$, then any solution $(q_1(t), q_2(t))_{t > 0}$ to {\eqref{q12dot:planar}} emanating from the initial condition $(q_1(0), q_2(0))$ satisfies $(q_1(t), q_2(t)) \in \mathcal{T}$ for all $t > 0$.
\end{prop}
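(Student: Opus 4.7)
My plan is to establish invariance by checking that the vector field $(\mathcal{H}_1^{\mathrm{p}}, \mathcal{H}_2^{\mathrm{p}})$ lies in the tangent cone of $\mathcal{T}$ at every boundary point of $\mathcal{T}$; since $\mathcal{T}$ is a closed convex polygon and the right-hand side of \eqref{q12dot:planar} extends continuously (via the $\tanh$ terms) to its closure, a standard Nagumo / sub-tangent argument then yields forward invariance. Concretely I only need to check the three edges, and the corner conditions follow automatically by the convexity of $\mathcal{T}$ combined with the two adjacent edge inequalities.

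The three edges to check are (i) $q_1=0$, $0\le q_2\le\mathcal{L}$, with outer normal $-e_1$, requiring $\mathcal{H}_1^{\mathrm{p}}(0,q_2)\ge 0$; (ii) $q_2=\mathcal{L}$, $0\le q_1\le\mathcal{L}$, with outer normal $+e_2$, requiring $\mathcal{H}_2^{\mathrm{p}}(q_1,\mathcal{L})\le 0$; and (iii) $q_1=q_2=q$, $q\in[0,\mathcal{L}]$, with outer normal proportional to $e_1-e_2$, requiring $\mathcal{H}_2^{\mathrm{p}}(q,q)-\mathcal{H}_1^{\mathrm{p}}(q,q)\ge 0$. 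The key facts to exploit are the sign conventions stated after \eqref{SharpI}, namely $\dplus=S_+/\rho_+<0$ and $\dmin=S_-/\rho_->0$, together with $\tanh(\cdot)\ge 0$ on $[0,\infty)$.

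For edge (i), since $S_I=0$, substituting $q_1=0$ into \eqref{q1dot:planar} and using $\tanh(0)=0$ leaves only the term $-\tfrac12\,\dplus m_+\lambda_+\tanh(\lambda_+ q_2/2)$, which is $\ge 0$ because $\dplus<0$. For edge (ii), substituting $q_2=\mathcal{L}$ into \eqref{q2dot:planar} leaves only $\tfrac12\,\dplus m_+\lambda_+\tanh(\lambda_+(\mathcal{L}-q_1)/2)$, which is $\le 0$ by the same sign reason. For edge (iii), setting $q_1=q_2=q$ makes the $\tanh(\lambda_+(q_2-q_1)/2)$ contributions cancel in $\mathcal{H}_2^{\mathrm{p}}-\mathcal{H}_1^{\mathrm{p}}$, yielding
\begin{equation*}
\mathcal{H}_2^{\mathrm{p}}(q,q)-\mathcal{H}_1^{\mathrm{p}}(q,q)=\tfrac12\,\dmin m_-\lambda_-\bigl[\tanh(\lambda_- q)+\tanh(\lambda_-(\mathcal{L}-q))\bigr]\ge 0
\end{equation*}
thanks to $\dmin>0$. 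Thus the vector field points into (or tangent to) $\mathcal{T}$ on each edge.

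The main technical point, rather than any hard estimate, is choosing the correct invariance framework: the most straightforward finish is to apply Nagumo's theorem (or equivalently a short direct argument: if a trajectory first exits $\mathcal{T}$ at some time $t_*$, then at $(q_1(t_*),q_2(t_*))$ on the boundary the outward component of the velocity would have to be strictly positive, contradicting the inequalities just established). A minor but clean alternative is to use proximity functions $q_1$, $\mathcal{L}-q_2$, $q_2-q_1$ and show each is non-decreasing along trajectories whenever it vanishes; this handles corners uniformly without separately invoking convexity of the tangent cone. I expect no further obstacle beyond being careful with the boundary cases where the formulas \eqref{mum:thin:lay} become degenerate, which is harmless at the ODE level.
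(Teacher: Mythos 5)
Your proposal is correct and follows essentially the same strategy as the paper: decompose $\pd\mathcal{T}$ into the three edges, verify the sub-tangency (Nagumo) condition $\bm{n}\cdot(\mathcal{H}_1^{\rm p},\mathcal{H}_2^{\rm p})\le 0$ on each using the sign conventions $\dplus<0$, $\dmin>0$ together with $\tanh\ge 0$ on $[0,\infty)$, and conclude forward invariance by a standard ODE invariance result (the paper cites Walter's book). Your edge computations coincide with the paper's. The only difference worth recording is at the corners: the paper explicitly parametrizes the outer normal cone at each of the three vertices and verifies the inner-product inequality there, whereas you observe that for the convex polygon $\mathcal{T}$ the outer normal cone at a vertex is the conical hull of the two adjacent edge normals, so the two closed-edge inequalities already imply the corner conditions; this is a slightly cleaner reduction, and it also bypasses the need to pin down the exact angular ranges at the vertices. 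One small caution: your informal ``first exit time'' remark, as stated, would require a strict inequality to give a contradiction, while the boundary inner products are only $\le 0$; the correct finish is indeed the Nagumo theorem you also invoke, so no actual gap remains.
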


\begin{proof}
We consider the decomposition of the boundary $\pd \mathcal{T}$ into $\pd \mathcal{T}_1 := \{ (0, q_2) \, : \, 0 \leq q_2 \leq \mathcal{L}\}$, $\pd \mathcal{T}_2 := \{ (q_1, \mathcal{L}) \, : \, 0 \leq q_1 \leq \mathcal{L}\}$ and $\pd \mathcal{T}_3 := \{ (q_1, q_1) \, : \, 0 \leq q_1 \leq \mathcal{L}\}$.  We say that $\bm{n}(\bm{z})$ is an outer normal to $\mathcal{T}$ at $\bm{z} \in \pd \mathcal{T}$ if the the open ball $B$ with center $\bm{z} + \bm{n}(\bm{z})$ and radius $|\bm{n}(\bm{z})|$ has empty intersection with $\mathcal{T}$. We then verify the tangent condition
\begin{align}\label{tangent:cond}
\bm{n}((q_1,q_2)) \cdot \begin{pmatrix} \mathcal{H}_1^{\rm p}(q_1, q_2) \\ \mathcal{H}_2^{\rm p}(q_1, q_2) \end{pmatrix} \leq 0 \quad \forall (q_1, q_2) \in \pd \mathcal{T},
\end{align}
where $\bm{n}((q_1, q_2))$ is an outer normal to $\mathcal{T}$ at $(q_1, q_2)$, so that by classical results for ordinary differential systems (see, e.g., {\cite[Chapter 10, page 118]{Walter}}) we infer that $\mathcal{T}$ is invariant under the dynamics of the ODE system {\eqref{q12dot:planar}}.

On $\pd \mathcal{T}_1$ the outer normal for $\{(0, q_2) \, : \, 0 < q_2 < \mathcal{L}\}$ is $(-1, 0)^{\top}$, and so
\begin{align*}
\begin{pmatrix} -1 \\ 0 \end{pmatrix} \cdot \begin{pmatrix} \mathcal{H}_1^{\rm p}(q_1, q_2) \\ \mathcal{H}_2^{\rm p}(q_1, q_2) \end{pmatrix}  = -
\mathcal{H}_1^{\rm p}(q_1, q_2) &= -\frac{1}{2}(- \dplus  m_+ \lambda_+ \tanh (\lambda_+ \tfrac{q_2}{2})) \leq  0.
\end{align*}
On $\pd \mathcal{T}_2$ the outer normal for $\{(q_1, \mathcal{L}) \, : \, 0 < q_1 < \mathcal{L}\}$ is $(0,1)^{\top}$, and so 
\begin{align*}
\begin{pmatrix} 0 \\ 1 \end{pmatrix} \cdot \begin{pmatrix} \mathcal{H}_1^{\rm p}(q_1, q_2) \\ \mathcal{H}_2^{\rm p}(q_1, q_2) \end{pmatrix}  = 
\mathcal{H}_2^{\rm p}(q_1, q_2) &= \frac{1}{2}(\dplus  m_+\lambda_+ \tanh (\lambda_+ \tfrac{\mathcal{L} - q_1}{2})) \leq 0.
\end{align*}
On $\pd \mathcal{T}_3$ the outer normal for $\{(q_1, q_1) \, : \, 0 < q_1 < \mathcal{L}\}$ is $(1, -1)^{\top}$, and so 
\begin{align*}
& \begin{pmatrix} 1 \\ -1 \end{pmatrix} \cdot \begin{pmatrix} \mathcal{H}_1^{\rm p}(q_1, q_2) \\ \mathcal{H}_2^{\rm p}(q_1, q_2) \end{pmatrix}   = \mathcal{H}_1^{\rm p}(q_1, q_2) - \mathcal{H}_2^{\rm p}(q_1, q_2) \\
& \quad = \frac{1}{2}(-\dmin  m_- \lambda_- \tanh(\lambda_- q_1)) - \frac{1}{2}(\dmin  m_- \lambda_- \tanh (\lambda_-(\mathcal{L} - q_1)) \leq 0.
\end{align*}
On the corners $\{(0,0)\}$, $\{(0, \mathcal{L})\}$ and $\{(\mathcal{L}, \mathcal{L})\}$ of $\mathcal{T}$ there are more than one choice of outer normal. Take for instance $\{(0,0)\}$, where the set of outer normals at $\{(0,0)\}$ can be expressed as $\{(\cos \theta, \sin \theta) \,: \, \theta \in (\pi, \frac{5\pi}{4})\} $. Then, we compute at $(q_1, q_2) = (0,0)$
\begin{align*}
\begin{pmatrix} \cos \theta \\ \sin \theta \end{pmatrix} \cdot \begin{pmatrix} \mathcal{H}_1^{\rm p}(q_1, q_2) \\ \mathcal{H}_2^{\rm p}(q_1, q_2) \end{pmatrix}  = \frac{1}{2} (\sin \theta ) (\dmin  m_- \lambda_- \tanh(\lambda_- \mathcal{L})) \leq 0,
\end{align*}
as $\sin \theta \leq 0$ for $\theta \in (\pi, \frac{5\pi}{4})$. Similarly, at $(q_1, q_2) = (0, \mathcal{L})$, the set of outer normals can be expressed as $\{(\cos \theta, \sin \theta) \, : \, \theta \in (\frac{\pi}{2}, \pi)\}$, and so 
\begin{align*} \begin{pmatrix} \cos \theta \\ \sin \theta \end{pmatrix} \cdot \begin{pmatrix} \mathcal{H}_1^{\rm p}(q_1, q_2) \\ \mathcal{H}_2^{\rm p}(q_1, q_2) \end{pmatrix} = \frac{1}{2} ( \sin \theta -\cos \theta) (\dplus  m_+ \lambda_+ \tanh(\lambda_+ \tfrac{\mathcal{L}}{2})) \leq 0.
\end{align*}
Lastly, at $(q_1, q_2) =( \mathcal{L}, \mathcal{L})$, the set of outer normals can be expressed as $\{(\cos \theta, \sin \theta) \, : \, \theta \in (\frac{5\pi}{4}, \frac{5 \pi}{2})\}$, and so 
\begin{align*} \begin{pmatrix} \cos \theta \\ \sin \theta \end{pmatrix} \cdot \begin{pmatrix} \mathcal{H}_1^{\rm p}(q_1, q_2) \\ \mathcal{H}_2^{\rm p}(q_1, q_2) \end{pmatrix} = \frac{1}{2} ( \cos \theta) (-\dmin  m_- \lambda_- \tanh(\lambda_- \mathcal{L}) \leq 0.
\end{align*}
This verifies the tangent condition \eqref{tangent:cond}, and hence $\mathcal{T}$ is an invariant {set} under the dynamics of the ODE system {\eqref{q12dot:planar}}.
\end{proof}

{We next show that the planar ODE system \eqref{q12dot:planar} admits an underlying gradient flow  structure.}

\begin{prop}
The ODE system {\eqref{q12dot:planar}} is a gradient flow of the function 
\begin{align*}
\mathcal{E}(q_1,q_2) & :=  \frac{\dmin  m_-}{2} \ln \cosh ( \lambda_- q_1) +  \frac{\dmin  m_-}{2} \ln \cosh (\lambda_- (\mathcal{L} - q_2)) \\
& \quad - \dplus  m_+ \ln \cosh (\lambda_+ \frac{q_2 - q_1}{2}) + \frac{1}{2}S_I (q_1 - q_2),
\end{align*}
which is a strictly convex Lyapunov function.
\end{prop}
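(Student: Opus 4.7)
The plan is to verify the three claims separately: the gradient flow identity $\dot q_i = -\partial_{q_i}\mathcal{E}$, strict convexity of $\mathcal{E}$, and the Lyapunov property. Using $\tfrac{d}{dx}\ln\cosh(x) = \tanh(x)$ and the chain rule, the partial derivatives of $\mathcal{E}$ can be computed directly:
\begin{align*}
\partial_{q_1}\mathcal{E} &= \tfrac{\dmin m_-\lambda_-}{2}\tanh(\lambda_- q_1) + \tfrac{\dplus m_+\lambda_+}{2}\tanh\!\bigl(\lambda_+\tfrac{q_2-q_1}{2}\bigr) + \tfrac{1}{2}S_I, \\
\partial_{q_2}\mathcal{E} &= -\tfrac{\dmin m_-\lambda_-}{2}\tanh(\lambda_-(\mathcal{L}-q_2)) - \tfrac{\dplus m_+\lambda_+}{2}\tanh\!\bigl(\lambda_+\tfrac{q_2-q_1}{2}\bigr) - \tfrac{1}{2}S_I,
\end{align*}
where the sign flips arise from $\partial_{q_1}[(q_2-q_1)/2]=-1/2$ and $\partial_{q_2}[\mathcal{L}-q_2] = -1$. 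Comparing these expressions with the explicit right-hand sides $\mathcal{H}_1^{\mathrm{p}}$ and $\mathcal{H}_2^{\mathrm{p}}$ given in \eqref{q1dot:planar}--\eqref{q2dot:planar}, one reads off $\mathcal{H}_i^{\mathrm{p}} = -\partial_{q_i}\mathcal{E}$ for $i=1,2$, so \eqref{q12dot:planar} is precisely the gradient flow of $\mathcal{E}$.

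For strict convexity I would compute the Hessian. Because $S_-,\rho_\pm,m_\pm>0$ whereas $S_+<0$, we have $\dmin>0$ and $\dplus<0$, so $-\dplus m_+>0$; thus all three prefactors multiplying the $\ln\cosh$-terms in $\mathcal{E}$ are strictly positive, while the affine $S_I$-term is irrelevant for the Hessian. Since $\ln\cosh$ has second derivative $1/\cosh^2>0$ and each argument is linear in $(q_1,q_2)$, setting
\[
\alpha_1 := \tfrac{\dmin m_-\lambda_-^2}{2\cosh^2(\lambda_- q_1)},\qquad
\alpha_2 := \tfrac{\dmin m_-\lambda_-^2}{2\cosh^2(\lambda_-(\mathcal{L}-q_2))},\qquad
\gamma := \tfrac{-\dplus m_+\lambda_+^2}{4\cosh^2(\lambda_+(q_2-q_1)/2)},
\]
which are strictly positive, one finds
\[
\nabla^2\mathcal{E} = \begin{pmatrix} \alpha_1+\gamma & -\gamma \\ -\gamma & \alpha_2+\gamma \end{pmatrix},
\]
with trace $\alpha_1+\alpha_2+2\gamma>0$ and determinant $\alpha_1\alpha_2+(\alpha_1+\alpha_2)\gamma>0$. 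Hence $\nabla^2\mathcal{E}$ is positive definite on all of $\mathbb{R}^2$, and $\mathcal{E}$ is strictly convex.

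The Lyapunov property is then an immediate consequence of the gradient flow structure: along any solution of \eqref{q12dot:planar},
\[
\tfrac{d}{dt}\mathcal{E}(q_1(t),q_2(t)) = \nabla\mathcal{E}\cdot(\dot q_1,\dot q_2)^{\!\top} = -|\nabla\mathcal{E}|^2 \leq 0,
\]
with equality only at critical points of $\mathcal{E}$. I do not anticipate any conceptual obstacle; the proof is essentially careful differentiation together with a $2\times 2$ Hessian computation, the main pitfall being the correct tracking of signs coming from $\dplus<0$ and the factor $-1/2$ produced by differentiating $(q_2-q_1)/2$ in $q_1$.
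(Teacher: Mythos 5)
Your proposal is correct and follows essentially the same route as the paper: differentiate $\mathcal{E}$ to match $-\mathcal{H}_i^{\rm p}$, then establish positive definiteness of the $2\times 2$ Hessian via positivity of trace and determinant, using $\dplus<0$ to make $-\dplus m_+>0$. Your $\alpha_1,\alpha_2,\gamma$ notation is a tidy repackaging of the paper's Hessian entries, and you also spell out the Lyapunov monotonicity $\tfrac{d}{dt}\mathcal{E}=-|\nabla\mathcal{E}|^2\le0$, which the paper leaves implicit.
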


\begin{proof}
A direct calculation shows that 
\begin{align*}
\frac{\pd \mathcal{E}}{\pd q_1} & = \frac{1}{2}(\dmin  m_- \lambda_- \tanh(\lambda_- q_1) + \dplus  m_+ \lambda_+ \tanh(\lambda_+ \tfrac{q_2 - q_1}{2}) + S_I) = - \mathcal{H}_1^{\rm p}(q_1, q_2), \\
\frac{\pd \mathcal{E}}{\pd q_2} & = -\frac{1}{2}(\dmin  m_- \lambda_- \tanh(\lambda_-(\mathcal{L} - q_2)) + \dplus  m_+ \lambda_+ \tanh(\lambda_+ \tfrac{q_2 - q_1}{2}) + S_I) = - \mathcal{H}_2^{\rm p}(q_1,q_2).
\end{align*}
We compute the entries of the Hessian matrix $D^2 \mathcal{E}$ of $\mathcal{E}$:
\begin{align*}
(D^2 \mathcal{E})_{11} & =
\frac{1}{2} \dmin  m_- \lambda_-^2 \mathrm{sech}^2(\lambda_- q_1) - \frac{1}{4} \dplus  m_+ \lambda_+^2 \mathrm{sech}^2(\lambda_+ \tfrac{q_2 - q_1}{2}), \\
(D^2 \mathcal{E})_{12} & = (D^2 \mathcal{E})_{21} = \frac{1}{4} \dplus  m_+ \lambda_+^2 \mathrm{sech}^2(\lambda_+ \tfrac{q_2 -q_1}{2}), \\
(D^2 \mathcal{E})_{22} & = \frac{1}{2} \dmin  m_- \lambda_-^2 \mathrm{sech}^2(\lambda_-(\mathcal{L} - q_2)) - \frac{1}{4} \dplus  m_+ \lambda_+^2 \mathrm{sech}^2(\lambda_+ \tfrac{q_2 - q_1}{2}).
\end{align*}
{Since $S_+$ is negative, while all the other parameters are positive,} we have that $\mathrm{tr}(D^2 \mathcal{E}) > 0 $ and $\det(D^2 \mathcal{E}) > 0$, and hence the two eigenvalues of $D^2 \mathcal{E}$ are positive. Thus, $\mathcal{E}$ is strictly convex.
\end{proof}

We say that $(\qstaru,\qstard)$ is a symmetric stationary solution if $\mathcal{H}_1^{\rm p}(\qstaru, \qstard) = 0$, $\mathcal{H}_2^{\rm p}(\qstaru, \qstard)$ and $\qstaru = \mathcal{L} - \qstard$. A symmetric stationary solution corresponds to the setting where $\Omega_-^{\rm in}$ and $\Omega_-^{\rm ext}$ are of the same size.
\begin{prop}
{
Let $S_I = 0$.  
The ODE system \eqref{q12dot:planar} admits a unique symmetric stationary solution 
$(\qstaru, \qstard)$, with 
$\qstaru = \mathcal{L} - \qstard \in \bigl(0, \tfrac{\mathcal{L}}{2}\bigr)$, 
which is the global minimum of the Lyapunov function $\mathcal{E}$ on $\mathcal{T}$.  
Moreover, every solution with initial data in $\mathcal{T}$, that is $(q_1(0), q_2(0)) \in \mathcal{T}$, converges to $(\qstaru, \qstard)$ as $t$ tends  to infinity.
}
\end{prop}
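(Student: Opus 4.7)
The plan is to exploit the gradient-flow structure of the previous proposition: for $S_I=0$ the system \eqref{q12dot:planar} reads $(\dot q_1, \dot q_2) = -\nabla \mathcal{E}(q_1, q_2)$ with $\mathcal{E}$ strictly convex on $\RRR^2$ and with $\mathcal{T}$ compact, convex, and forward invariant. All three assertions then reduce to locating the unique critical point of $\mathcal{E}$ inside $\mathcal{T}$ and invoking a standard LaSalle argument, so I do not expect any genuinely hard step.

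First I would construct the symmetric stationary solution by reducing to a scalar equation. Imposing the ansatz $q_2 = \mathcal{L} - q_1$ in $\mathcal{H}_1^{\rm p}, \mathcal{H}_2^{\rm p}$ and using $S_I = 0$, the explicit formulas \eqref{q12dot:planar} immediately yield the symmetry identity $\mathcal{H}_2^{\rm p}(q, \mathcal{L}-q) = -\mathcal{H}_1^{\rm p}(q, \mathcal{L}-q)$, so it suffices to find $q \in (0, \mathcal{L}/2)$ with
\begin{equation*}
\Phi(q) := -\dplus\, m_+ \Lp \tanh\!\bigl(\Lp (\tfrac{\mathcal{L}}{2} - q)\bigr) - \dmin\, m_- \Lm \tanh(\Lm q) = 0.
\end{equation*}
Since $-\dplus > 0$ and $\dmin > 0$, one has $\Phi(0) > 0$, $\Phi(\mathcal{L}/2) < 0$, and a direct differentiation gives $\Phi'(q) < 0$ on $(0, \mathcal{L}/2)$. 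The intermediate value theorem combined with strict monotonicity therefore produces a unique root $\qstaru \in (0, \mathcal{L}/2)$; setting $\qstard := \mathcal{L} - \qstaru$ then yields a symmetric stationary solution located in the interior of $\mathcal{T}$.

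Second, I would identify $(\qstaru, \qstard)$ as the unique global minimizer of $\mathcal{E}$ on $\mathcal{T}$. Since $\nabla \mathcal{E} = -(\mathcal{H}_1^{\rm p}, \mathcal{H}_2^{\rm p})^\top$, the equilibrium just constructed is a critical point of $\mathcal{E}$, and strict convexity of $\mathcal{E}$ on $\RRR^2$ then forces it to be the unique critical point, and hence the unique global minimizer, of $\mathcal{E}$ over $\RRR^2$. In particular $(\qstaru, \qstard)$ is the unique minimizer on $\mathcal{T}$ and the unique equilibrium of \eqref{q12dot:planar} in $\mathcal{T}$, which simultaneously settles uniqueness of symmetric stationary solutions and the minimality claim.

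For the convergence assertion I would run a standard LaSalle-type argument. Fix $(q_1(0), q_2(0)) \in \mathcal{T}$; by the invariance proposition the trajectory stays in the compact set $\mathcal{T}$, while $\tfrac{d}{dt}\mathcal{E}(q_1, q_2) = -|\nabla \mathcal{E}(q_1, q_2)|^2 \leq 0$ makes $\mathcal{E}$ a Lyapunov function. The $\omega$-limit set $\omega$ is then a non-empty compact invariant subset of $\mathcal{T}$ on which $\mathcal{E}$ is constant; differentiating $\mathcal{E}$ along the orbit initiated at any $p \in \omega$ forces $\nabla\mathcal{E}(p) = 0$, so each element of $\omega$ is an equilibrium. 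Since $(\qstaru, \qstard)$ is the only equilibrium in $\mathcal{T}$, we obtain $\omega = \{(\qstaru, \qstard)\}$, which yields $(q_1(t), q_2(t)) \to (\qstaru, \qstard)$ as $t \to \infty$. The one delicate bookkeeping point is that trajectories initiated on $\partial \mathcal{T}$ have to be admitted, but this is precisely what the forward invariance of $\mathcal{T}$ from the earlier proposition provides, while the smoothness of the vector field on all of $\RRR^2$ makes the Lyapunov computation uniformly valid.
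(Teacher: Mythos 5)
Your proposal is correct and follows essentially the same route as the paper: reduction to a scalar root-finding problem via the symmetry ansatz $q_2 = \mathcal{L} - q_1$, the intermediate value theorem on $(0, \mathcal{L}/2)$, strict convexity of $\mathcal{E}$ to identify $(\qstaru, \qstard)$ as the unique critical point and global minimizer, and a LaSalle argument on the compact forward-invariant set $\mathcal{T}$ for convergence. Your explicit check that $\Phi'(q) < 0$ is a small refinement that independently secures uniqueness of the scalar root, but it is redundant given the strict convexity of $\mathcal{E}$, and the paper omits it.
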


\begin{proof}
We consider a symmetric stationary solution ansatz $\qstaru = \mathcal{L} - \qstard$, which allows us to reduce to a one-dimensional problem of solving $\mathcal{H}_1^{\rm p}(\qstaru, \mathcal{L} - \qstaru) = 0$. Observe that
\[
\mathcal{H}_1^{\rm p}(0, \mathcal{L}) = -\frac{1}{2} \dplus  m_+ \lambda_+ \tanh(\lambda_+ \tfrac{\mathcal{L}}{2}) > 0,
\]
while
\[
\mathcal{H}_1^{\rm p}(\tfrac{\mathcal{L}}{2},\tfrac{\mathcal{L}}{2}) = - \frac{1}{2} \dmin  m_- \lambda_- \tanh(\lambda_- \tfrac{\mathcal{L}}{2}) < 0,
\]
and so by continuity of the function $q \mapsto \mathcal{H}_1^{\rm p}(q, \mathcal{L} - q)$, there exists a root $\qstaru \in (0,\tfrac{\mathcal{L}}{2})$ to the equation $\mathcal{H}_1^{\rm p}(\qstaru, \mathcal{L} - \qstaru) = 0$. Such a root fulfills
\begin{align}\label{planar:symm:root}
- \dplus  m_+ \lambda_+ \tanh(\lambda_+( \tfrac{\mathcal{L}}{2} - \qstaru)) = \dmin  m_- \lambda_- \tanh(\lambda_- \qstaru).
\end{align}
Setting $\qstard = \mathcal{L} - \qstaru$ in \eqref{planar:symm:root} also provides $\mathcal{H}_{2}^{\rm p}(\qstaru, \qstard) = 0$, and so $(\qstaru, \qstard)$ is an equilibrium of {\eqref{q12dot:planar}}.  As $\mathcal{E}$ is a strictly convex Lyapunov function, it has only one minimum, which we can identify as $(\qstaru, \qstard)$.  Then, invoking LaSalle's {principle} \cite[Chapter 6]{Teschl}, as $\mathcal{T}$ is a compact {set} in $\RRR^2$, we infer that any solution $(q_1(t), q_2(t))$ to {\eqref{q12dot:planar}} emanating from $(q_1(0), q_2(0)) \in \mathcal{T}$ converges to $(\qstaru, \qstard)$ as $t \to \infty$.

\end{proof}

In the simplest scenario, where we set the parameters in \eqref{planar:symm:root} such that the prefactors $\dplus  m_+ \lambda_+ = -1$ and $\dmin  m_- \lambda_- = 1$, {and} using the strict monotonicity of the $\tanh$ function, it follows that 
\[
q_1^{\star} = \frac{\mathcal{L}}{2 (1 + \frac{\lambda_-}{\lambda_+})}, \quad q_2^{\star} = \frac{\mathcal{L}(1 + 2 \frac{\lambda_-}{\lambda_+})}{2(1 + \frac{\lambda_-}{\lambda_+})}.
\]
From the above formula, we observe that in the limit $\frac{\lambda_-}{\lambda_+} \to \infty$:
\[
\lim_{\frac{\lambda_-}{\lambda_+} \to \infty} \qstaru = 0, \quad \lim_{\frac{\lambda_-}{\lambda_+} \to \infty} \qstard = \mathcal{L},
\]
so that the $\Omega_+$ phase grows to occupy the entire domain. On the other hand, in the limit $\frac{\lambda_-}{\lambda_+} \to 0$:
\[
\lim_{\frac{\lambda_-}{\lambda_+} \to 0} \qstaru = \frac{\mathcal{L}}{2}, \quad \lim_{\frac{\lambda_-}{\lambda_+} \to 0} \qstard = \frac{\mathcal{L}}{2},
\]
so that the $\Omega_-$ phases grow to occupy the entire domain.

In the case where $S_I \neq 0$, we can prescribe conditions to guarantee the existence of a symmetric stationary solution $(\qstaru,\qstard)$.  From the above arguments, we require
\begin{align*}
\mathcal{H}_1^{\rm p}(0, \mathcal{L}) & = \frac{1}{2}(- \dplus  m_+ \lambda_+ \tanh(\lambda_+ \tfrac{\mathcal{L}}{2}) - S_I )> 0,\\
\mathcal{H}_1^{\rm p}(\tfrac{\mathcal{L}}{2},\tfrac{\mathcal{L}}{2}) & = \frac{1}{2}(- \dmin  m_- \lambda_- \tanh(\lambda_- \tfrac{\mathcal{L}}{2}) - S_I) < 0.
\end{align*}
Upon rearranging, we find that a sufficient condition to obtain the existence of a symmetric stationary solution to {\eqref{q12dot:planar}} is
\begin{align*}
-\dmin  m_- \lambda_- \tanh( \lambda_- \tfrac{\mathcal{L}}{2}) < S_I < - \dplus  m_+ \lambda_+ \tanh(\lambda_+ \tfrac{\mathcal{L}}{2}).
\end{align*}

\subsubsection{Linear stability of {flat multilayered} solutions}
Next, we discuss the stability of the {flat multilayered} solutions obtained above around the stationary fronts identified by $\qstaru$ and $\qstard$, which satisfy ${\cal H}^{\rm p}_1(\qstaru, \qstard) = 0$ and ${\cal H}^{\rm p}_2(\qstaru, \qstard) = 0$. We consider perturbed interfaces of the form $w_i := q^\star_i + \badeps \rad_i$, $i=1,2$, where $0 < \badeps < 1$ and $\rad_i = \rad_i(t, \hat x)$ are suitable perturbations. We define perturbed domains as
\begin{align*}
	\Omega_{\rad_1,\badeps}^{-,{\rm in}} & := \{
	x \in \Omega\,\, : \,\,
	0 < {z}  <w_1(t, \hat x)
	\}, \\ 
	\Omega_{\rad_{1,2},\badeps}^+ & := \{
	x \in \Omega \,\, : \,\,
	 w_1(t, \hat x)<{z}  < w_2(t, \hat x)
	\},
	\\ 
	\Omega_{\rad_2,\badeps}^{-,{\rm ext}} & := \{
	x \in \Omega\,\, : \,\,
	w_2(t, \hat x) < z < \mathcal{L}
	\},
\end{align*}
with the  ansatz
\begin{align*}
		\mumin (t,x) & = \mu_-^{\star,{\rm in}}(z) + \badeps \uin (t,x),
		\\
		\mustpl (t,x) & = \mustpl(z) + \badeps \upl (t,x),
		\\
		\mumex (t,x) & = \mu_-^{\star,{\rm ext}} (z) + \badeps \uext (t,x),
\end{align*}
{where  $\mumin, \mustpl $ and $\mumex$ are the solutions to \eqref{Flatmultil} corresponding to the stationary states $\qstaru$ and $\qstard$, respectively,}
and demand that these solve the following free boundary problem on the perturbed domains:
\begin{subequations}\label{linstab:planar:membrane}
\begin{alignat}{3}
	- m_- \Delta (\mu_-^{\star,{\rm in}} + \badeps \uin) &= S_- - \rho_- (\mu_-^{\star,{\rm in}} + \badeps \uin) \quad & & \text{ in } \Omega_{\rad_1,\badeps}^{-,{\rm in}}, \\
	- m_+ \Delta (\mustpl + \badeps u_+) &= S_+ - \rho_+ (\mustpl + \badeps u_+) \quad && \text{ in } \Omega_{\rad_{1,2},\badeps}^+ , \\
- m_- \Delta (\mu_-^{\star,{\rm ext}} + \badeps \uext) &= S_- - \rho_- (\mu_-^{\star,{\rm ext}}+ \badeps \uext) \quad & & \text{ in }\Omega_{\rad_2,\badeps}^{-,{\rm ext}}, \\
	 (\mu_-^{\star,{\rm in}} + \badeps \uin)  &= \alpha \kappa_1 =(\mustpl + \badeps \upl) \quad && \text{ on } \{{z} = w_1\}, \\
	  (\mustpl + \badeps \upl) &= \alpha  \kappa_2 =(\mu_-^{\star,{\rm ext}} + \badeps \uext)  \quad && \text{ on } \{{z} = w_2\}, \\
	 - 2 {\cal V}_1 & = [m \nabla (\mu^{\star} + \badeps u)]_{-}^+ \cdot \bm{\nu}_1 + S_I \quad && \text{ on } \{{z} = w_1\}, \\
	 - 2 {\cal V}_2 &= [m \nabla (\mu^{\star} + \badeps u)]_{-}^+ \cdot \bm{\nu}_2 + S_I \quad && \text{ on } \{{z} = w_2\},
\end{alignat}
\end{subequations}
along with the Neumann boundary conditions
\[
 (\mu_-^{\star,{\rm in}} + \badeps \uin)'(t, 0)= 0, \quad 
  (\mu_-^{\star,{\rm ext}} + \badeps \uext)'(t, {\cal L})  = 0,
\]
where we recall the prime notation indicates partial derivatives with respect to $z$. In the above, $\kappa_i$ and $\bm{\nu}_i$ denote the mean curvature and unit normal of the interface {$\{z = w_i\}$}.

Linearizing the above equations around the stationary solution $(\mu_-^{\star,{\rm in}}, \mu_+^{\star}, \mu_-^{\star,{\rm ext}},  \qstaru, \qstard)$ leads to the following system for $(\uin, u_+, \uext)$:
\begin{subequations}\label{perturb:planar:memb}
\begin{alignat}{3}
\label{pertplan:memb:1}  - m_- \Delta \uin & = - \rho_- \uin \quad && \text{ in } \{0<{z} < \qstaru\}, \\
\label{pertplan:memb:2}  - m_+ \Delta u_+ & = - \rho_+  u_+ \quad && \text{ in } \{\qstaru<{z} < \qstard\}, \\
\label{pertplan:memb:3}  - m_- \Delta \uext & = - \rho_- \uext\quad && \text{ in } \{
{  \qstard<{z}<{\cal L}}\}, \\
\label{pertplan:memb:4} \alpha \Delta_{\hat x} \rad_1& =  ( \mu^{\star, {\rm in}}_-)' \vert_{{z} = \qstaru} \rad_1 + \uin  = ( \mustpl)' \vert_{{z} = \qstaru} \rad_1 + \upl \quad && \text{ on } \{ {z} = \qstaru\}, \\
\label{pertplan:memb:5} - \alpha \Delta_{\hat x} \rad_2  &= ( \mustpl)' \vert_{{z} = \qstard} \rad_2 + \upl 
= 
( \mu^{\star, {\rm ext}}_-)' \vert_{{z} = \qstard} \rad_2 + \uext 
\quad && \text{ on } \{ {z} = \qstard\}, \\
\label{pertplan:memb:6}  2 {\dot Y_1} &= - m_+ ((\mustpl)'' \vert_{{z} = \qstaru} \rad _1+ u_+') && \\
\notag & \quad + m_- (\mu_-^{\star,{\rm in}})'' \vert_{{z} = \qstaru} \rad_1 +(\uin)')  \quad && \text{ on } \{ {z} = \qstaru \}, \\
\label{pertplan:memb:7}  2 {\dot Y_2} &= - m_+ ((\mustpl)'' \vert_{{z} = \qstard} \rad _2+ u_+')
&& \\
\notag & \quad +  m_- ((\mu_-^{\star,{\rm ext}})'' \vert_{{z} =\qstard} \rad_2 +(\uext)')  \quad && \text{ on } \{ {z} = \qstard \}, \\
\label{pertplan:memb:8}  (\uin)'(t, 0) &= 0 , \quad  (\uext)'(t, {\cal L}) = 0,  && \\
\label{pertplan:memb:9} \dn Y_1 & = 0, \quad \dn Y_2 = 0 \quad && \text{ on } \pd (0, \widetilde{\cal L})^{d-1}. 
\end{alignat}
\end{subequations}
We make the following ansatz
\begin{align*}
Y_1(t,\widehat{x}) = y_1 \delta(t) W(\widehat{x}), \quad Y_2(t,\widehat{x}) =y_2  \delta(t) W(\widehat{x}), 
\end{align*}
for some $y_1, y_2 \in \RRR$ to be determined, and set
\begin{align*}
\uin(t,x) = \delta(t)\vin(z) W(\widehat{x}), \quad u_+(t,x) = \delta(t)v_+(z)W(\widehat{x}) , \quad \uext(t,x)  = \delta(t) \vext(z) W(\widehat{x}), 
\end{align*}
where we fix $W$ as a particular eigenfunction of the $\Delta_{\widehat{x}}$-operator with Neumann boundary conditions such that for $\hat{\bm {\ell}} = (\ell_2, \dots, \ell_d) \in (\enne \cup \{ 0 \})^{d-1}$,
\begin{equation*}
\begin{cases}
\displaystyle \Delta_{\hat x} W  =  \frac{\zeta_{\hat{\bm\ell}, d}}{(\widetilde{\cal L})^2} W & \text{ in } (0, \widetilde{\cal L})^{d-1},\\
\dn W  = 0 & \text{ on } \partial (0, \widetilde{\cal L})^{d-1},
\end{cases} 
\end{equation*}
where $\frac{\zeta_{\hat{\bm\ell}, d}}{(\widetilde{\cal L})^2}$ serves as the corresponding eigenvalue. A possible choice is
\[
W(x_2, \dots, x_d) = \cos \Big ( \frac{\pi}{\widetilde{\cal L}} \ell_2 x_2 \Big ) \times \cdots \times \cos \Big ( \frac{\pi}{\widetilde{\cal L}} \ell_d x_d \Big ),
\]
so that $\zeta_{\hat{\bm \ell}, d} = - \pi^2 \big((\ell_2)^2 + \cdots + (\ell_d)^2\big) = - \pi^2 |\bm\ell|^2$. 

With these {ansatzes}, and setting 
\[
\Gamma_{\pm}^{\widehat{\bm{\ell}}} := \sqrt{\lambda_{\pm}^2 - \frac{\zeta_{\widehat{\bm{\ell}},d}}{(\widetilde{\cal L})^2}} = \sqrt{\frac{\rho_{\pm}}{m_{\pm}}- \frac{\zeta_{\widehat{\bm{\ell}},d}}{(\widetilde{\cal L})^2}},
\]
we then solve the following system of equations
\begin{subequations}
\begin{alignat}{3}
\label{planar:shell:perturbed:1} (\vin)'' & = (\Gamma_{-}^{\widehat{\bm{\ell}}})^2 \vin \quad && \text{ in } (0, \qstaru), \\
\label{planar:shell:perturbed:2} (v_+)'' & = (\Gamma_{+}^{\widehat{\bm{\ell}}})^2 v_+, \quad && \text{ in } (\qstaru,\qstard), \\ 
\label{planar:shell:perturbed:3} (\vext)'' & = (\Gamma_{-}^{\widehat{\bm{\ell}}})^2 \vext \quad && \text{ in } (\qstard, \mathcal{L}), \\
\label{planar:shell:perturbed:4}  y_1 \alpha \frac{\zeta_{\hat{\bm\ell}, d}}{(\widetilde{\cal L})^2} & =  y_1 ( \mu^{\star, {\rm in}}_-)' + \vin  = y_1 ( \mustpl)' + v_+, \quad && \text{ on } \{ {z} = \qstaru\}, \\
\label{planar:shell:perturbed:5}  -y_2 \alpha \frac{\zeta_{\hat{\bm\ell}, d}}{(\widetilde{\cal L})^2}   &= y_2 ( \mustpl)'   + v_+= y_2 ( \mu^{\star, {\rm ext}}_-)'+ v_-^{\rm ext}, \quad && \text{ on } \{ {z} = \qstard\}, \\
\label{planar:shell:perturbed:6} 2 {\dot Y_1} & = - m_+ ((\mustpl)''  \rad _1+ u_+')  + m_- ((\mu_-^{\star,{\rm in}})''  \rad_1 +(\uin)')  \quad && \text{ on } \{ {z} = \qstaru \}, \\
\label{planar:shell:perturbed:7} 2 {\dot Y_2} & = - m_+ ((\mustpl)'' \rad _2+ u_+') + m_- ((\mu_-^{\star,{\rm ext}})'' \rad_2 +(\uext)')   \quad && \text{ on } \{ {z} = \qstard \}, \\
\label{planar:shell:perturbed:8}  (\vin)'(t, 0) & = 0, \quad (\vext)'(t, {\cal L}) = 0. && 
\end{alignat}
\end{subequations}
From \eqref{planar:shell:perturbed:1}, \eqref{planar:shell:perturbed:2}, \eqref{planar:shell:perturbed:3} and \eqref{planar:shell:perturbed:8}, we infer that
\begin{subequations}
\begin{alignat}{3}
\label{planar:vin} \vin(z) & = a_-^{\rm in} \cosh \big ( \Gamma_-^{\widehat{\bm{\ell}}} z \big ) \quad && z \in (0, \qstaru), \\
\label{planar:vp} v_+(z) & = a_+ \sinh \big ( \Gamma_-^{\widehat{\bm{\ell}}} (z - \qstaru) \big ) + b_+ \sinh \big ( \Gamma_-^{\widehat{\bm{\ell}}} (z - \qstard) \big )\quad &&  z \in (\qstaru,\qstard), \\
\label{planar:vext} \vext(z) & = a_-^{\rm ext} \cosh \big ( \Gamma_-^{\widehat{\bm{\ell}}} (\mathcal{L} -  z) \big ) \quad && z \in (\qstard, \mathcal{L}), 
\end{alignat} 
\end{subequations}
for unknown functions $a_-^{\rm in}$, $a_+$, $b_+$ and $a_-^{\rm ext}$ to be determined. Recalling {\eqref{mum:thin:lay}} we see that  
\begin{equation*}
\begin{alignedat}{3}
(\mustin)'(\qstaru) &= -\dmin  \Lm \tanh(\Lm \qstaru), 
\quad
&&(\mustin)''(\qstaru) = -\dmin  \Lm^2,
\\
(\mustpl)'(\qstaru) &= \dplus  \Lp \tanh\left( \Lp  \tfrac{\qstard - \qstaru}{2}  \right), 
\quad 
&&(\mustpl)''(\qstaru) = -\dplus  \Lp^2, 
\\
(\mustpl)'(\qstard) &= -\dplus  \Lp \tanh\left( \Lp  \tfrac{\qstard - \qstaru}{2} \right), 
\quad 
&&(\mustpl)''(\qstard) = -\dplus  \Lp^2 ,
\\
(\mustext)'(\qstard) &= \dmin  \Lm \tanh\left( \Lm (\Lpanarh - \qstard) \right), 
\quad
&&(\mustext)''(\qstard) = -\dmin  \Lm^2,
\end{alignedat}
\end{equation*}
and so from \eqref{planar:shell:perturbed:4} and \eqref{planar:shell:perturbed:5} we have the conditions
\begin{align*}
\vin(\qstaru) & = \Big ( \dmin  \Lm \tanh(\Lm \qstaru) + \alpha \frac{\zeta_{\widehat{\bm{\ell}},d}}{(\widetilde{\cal L})^2} \Big ) y_1, \\
v_+(\qstaru) & = \Big (- \dplus  \Lp \tanh \Big ( \Lp \tfrac{\qstard-\qstaru}{2} \Big ) + \alpha \frac{\zeta_{\widehat{\bm{\ell}},d}}{(\widetilde{\cal L})^2} \Big ) y_1, \\
\vext(\qstard) & = \Big (- \dmin  \Lm \tanh (\Lm (\mathcal{L} - \qstard)) - \alpha \frac{\zeta_{\widehat{\bm{\ell}},d}}{(\widetilde{\cal L})^2} \Big ) y_2, \\
v_+(\qstard) & = \Big (\dplus  \Lp \tanh \Big ( \Lp \tfrac{\qstard-\qstaru}{2} \Big )  - \alpha \frac{\zeta_{\widehat{\bm{\ell}},d}}{(\widetilde{\cal L})^2} \Big ) y_2,
\end{align*}
which also allows us to identify
\begin{align*}
a_-^{\rm in} &= \frac{ \Big ( \dmin  \Lm \tanh(\Lm \qstaru) + \alpha \frac{\zeta_{\widehat{\bm{\ell}},d}}{(\widetilde{\cal L})^2}  \Big ) y_1}{\cosh (\Gamma^{\widehat{\bm{\ell}}}_- \qstaru)}, \quad a_-^{\rm ext} = \frac{ \Big (- \dmin  \Lm \tanh (\Lm (\mathcal{L} - \qstard)) - \alpha \frac{\zeta_{\widehat{\bm{\ell}},d}}{(\widetilde{\cal L})^2} \Big ) y_2}{\cosh(\Gamma^{\widehat{\bm{\ell}}}_-(\mathcal{L} - \qstard))}, \\
a_+ & = \frac{\Big (\dplus  \Lp \tanh \Big ( \Lp \tfrac{\qstard-\qstaru}{2} \Big )  - \alpha \frac{\zeta_{\widehat{\bm{\ell}},d}}{(\widetilde{\cal L})^2} \Big ) y_2}{\sinh(\Gamma^{\widehat{\bm{\ell}}}_+(\qstard-\qstaru))}, \quad b_+ =  \frac{ \Big ( \dplus  \Lp \tanh \Big ( \Lp \tfrac{\qstard-\qstaru}{2} \Big ) - \alpha \frac{\zeta_{\widehat{\bm{\ell}},d}}{(\widetilde{\cal L})^2} \Big ) y_1}{\sinh(\Gamma^{\widehat{\bm{\ell}}}_+(\qstard-\qstaru))},
\end{align*}
where we have also used that $\sinh$ is an odd function. Then, for the evolution equations \eqref{planar:shell:perturbed:6}--\eqref{planar:shell:perturbed:7}, we infer that
\begin{align*}
2 \dot{Y}_1  = 2 y_1 \dot{\delta} W & =- m_+ \Big [ -\dplus  \lambda_+^2 y_1 + \Big ( \Gamma^{\widehat{\bm{\ell}}}_+ \Big ( a_+ + b_+ \cosh (\Gamma^{\widehat{\bm{\ell}}}_+(\qstard-\qstaru) \Big ) \Big ) \Big ] \delta W \\
& \quad + m_- \Big [ -\dmin  \lambda_-^2 y_1 + a_-^{\rm in} \Gamma^{\widehat{\bm{\ell}}}_- \sinh(\Gamma^{\widehat{\bm{\ell}}}_- \qstaru) \Big ] \delta W, \\
2 \dot{Y}_2  = 2 y_2 \dot{\delta} W & =- m_+ \Big [ -\dplus  \lambda_+^2 y_2 + \Big ( \Gamma^{\widehat{\bm{\ell}}}_+ \Big ( a_+ \cosh \big (\Gamma^{\widehat{\bm{\ell}}}_+ (\qstard-\qstaru) \big)+ b_+ \Big ) \Big ) \Big ] \delta W \\
& \quad + m_- \Big [ -\dmin  \lambda_-^2 y_2 - a_{-}^{\rm ext} \Gamma^{\widehat{\bm{\ell}}}_- \sinh \Big ( \Gamma^{\widehat{\bm{\ell}}}_- (\mathcal{L} - \qstard) \Big ) \Big ] \delta W,
\end{align*}
which can be expressed with the help of a matrix structure as
\begin{align}\label{perturbation_matrix:multiplanar}
2 \dot{\delta} W \begin{pmatrix} y_1 \\ y_2 \end{pmatrix} = \delta W \begin{pmatrix} A_{11} & A_{12} \\ A_{21} & A_{22} \end{pmatrix} \begin{pmatrix} y_1 \\ y_2 \end{pmatrix} = \delta W \mathbf{A} \begin{pmatrix} y_1 \\ y_2 \end{pmatrix},
\end{align}
where the entries of the symmetric matrix $\mathbf{A}$ are
\begin{align*}
A_{11} & = - m_+ \left [ - \dplus  \lambda_+^2 + \frac{\Big ( \dplus  \lambda_+ \tanh(\lambda_+ \frac{\qstard-\qstaru}{2}  ) - \alpha \frac{\zeta_{\widehat{\bm{\ell}},d}}{(\widetilde{\mathcal{L}})^2 }\Big ) \Gamma^{\widehat{\bm{\ell}}}_+ \cosh  ( \Gamma^{\widehat{\bm{\ell}}}_+ (\qstard-\qstaru)  )}{\sinh ( \Gamma^{\widehat{\bm{\ell}}}_+ (\qstard-\qstaru)  )} \right ] \\
& \quad + m_- \left [ -\dmin  \lambda_-^2 + \frac{\Big ( \dmin  \lambda_- \tanh(\lambda_- \qstaru) + \alpha \frac{\zeta_{\widehat{\bm{\ell}},d}}{(\widetilde{\mathcal{L}})^2} \Big ) \Gamma^{\widehat{\bm{\ell}}}_- \sinh  ( \Gamma^{\widehat{\bm{\ell}}}_- \qstaru  )}{\cosh  ( \Gamma^{\widehat{\bm{\ell}}}_- \qstaru  )} \right ] , \\
A_{12} & = A_{21} = - m_+ \Gamma^{\widehat{\bm{\ell}}}_+ \frac{\Big ( \dplus  \lambda_+ \tanh ( \lambda_+ \frac{\qstard-\qstaru}{2}) - \alpha \frac{\zeta_{\widehat{\bm{\ell}},d}}{(\widetilde{\mathcal{L}})^2} \Big )}{\sinh  ( \Gamma^{\widehat{\bm{\ell}}}_+ (\qstard - \qstaru)  )}, \\
A_{22} & = -m_+ \left [ - \dplus  \lambda_+^2 +  \frac{\Big ( \dplus  \lambda_+ \tanh(\lambda_+ \frac{\qstard-\qstaru}{2}  ) - \alpha \frac{\zeta_{\widehat{\bm{\ell}},d}}{(\widetilde{\mathcal{L}})^2 }\Big ) \Gamma^{\widehat{\bm{\ell}}}_+ \cosh  ( \Gamma^{\widehat{\bm{\ell}}}_+ (\qstard-\qstaru)  )}{\sinh ( \Gamma^{\widehat{\bm{\ell}}}_+ (\qstard-\qstaru)  )} \right ] \\
& \quad + m_- \left [ - \dmin \lambda_-^2 +  \frac{\Big (\dmin  \lambda_- \tanh(\lambda_-(\mathcal{L}- \qstard)) + \alpha \frac{\zeta_{\widehat{\bm{\ell}},d}}{(\widetilde{\mathcal{L}})^2} \Big ) \Gamma^{\widehat{\bm{\ell}}}_- \sinh  ( \Gamma^{\widehat{\bm{\ell}}}_- (\mathcal{L} - \qstard)  )}{\cosh  ( \Gamma^{\widehat{\bm{\ell}}}_- (\mathcal{L} - \qstard)  )}  \right ].
\end{align*}
We note that for a symmetric stationary solution, i.e., $\qstaru = \mathcal{L} - \qstard$, it even holds that $A_{11} = A_{22}$. Consequently, the matrix $\mathbf{A}$ has eigenvalues $A_{11} + A_{12}$ and $A_{11} - A_{12}$. Hence, instability can be observed if either one of the eigenvalues is positive.

\subsection{Radial multilayered solutions}
\label{SEC:RAD:MULTI}

We now proceed with the analysis of radial multilayered solutions for {\eqref{SharpI}}. In this direction, we consider again the setting $\Omega = B_R(0)$ and seek for radially symmetric solutions under the geometry
\begin{align*}
\Omega^{\rm int}_-(t) = 
B_{q_1(t)}(0),
\quad 
\Omega_+(t) = B_{q_2(t)}(0) \setminus \ov{B_{q_1(t)}(0)},
\quad \Omega^{\rm ext}_-(t) = B_R(0)\setminus \ov{B_{q_2(t)}(0)},
\end{align*}
where $q_1=q_1(t)$ and $q_2=q_2(t)$ encode the location of the moving interfaces $\Sigma^1 := \partial B_{q_1(t)}(0)$ and $\Sigma^2 := \partial B_{q_2(t)}(0)$ for which we assume the condition $q_1 <q_2$ holds for all times. For $p_i \in \Sigma^i$, with $i=1,2$, the unit normal, normal velocity and mean curvature are given as
\begin{subequations} \label{q123:prop:shell}
\begin{alignat}{3}
	\label{q1:prop:shell}
	\bnu_1 (p_1) & =\frac {p_1}{|p_1|} = \frac {p_1}{q_1(t)},
	\qquad &
	{\cal V}_1 &= \dot{q_1}(t),
	\qquad &
	\kappa_1 &= -\frac {d-1}{q_1(t)},
	\\
	\label{q2:prop:shell}
	\bnu_2 (p_2) & =- \frac {p_2}{|p_2|} =- \frac {p_2}{q_2(t)},
	\qquad &
	{\cal V}_2 &= -\dot{q_2}(t),
	\qquad &
	\kappa_2 & = \frac {d-1}{q_2(t)},
\end{alignat}
\end{subequations}
respectively{,} see Figure~\ref{fig:radialshell} for a schematic representation.

\begin{figure}[h]
\centering
\begin{tikzpicture}[scale=1.1]
  \draw[thick] (0,0) circle (1.5);
  \draw[thick] (0,0) circle (3);
  \draw[thick] (0,0) circle (4.5);
  
    \filldraw[fill=black!25, thick, line join=bevel]
 	(0,0) circle (3);
 	    \filldraw[fill=black!0, thick, line join=bevel]
 	(0,0) circle (1.5);

  \node at (0,0) {$\Omega^{\textrm{in}}_-(t)$};
  \node at (-2.3,0) {$\Omega_+(t)$};
  \node at (-3.7,0) {$\Omega^{\textrm{ext}}_-(t)$};

  \node at (0,-1.7) {$q_1(t)$};
  \node at (0,-3.2) {$q_2(t)$};

  \draw[->] (0,1.5) -- (0,2.2) node[midway, right] {$\boldsymbol{\nu}_1(t)$};
  \draw[->] (3,0) -- (2,0) node[midway, below] {$\boldsymbol{\nu}_2(t)$};
  \draw[->] (4.5,0) -- (5.2,0) node[midway, below] {$\boldsymbol{n}$};

  \node at (4.2,3.5) {$\partial B_{R}(0)$};
\end{tikzpicture}
\caption{The geometric setting for the radial multilayered solutions.}
\label{fig:radialshell}
\end{figure}
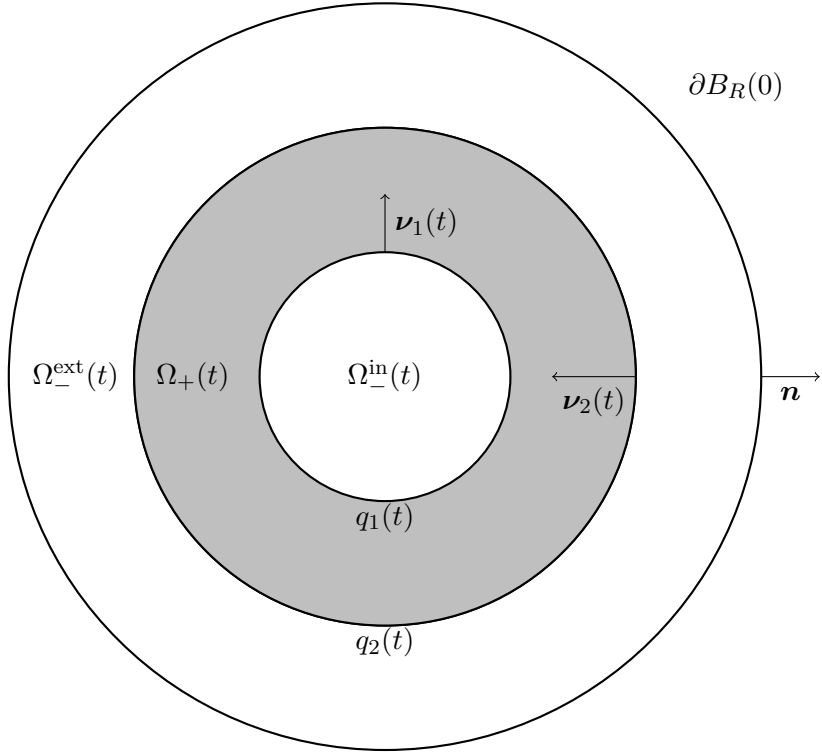
In this radially symmetric setting, we derive from {\eqref{SharpI}}\ the system
\begin{subequations}\label{rad:shell}
\begin{alignat}{2}
	\label{rad:shell:1}
	 m_-(\mumin)'' +   m_-\frac {d-1}r (\mumin)' & =
	-{S_-} +  \rho_- \mumin
	 \quad &&\text{in $\{{0<r<q_1(t)}\}$,}
	\\
	\label{rad:shell:2}
	 m_+\mu''_+ +   m_+\frac {d-1}r \mu_+' & =
	- {S_+}  +  \rho_+ \mu_+
	 \quad &&\text{in $\{q_1(t)<r<q_2(t)\}$,}
	\\
		\label{rad:shell:3}
	  m_-(\mumex)''_- +   m_-\frac {d-1}r (\mumex)' & =
	- {S_-}  +  \rho_- \mumex
	 \quad &&\text{in $\{{q_2(t)<r<R}\}$,}
	\\
	\label{rad:shell:4}
	 \mumin & = \mu_+ = -\alpha\frac{d-1}{q_1(t)}
	\quad && \text{on $\{r=q_1(t)\}$,}
	\\
	\label{rad:shell:5}
	 \mu_+ & = \mumex = \alpha\frac{d-1}{q_2(t)}
	\quad && \text{on $\{r=q_2(t)\}$,}
	\\
	\label{rad:shell:8}
	 2\dot{q_1} & = -\jump{m\mu'} - S_I
	\quad && \text{on $\{r=q_1(t)\}$,}
	\\
	\label{rad:shell:9}
	 2\dot{q_2} & = -\jump{m\mu'} + S_I
	\quad && \text{on $\{r=q_2(t)\}$,}
	\\
	\label{rad:shell:10}
	(\mumex)'(t, r) & =0
	\quad && \text{on $\{r = R\}$,}
\end{alignat}
\end{subequations}
where we recall {that} the prime notation indicates derivative with respect to $r$.
We further complement the above system with the {condition} at the origin
\begin{align}
	\label{rad:shell:11}
	\mumin(t,0)<\infty.
\end{align}

\subsubsection{Analytical {formulas} for {the} solutions}
Recalling the constants $\lambda_{\pm} = \sqrt{\frac{\rho_{\pm}}{m_{\pm}}}$, we introduce the coefficient functions
\begin{align*}
	\bmin (q_1) & = \Big(-\alpha\frac{d-1}{q_1} - \frac{S_-}{\rho_-} \Big) \frac 1{\Iz(\Lm  q_1)}, \\
		b_{d,+}\qud & 
	 = \frac{  \frac 1{\Kz(\Lp q_1)} \Big(-\alpha\frac{d-1}{q_1} - \frac{S_+}{\rho_+} \Big) - \frac1{\Kz(\Lp q_2)}  \Big(\alpha\frac{d-1}{q_2} - \frac{S_+}{\rho_+} \Big) }{\frac{\Iz (\Lp q_1)}{ \Kz (\Lp q_1)} 
- \frac {\Iz (\Lp q_2)}{\Kz (\Lp q_2)}},
	\\
	c_{d,+}\qud & =
	\frac{  \frac 1{\Iz(\Lp q_2)} \Big(\alpha\frac{d-1}{q_2} - \frac{S_+}{\rho_+} \Big) - \frac1{\Iz(\Lp q_1)}  \Big(-\alpha\frac{d-1}{q_1} - \frac{S_+}{\rho_+} \Big) }{\frac{\Kz (\Lp q_2)}{ \Iz (\Lp q_2)}
- \frac {\Kz (\Lp q_1)}{\Iz (\Lp q_1)}}, \\
	\bmex (q_2) & = \Big( \frac{\Iz (\Lm q_2)}{ \Kz (\Lm q_2)}
+ \frac {\III_1 (\Lm R)}{\KKK_1 (\Lm R)}\Big)^{-1} \Big( \alpha\frac{d-1}{q_2} 
	- \frac{S_-}{\rho_-} \Big) \frac 1 {\Kz (\Lm q_2)} ,
	\\
	\cmex (q_2) & =  \Big( \frac{\KKK_1 (\Lm R)}{ \III_1 (\Lm R)}
+ \frac {\Kz (\Lm q_2)}{\Iz (\Lm q_2)}\Big)^{-1}
	\Big( \alpha\frac{d-1}{q_2} 
	- \frac{S_-}{\rho_-} \Big)\frac 1{\Iz (\Lm q_2)},
\end{align*}
and in Appendix \ref{app:radialshell} we detail the derivation of the following analytical {formulas} for the solution $(\mumin, \mu_+, \mumex)$ to \eqref{rad:shell}
\begin{subequations} \label{def:shell:mu}
\begin{alignat}{3}
	\label{def:shell:muin}
	\mumin (t,r) & =
	\bmin(q_1(t)) \Iz(\Lm r)+ \frac{S_-}{\rho_-}, && \quad \text{ for } r \in (0, q_1(t)),
	\\ \non
	\mu_+ (t,r) & = b_{d,+}(q_1(t),q_2(t)) \Iz(\Lp r) &&
	\\
	\label{def:shell:mupl} & \quad +c_{d,+}(q_1(t),q_2(t)) \Kz(\Lp r)  	
 + \frac{S_+}{\rho_+}, && \quad \text{ for } r \in (q_1(t), q_2(t)),
	\\ \label{def:shell:muext}
	\mumex (t,r) & = \bmex (q_2(t)) \Iz(\Lm r)  
	+	\cmex (q_2(t)) \Kz(\Lm r) + \frac{S_-}{\rho_-}, && \quad \text{ for } r \in (q_2(t),R),
\end{alignat}
\end{subequations}
where we note the resemblance of \eqref{def:shell:mupl} to (C4) of \cite{bauermann2023formation}.  Together with {the} expressions
\begin{align*}
(\mumin)'(r) & =  \Lm \bmin(q_1) \III_1 (\Lm r), \\
\mu_+'(r) & =\Lp b_{d,+}\qud \III_1 (\Lp r) 
	-\Lp c_{d,+} \qud \KKK_1(\Lp r) , \\
(\mumex)'(r) & =\Lm  \bmex (q_2) \III_1 (\Lm r)  
	- \Lm \cmex (q_2) \KKK_1(\Lm r),
\end{align*}
and upon substituting into \eqref{rad:shell:8} and \eqref{rad:shell:9}, {this} yields the ordinary differential equations for the evolving inner radius $q_1(t)$ and outer radius $q_2(t)$:
\begin{subequations}\label{radial:shell:ode}
\begin{alignat}{2}
\notag	2\dot{q_1}  
\notag	 & = - m_+\mu_+'(q_1)
	+m_-(\mumin)'(q_1)
	- S_I\\
\notag	& =  - m_+ \Lp (b_{d,+}\qud \III_1(\Lp q_1) - c_{d,+} \qud \KKK_1 (\Lp q_1)) \\
\notag	& \qquad + m_- \Lm \bmin(q_1) \III_1 (\Lm q_1) - S_1, \\
&=: {\cal H}_{\mathrm{in}}\qud, \label{radial:shell:ode1}
	\\
\notag	  2\dot{q_2} 
\notag	  & =	 - m_+\mu_+'(q_2)
	+m_-(\mumex)'(q_2)
	+ S_I \\
\notag 	& =  - m_+ \Lp ( b_{d,+}\qud \III_1(\Lp q_2) -  c_{d,+} \qud \KKK_1 (\Lp q_2)) \\
\notag	&\quad + m_-\Lm ( \bmex (q_2) \III_1 (\Lm q_2)  
	-  \cmex (q_2) \KKK_1(\Lm q_2))  + S_I \\
&  
	=: {\cal H}_{\mathrm{out}}\qud. \label{radial:shell:ode2}
\end{alignat}
\end{subequations}
Numerically we can identify certain parameter regimes where ${\cal H}_{\mathrm{in}} \qud$ and ${\cal H}_{\mathrm{out}} \qud$ admit roots $(q_1^\star, q_2^\star)$ compatible with the multilayered structure, i.e., $q_1^\star < q_2^\star$.

\subsubsection{Linear stability of radial multilayered solutions}
Let us now examine {perturbations} of a radial multilayered solution around the stationary radii $\qstaru$ and $\qstard$ that are roots of ${\cal H}_{\mathrm{in}}$ and ${\cal H}_{\mathrm{out}}$. We proceed as before and consider perturbed radii of the form 
\[
w_i = q^\star_i + \badeps \rad_i, \quad i=1,2, \quad \badeps \in (0,1),
\]
with a similar structure for $\rad_i$ as for $\rad$ in Section \ref{SEC:RAD:STAB}.
Using again $X(\th,\phi)$ to indicate the standard  parametrization of the $d$ dimensional unit sphere using
the polar and azimuthal angles, we define the perturbed domains by
\begin{align*}
	\Omega_{\rad_1,\badeps}^{-,{\rm in}} & := \{
	x \in \Omega \,\, : \,\,  x = |x| X(\th,\phi), \,
	0<x  < \qstaru + \badeps \rad_1(t,\th,\phi)
	\},
	\\
	\Omega_{\rad_{1},\rad_{2},\badeps}^+ & := \{
	x \in \Omega\,\, : \,\,  x = |x| X(\th,\phi), \,
	\qstaru + \badeps \rad_1(t,\th,\phi)<x  < \qstard + \badeps \rad_2(t,\th,\phi)
	\},
	\\
	\Omega_{\rad_2,\badeps}^{-,{\rm ext}} & := \{
	x \in \Omega \,\, : \,\,  x = |x| X(\th,\phi), \,
	 \qstard + \badeps \rad_2(t,\th,\phi)<x<R
	\}.
\end{align*}
Denoting by $(\mustin, \mustpl, \mustext)$ the solution to the free boundary problem corresponding to the stationary radii $\qudst$, we then express the solutions as 
\begin{align*}
\mu_-^{\rm in}(r,t,\th,\phi) & = \mustin(t,r) + \badeps u_{-}^{\rm in}(r,t,\th,\phi), \\
\mu_+(r,t,\th,\phi) & = \mustpl(t,r) + \badeps u_{+}(r,t,\th,\phi), \\
\mu_-^{\rm ext}(r,t,\th,\phi) & = \mu_-^{\star, {\rm ext}}(t,r) + \badeps u_{-}^{\rm ext}(r,t,\th,\phi),
\end{align*}
and demand that these solve the free boundary problem {\eqref{SharpI}} on the perturbed domains:
\begin{subequations}
\begin{alignat}{2}
		\label{lin:shell:1:lin}
		- m_- \Delta (\mustin + \badeps u_{-}^{{\rm in}}) & = S_- - \rho_- (\mustin+ \badeps u_{-}^{\rm in}) \qquad &&\text{in $\Omega_{\rad_1,\badeps}^{-,{\rm in}}$,}
	\\	\label{lin:shell:2:lin}
		- m_+ \Delta (\mustpl+ \badeps u_{+}) &= S_+ - \rho_+ (\mustpl+ \badeps u_{+}) \qquad &&\text{in $\Omega_{\rad_1,\rad_2,\badeps}^+$,}
	\\
		\label{lin:shell:3:lin}
		- m_- \Delta (\mustext+ \badeps u_{-}^{{\rm ext}}) &= S_- - \rho_- (\mu_-^\star+ \badeps u_{-}^{{\rm ext}}) \qquad &&\text{in $\Omega_{\rad,\badeps}^{-,{\rm ext}}$,}
	\\
	\label{lin:shell:4:lin}
	 \mustin + \badeps u_-^{\rm in} & =  \alpha \kappa_1= \mustpl + \badeps u_+ 
	\qquad && \text{on $\{r=w_1\}$,}
	\\
		\label{lin:shell:5:lin}
	 \mustpl + \badeps u_+ &= \alpha \kappa_2= \mustext + \badeps u_-^{\rm ext} 
	\qquad && \text{on $\{r=w_2\}$,}
	\\
	\label{lin:shell:6:lin}
	 - 2 {\cal V}_1 &= \jump{m \nabla (\mu + \badeps u)} \cdot {\bnu}_1
	+ S_I
	\qquad && \text{on $\{r=w_1\}$,}
	\\
	\label{lin:shell:7:lin}
	 - 2 {\cal V}_2 & = \jump{m \nabla (\mu + \badeps u)} \cdot {\bnu}_2
	+ S_I
	\qquad && \text{on $\{r=w_2\}$,}
	\\
	\label{lin:shell:8:lin}
	(\mustext + \badeps u_{-}^{{\rm ext}})'(R) & =0
	\qquad && \text{on $\{r=R\}${.}}
\end{alignat}
\end{subequations}
Linearizing the above equations about the stationary interfaces $\{r = q^\star_i\}$, $i=1,2$, while using the expansions
\begin{align*}
	\mustin(w_1) &   =
	\mustin(\qstaru)
	+ (\mustin)'|_{r=\qstaru} (w_1-\qstaru) + \text{ h.o.t.} \\
	&=
	\mustin(\qstaru)
	+ \badeps (\mustin)'|_{r=\qstaru} Y_1 +\text{ h.o.t.},
	\\
	\mu_+^{\star}(w_1) &   =
	\mu_+^{\star}(\qstaru)
	+ (\mustpl)'|_{r=\qstaru} (w_1-\qstaru) + \text{ h.o.t.}\\
	&=
	\mu_+^{\star}(\qstaru)
	+ \badeps (\mu^{\star}_+)'|_{r=\qstaru} Y_1 +\text{ h.o.t.},
	\\
	\mu_+^{\star}(w_2) &   =
	\mu_+^{\star}(\qstard)
	+ (\mustpl)'|_{r=\qstard} (w_2-\qstard) + \text{ h.o.t.}\\
	&=
	\mu_+^{\star}(\qstard)
	+ \badeps (\mu^{\star}_+)'|_{r=\qstard} Y_2 +\text{ h.o.t.},
	\\
		\mu_-^{\star, {\rm ext}}(w_2) &   =
	\mu_-^{\star, {\rm ext}}(\qstard)
	+ (\mu^{\star, {\rm ext}}_-)'|_{r=\qstard} (w_2-\qstard) + \text{ h.o.t.}\\
	&=
	\mu_-^{\star, {\rm ext}}(\qstard)
	+ \badeps (\mu^{\star, {\rm ext}}_-)'|_{r=\qstard} Y_2 +\text{ h.o.t.},
\end{align*}
we obtain the following system for $\rad_1$, $\rad_2$, $u^{\rm in}_-$, $u_{+}$ and $u^{\rm ext}_-$:
\begin{equation*}
\begin{alignedat}{2}
	- m_- \Delta  \uin &=  - \rho_- \uin \qquad &&\text{in $(0, \qstaru)$,}
	\\
	- m_+ \Delta  \upl &=  - \rho_+ \upl \qquad &&\text{in $(\qstaru,\qstard)$,}
	\\
	- m_- \Delta \uext  &= - \rho_- \uext  \qquad &&\text{in $(\qstard, R)$,}
	\\
 	 (\mustpl)' Y_1  + \upl  &= (\mustin)' Y_1  + \uin = {\alpha} 	\frac {d-1}{(\qstaru)^2} \big( \rad_1 + \frac 1{d-1} \Delta_{{\cal S}^{d-1}} \rad_1 \big) \quad && \text{on $\{r = \qstaru\}$,} 
	\\
 	 (\mustpl)' Y_2  + \upl
	 &= (\mustext)' Y_2  + \uext =  - {\alpha} 	\frac {d-1}{(\qstard)^2} \big( \rad_2 + \frac 1{d-1} \Delta_{{\cal S}^{d-1}} \rad_2 \big)\quad && \text{on $\{r = \qstard\}$,} \\
	  2 \dot{Y_1}  &= -
	m_+(\mustpl)'' \rad_1
	+m_-(\mustin)'' \rad_1
	- m_+ (\upl)' + m_- (\uin)'
	\qquad && \text{on $\{r=\qstaru\}$,}
	\\
	 2 \dot{Y_2}  &=
	-m_+(\mustpl)'' \rad_2
	+m_-(\mustext)'' \rad_2
	- m_+ (\upl)'
	+ m_- (\uext)'
	\qquad && \text{on $\{r=\qstard\}$,}
	\\
	 (\uext)'  & =0
	\qquad &&
	\text{on $\{r=R\}$.}
\end{alignedat}
\end{equation*}
Here, $\Delta_{{\cal S}^{d-1}} $ denotes the Laplace--Beltrami operator on the $(d-1)$-dimensional unit sphere ${\cal S}^{d-1}$ and  we employed the same linearization as before for the mean curvature operator around a sphere. Moreover, we impose the additional {condition} at the origin
\begin{align}
	\label{pertrad:shell:initial}
	\uin (r=0)<\infty.
\end{align}
We now proceed with the ansatz
\begin{align}
	\label{eq:y12}
	\rad_i (t,\th,\phi)  =   y_{i}  \delta(t)Z(\th,\phi),\quad  i=1,2,
\end{align}
for some $y_1, y_2 \in \RRR$ and $Z(\th,\phi)$ is chosen as in \eqref{perturb2d} or \eqref{perturb3d} for $\ell \in \enne \cup \{ 0 \}$ and $ k \in \{-\ell , ..., \ell \}$. With the above ansatz we require $\uin, \upl,$ and $\uext$  to assume the form
\begin{align*}
	  \uin (r,t, \th,\phi) & =   \Uin (r)  \delta(t)Z(\th,\phi),\\
	 \upl (r,t, \th,\phi)   & =     \Upl(r) \delta(t)Z(\th,\phi),
	 \\
	 \uext (r,t, \th,\phi)& =     \Uext(r) \delta(t)Z(\th,\phi).
\end{align*}
Upon recalling \eqref{Delta:rad} and \eqref{lap}, we focus on solving the following system
\begin{subequations}\label{Linstabshell}
\begin{alignat}{2}
	\label{linstab:shell:1}
	&
	- m_- \Big( (\Uin)'' + \frac {d-1}r (\Uin)' + \frac {\zeta_{\ell,d}}{r^2}  \Uin \Big)=  - \rho_-  \Uin  &&\text{ in $(0, \qstaru)$,}
	\\
	\label{linstab:shell:2}
	&
	- m_+ \Big( (\Upl)''  + \frac {d-1}r (\Upl)'+ \frac {\zeta_{\ell,d}}{r^2}  \Upl \Big)=  - \rho_+  \Upl  &&\text{ in $(\qstaru, \qstard)$,}
	\\
	\label{linstab:shell:3}
	&
	- m_- \Big( (\Uext)''  + \frac {d-1}r (\Uext)' + \frac {\zeta_{\ell,d}}{r^2} \ \Uext \Big)=  - \rho_-  \Uext  &&\text{ in $(\qstard, R)$,}
	\\
	\label{linstab:shell:4}
	&   (\mustpl)' y_1
	+ \Upl
 = \alpha \frac {d-1}{r^2} \Big( 1 + \frac {\zeta_{\ell,d}}{d-1}\Big) y_1
	=  (\mustin)' y_1
	+ \Uin   && \text{ on $\{r=\qstaru\}$,}
	\\
	\label{linstab:shell:5}
	&  (\mustpl)' y_2
	+ \Upl   = -\alpha \frac {d-1}{r^2} \Big( 1 + \frac {\zeta_{\ell,d}}{d-1}\Big) y_2
	=  (\mustext)' y_2
	+ \Uext 
	 && \text{ on $\{r=\qstard\}$,}
	\\
	\label{linstab:shell:6}
	& \frac{ 2\dot{\delta}}{\delta} y_1
	=
	-\big(m_+(\mustpl)''
	- m_-(\mustin)'' \big) y_1
	-   \big(m_+(\Upl)' - m_-(\Uin)'{\big)}
	 && \text{ on $\{r=\qstaru\}$,}
	\\
	\label{linstab:shell:7}
	& \frac{2\dot{\delta}}{\delta} y_2
	=
	-\big(m_+(\mu^\star_+)''
	- m_-(\mustext)'' \big) y_2
	-   \big(m_+(\Upl)' -m_-(\Uext)'{\big)}
	 && \text{ on $\{r=\qstard\}$,}
	\\
	\label{linstab:shell:8}
	& (\Uext)' =0
	 && \text{ on $\{r=R\}$,} 
\end{alignat}
\end{subequations}
and 
\begin{align}
 \label{linstab:shell:9} & \Uin(r = 0) < \infty.
\end{align}
As a consequence of \eqref{linstab:shell:4} and \eqref{linstab:shell:5}, we also have
\begin{alignat}{2}
	\label{linstab:shell:extra:1}
	\Upl  -  \Uin &= (\mustin)'  y_1
	- (\mustpl)' y_1
	\quad  &&\text{on $\{r=\qstaru\}$,}
	\\
		\label{linstab:shell:extra:2}
	\Upl  -  \Uext &=(\mustext)'  y_2
	- (\mustpl)' y_2
	\quad  &&\text{on $\{r=\qstard\}$.}
\end{alignat}

\subsubsection{Radial multilayered solutions to the perturbed system on finite domains}
Let us now move to solving {the} system {\eqref{Linstabshell}} for finite domains, i.e., $R< \infty$. In Appendix {\ref{app:radialshell:perturb}} we detail the derivation of the following analytical {formulas} for $\Uin, \Upl, \Uext$. For $r \in (0,\qstaru)$:
\begin{align*}
\Uin(r) = \Big(\alpha \frac {d-1}{(\qstaru)^2} \Big( 1 + \frac {\zeta_{\ell,d}}{d-1}\Big)
	- \Lm  b_{d,-}^{{\rm in}}(\qstaru) \III_1(\Lm \qstaru)\Big) \frac {y_1} { \Iell(\Lm  \qstaru)} \Il(\Lm r){,}
\end{align*}
while for $r \in (\qstaru, \qstard)$:
\begin{align*}
\Upl(r) & = \frac  { y_1 F_1 \qudst\Kell(\Lp\qstard) -    y_2 F_2 \qudst\Kell(\Lp\qstaru)}{\Iell(\Lp \qstaru)\Kell(\Lp \qstard)-\Iell(\Lp \qstard)\Kell(\Lp \qstaru)} \Il(\Lp r) \\
& \quad + \frac  {\Iell(\Lp\qstaru) y_2 F_2\qudst -   \Iell (\Lp \qstard) y_1 F_1 \qudst}{\Iell(\Lp \qstaru)\Kell(\Lp \qstard)-\Iell(\Lp \qstard)\Kell(\Lp \qstaru)} \Kl(\Lp r),
\end{align*}
where
\begin{align*}
 F_1\qudst & = 
\Big(\alpha \frac {d-1}{(\qstaru)^2} \Big( 1 + \frac {\zeta_{\ell,d}}{d-1}\Big)
	-\Lp  b_{d,+} \qudst\III_1 (\Lp \qstaru)
	+ \Lp  c_{d,+} \qudst\KKK_1(\Lp\qstaru)	
	\Big),
	\\
F_2\qudst & =
	\Big(-\alpha \frac {d-1}{(\qstard)^2} \Big( 1 + \frac {\zeta_{\ell,d}}{d-1}\Big)
	-\Lp  b_{d,+} \qudst\III_1 (\Lp \qstard)
	+ \Lp  c_{d,+} \qudst\KKK_1(\Lp\qstard)\Big) ,
\end{align*}
and for $r \in (\qstard, R)$:
\begin{align*}
\Uext(r) & =  y_2 \Big(- \alpha \frac {d-1}{(\qstard)^2} \Big( 1 + \frac {\zeta_{\ell,d}}{d-1}\Big)
	- \Lm  b_{d,-}^{{\rm ext}} (\qstard)\III_1(\Lm \qstard)
	+ \Lm  c_{d,-}^{{\rm ext}} (\qstard)\KKK_1(\Lm \qstard) 
	\Big) \\
	& \quad \times \left [ \frac{\Iell(\Lm r)}{\Iell(\Lm  \qstard)  - \frac{\Il' \LmR}{\Kl' \LmR} \Kell(\Lm  \qstard)} + \frac{\Kell(\Lm r)}{ \Kell(\Lm  \qstard)  -  \frac{\Kl' \LmR}{\Il' \LmR} \Iell(\Lm  \qstard)} \right ].
\end{align*}
Recalling the analytical {formulas \eqref{def:shell:mu}} for $\mustin, \mustpl, \mustext$, with the above expressions for $\Uin, \Upl, \Uext$ we can express the ordinary differential system \eqref{linstab:shell:6} and \eqref{linstab:shell:7} as
\begin{align}\label{shell:perturbation matrix}
	& 2 \dot{\delta} 
	\begin{pmatrix}
	y_{1 }\\
	y_{2 }
	\end{pmatrix}
	=
	\delta \begin{pmatrix}
	A_{11}  & A_{12} \\
	A_{21} & A_{22} 
	\end{pmatrix}
	\begin{pmatrix}
	y_{1 }\\
	y_{2 }
	\end{pmatrix}
	 = \delta \mathbf{A} \begin{pmatrix}
	y_{1 }\\
	y_{2 }
	\end{pmatrix},
\end{align}
where the entries of the matrix $\mathbf{A}$ can be found in \eqref{app:radialshell:matrix}. For a fixed $\ell$, if either one of the two eigenvalues of $\mathbf{A}$ is positive, then instability of the radial multilayered structure can be observed. In contrast to the flat multilayered setting, the above matrix is not symmetric. We can numerically identify parameter regions where $\mathbf{A}$ exhibits at least one positive eigenvalue for a specific value of $\ell$.

\renewcommand{\thefootnote}{\arabic{footnote}}
\section{Numerical Computations}
\newcommand{\nabs}{\nabla_{\!s}}
\newcommand{\id}{{\rm id}}
\renewcommand{\vec}{\boldsymbol}

In this section{,} we present some numerical simulations for the 
moving free boundary problem \eqref{SharpI}. To this end, we will first
introduce an appropriate weak formulation, which is then discretized with the
help of piecewise linear finite elements. Here we will employ an unfitted
description of the moving interface. That is, the finite element triangulations
for the bulk $\Omega$ and for the approximation of the interface 
$\Sigma(t)$ are completely independent.
Finally, we will present several numerical computations in 2d and 3d.

The governing equations \eqref{SharpI} are very close to the 
Mullins--Sekerka problems studied in \cite{dendritic,crystal}. Here we will
adapt these approaches to the novel aspects encountered in \eqref{SharpI}:
the kinetic terms $\rho_\pm \mu$ in \eqref{sharpI:1:p}, \eqref{sharpI:1:m},
the phase-dependent diffusion coefficients $m_\pm$ and forcing terms $S_\pm$ in 
\eqref{sharpI:1:p}, \eqref{sharpI:1:m} and
the forcing term $S_I$ in \eqref{sharpI:4}.

We begin by introducing a natural variational formulation. Multiplying
\eqref{sharpI:1:p}, \eqref{sharpI:1:m} with smooth test functions, integrating
over $\Omega^\pm$, performing integration by parts and noting
\eqref{sharpI:2}, \eqref{sharpI:4} yields:
Given $\Sigma(0)$, with $\partial\Sigma(0) \subset\partial\Omega$,
for $t > 0$ find $\Sigma(t)$, with $\partial\Sigma(t) \subset\partial\Omega$,
and separating $\Omega$ into $\Omega^+(t)$ and $\Omega^-(t)$, as well as
$\mu \in H^1(\Omega)$ and $\kappa \in L^2(\Sigma(t))$ such that
\begin{subequations}
\label{eq:2}
\begin{align}
& \left(m \nabla\mu, \nabla\varphi\right) 
+ \left(\rho\mu, \varphi\right)
- 2\left\langle \mathcal{V},\varphi\right\rangle_{\Sigma(t)} 
=\left(S, \varphi\right)+S_I \left\langle 1,\varphi\right\rangle_{\Sigma(t)}
\quad\forall \varphi\in H^1(\Omega),\label{eq:2aa} \\ &
 \left\langle \mu - \alpha\kappa , \chi \right\rangle_{\Sigma(t)} = 0 
\quad\forall \chi \in L^2(\Sigma(t))
,\label{eq:2bb} \\
& \left\langle \kappa\bnu,\vec\eta \right\rangle_{\Sigma(t)} 
+ \left\langle \nabs\vec\id,\nabs\vec\eta 
\right\rangle_{\Sigma(t)} = 0 
\quad\forall \vec\eta \in [H^1(\Sigma(t))]^d \text{ with }
\vec\eta \cdot \vec n = 0 \text{ on } \partial\Sigma(t), \label{eq:2cc}
\end{align}
\end{subequations}
where we have also recalled a weak formulation for the curvature $\kappa$,
see, e.g., \cite[Remark~22]{bgnreview}. In addition, we let $(\cdot,\cdot)$ and
$\langle\cdot,\cdot\rangle_{\Sigma(t)}$ denote the $L^2$--inner products 
in $\Omega$ and on $\Sigma(t)$, respectively. 
Moreover, we have introduced the notation
$m{(t,\cdot)} = m_+ \charfcn{\Omega^+(t)} + m_- \charfcn{\Omega^-(t)}$, 
and similarly for $\rho{(t,\cdot)}$ and $S{(t,\cdot)}$, where
$\charfcn{\Omega^\pm(t)}$ denotes the characteristic function on 
$\Omega^\pm(t)$.
Finally, $\vec\id$ denotes the
identity function in $\mathbb R^d$.

\newcommand{\sigmaO}{o}
\newcommand{\SmD}{\mathcal S^n}
\newcommand{\Whm}{V(\Sigma^n)}
\newcommand{\Vhm}{\underline{V}(\Sigma^n)}
\newcommand{\Vhmd}{\underline{V}_\partial(\Sigma^n)}
\newcommand{\bB}{\mathbb{B}}
Let $\Delta t > 0$ be a chosen time step size, and let $t_n = n\Delta t$,
$n \geq 0$.
Let $\Omega$ be a polyhedral domain. For $n\geq0$, let ${\cal T}^n$ 
be a regular partitioning of $\Omega$ into disjoint open simplices, so that 
$\overline{\Omega}=\cup_{\sigmaO\in{\cal T}^n}\overline{\sigmaO}$. 
Associated with ${\cal T}^n$ is the finite element space
$\SmD := \{\chi \in C^0(\overline{\Omega}) : \chi\!\mid_{\sigmaO} 
\text{ is affine } \ \forall \sigmaO \in {\cal T}^n\}$.

Let $\Sigma^{n}$ be a polyhedral hypersurface, see
\cite[Definition~41]{bgnreview}, approximating the
surface $\Sigma(t_n)$, $n\geq0$.
In particular, let $\Sigma^n=\bigcup_{j=1}^{J^n_\Sigma} 
\overline{\sigma^n_j}$,
where $\{\sigma^n_j\}_{j=1}^{J^n_\Sigma}$ is a family of mutually disjoint open 
$(d-1)$-simplices, $J^n_\Sigma \in \enne$. 
Then let
\begin{equation*}
\Vhm := \{\vec\chi \in [C^0(\Sigma^n)]^d:\vec\chi\!\mid_{\sigma^n_j}
\mbox{ is affine}\ \forall\ j=1,\ldots, J^n_\Sigma\} 
=: [\Whm]^d,
\end{equation*}
where $\Whm \subset H^1(\Sigma^n)$ is the space of scalar continuous
piecewise linear functions on $\Sigma^n$, and define
\[
\Vhmd := \{ \vec\chi \in \Vhm : \vec\chi \cdot \vec n = 0 \text{ on }
\partial\Sigma^n \}.
\]
For later purposes, we also introduce 
$\pi_{\Sigma^n}: C^0(\Sigma^n)\to \Whm$, the standard Lagrange
interpolation operator.

As in the continuous case, we let $\langle\cdot,\cdot\rangle_{\Sigma^n}$ denote
the $L^2$--inner product on $\Sigma^n$. In addition,
for scalar and vector valued functions $v,w$ that are piecewise continuous, 
with possible jumps across the edges of $\{\sigma_j^n\}_{j=1}^{J^n_\Sigma}$,
we introduce the mass lumped inner product
$\langle\cdot,\cdot\rangle_{\Sigma^n}^h$ as
\begin{equation*}
\langle v, w \rangle^h_{\Sigma^n} :=
\tfrac1d \sum_{j=1}^{J^n_\Sigma} |\sigma^n_j|\sum_{k=1}^{d} 
(v\cdot w)((\vec{q}^n_{j_k})^-),
\end{equation*}
where $\{\vec{q}^n_{j_k}\}_{k=1}^{d}$ are the vertices of $\sigma^n_j$,
and where we define $v((\vec{q}^n_{j_k})^-):=
\underset{\sigma^n_j\ni \vec{p}\to \vec{q}^n_{j_k}}{\lim}\, v(\vec{p})$.

Given $\Sigma^n$, we 
let $\Omega^n_+$ denote the interior of $\Sigma^n$ and let
$\Omega^n_-$ denote the exterior of $\Sigma^n$, so that
$\Sigma^n = \partial \Omega^n_+ = \overline\Omega^n_+ \cap 
\overline\Omega^n_-$. 
We then partition the elements of the bulk mesh 
$\mathcal{T}^n$ into interior, exterior and interfacial elements as follows.
Let
\begin{align}
\mathcal{T}^n_- & := \{ o \in \mathcal{T}^n : o \subset
\Omega^n_- \} , \qquad 
\mathcal{T}^n_+ := \{ o \in \mathcal{T}^n : o \subset
\Omega^n_+ \} , \nonumber \\
\mathcal{T}^n_{\Sigma^n} & := \{ o \in \mathcal{T}^n : o \cap
\Sigma^n \not = \emptyset \} . \label{eq:partT}
\end{align}
Clearly $\mathcal{T}^n = \mathcal{T}^n_- \cup \mathcal{T}^n_+ \cup
\mathcal{T}^n_{\Sigma^n}$ is a disjoint partition, which in practice
can easily be found{,} e.g.{,} with the Algorithm~4.1 in \cite{crystal}.
In addition, we let $\vec{\nu}^n$ denote the piecewise constant unit normal 
to $\Sigma^n$, pointing into $\Omega^n_+$. We also
introduce the vertex normal vector $\vec\omega^{n}\in \Vhm$ as
the mass-lumped $L^2$--projection of $\vec\nu^n$ onto $\Vhm$, that is
\begin{equation} \label{eq:nuhomegah}
\langle \vec\omega^n, \vec\eta \rangle^h_{\Sigma^n} = 
\langle \vec\nu^n, \vec\eta \rangle_{\Sigma^n}
\quad\forall\vec\eta\in \Vhm.
\end{equation}
On recalling \eqref{eq:partT}, we introduce the piecewise constant functions
$\rho_h^n$, $m_h^n$ and $S_h^n$ via
\begin{equation} \label{eq:rhoma}
\rho_h^n\!\mid_{o} = \begin{cases}
\rho_- & o \in \mathcal{T}^n_-, \\
\rho_+ & o \in \mathcal{T}^n_+, \\
\tfrac12(\rho_- + \rho_+) & o \in \mathcal{T}^n_{\Sigma^n},
\end{cases}
\ \text{ and}\quad
m_h^n\!\mid_{o} = \begin{cases}
m_- & o \in \mathcal{T}^n_-, \\
m_+ & o \in \mathcal{T}^n_+, \\
\tfrac12(m_- + m_+) & o \in \mathcal{T}^n_{\Sigma^n},
\end{cases}
\end{equation}
and analogously for $S_h^n$.

In addition to $\Sigma^n$, we will denote by $\mu_h^n$, and $\kappa_h^n$ 
the discrete approximations to $\mu(t_n)$ and $\kappa(t_n)$,
respectively.
We will parameterize the new discrete surface 
$\Sigma^{n+1}$ over $\Sigma^n$, with the help of a parameterization
$\vec{X}^{n+1} \in \Vhm$.

Our finite element scheme to approximate \eqref{eq:2} can then be formulated as
follows:
Given $\Sigma^0$, for $n\geq0$, find $(\mu_h^{n+1},\vec X^{n+1},\kappa_h^{n+1})
\in \SmD \times \Vhm \times \Whm$ such that 
$\vec X^{m+1} - \vec\id\!\mid_{\Sigma^n} \in \Vhmd$ and
\begin{subequations} \label{eq:MSHG}
\begin{align}
& \left(m_h^n\nabla\mu_h^{n+1}, \nabla\varphi\right) 
+\left(\rho_h^n \mu_h^{n+1}, \varphi \right)^h
- 2 \left\langle \pi_{\Sigma^n}\left[
\frac{\vec X^{n+1}-\vec\id}{\Delta t} \cdot \vec\omega^n \right], \varphi
\right\rangle_{\Sigma^n} 
\nonumber \\ & \hspace{6cm}
= \left(S_h^{n}, \varphi\right)^h
+S_I\left\langle 1, \varphi \right\rangle_{\Sigma^n}
\quad\forall \varphi \in \SmD, \label{eq:MSHGa}\\
& \left\langle \mu_h^{n+1}, \chi \right\rangle_{\Sigma^n} 
- \alpha \left\langle \kappa_h^{n+1}, 
\chi\right\rangle_{\Sigma^n}^h =0  \quad\forall \chi \in \Whm
, \label{eq:MSHGb} \\
& \left\langle \kappa_h^{n+1}\vec\nu^n, \vec\eta \right\rangle_{\Sigma^n}^h + 
\left\langle \nabs\vec X^{n+1}, \nabs\vec\eta \right\rangle_{\Sigma^n}
= 0 \quad\forall \vec\eta \in \Vhmd,
 \label{eq:MSHGc} 
\end{align}
\end{subequations}
and set $\Sigma^{n+1} = \vec X^{n+1}(\Sigma^n)$. The linear system
\eqref{eq:MSHG} is closely related to \cite[(119)]{bgnreview}. In fact, under
a mild assumption on the compatibility between the bulk mesh $\mathcal T^n$
and the surface mesh $\Sigma^n$, it is 
straightforward to prove existence and uniqueness of a solution to
\eqref{eq:MSHG}, see the proof of Theorem~109 in \cite{bgnreview} for details.

We implemented the scheme \eqref{eq:MSHG} with the help of the finite element 
toolbox ALBERTA, see \cite{Alberta}.
To increase computational efficiency, we employ adaptive bulk meshes
that feature small elements in $\mathcal T^n_{\Sigma^n}$ and coarser elements
in $\mathcal T^n_\pm$, recall \eqref{eq:partT}. In addition, due to the growth
in time of the discrete interfaces $\Sigma^n$, we adaptively refine 
$\Sigma^n$, $n\geq1$, in regions where the elements have grown beyond a certain
threshold compared to the largest element in $\Sigma^0$. We refer to
\cite{dendritic} for the precise details. For some simulations in
3d topological changes need to be applied to $\Sigma^{n+1}$, to avoid
self-intersection and to allow for pinch-offs, for example. For these
topological changes we employ the algorithm from \cite{BenninghoffG17},
see also \cite{Benninghoff15}. In addition, for simulations where topological
changes are necessary, we also improve the mesh quality by applying a mesh
smoothing step as described in \cite[(2.19), (2.18)(iii)]{willmore} 
after each time step.

For the solution of the linear systems of equations arising from 
\eqref{eq:MSHG} we use the direct factorization package UMFPACK, 
see \cite{Davis04}, in 2d, and employ a Schur complement approach that is
solved with a preconditioned conjugate gradient method in 3d, see
\cite{dendritic} for details. Some further information on the implementation
and solution of systems like \eqref{eq:MSHG} can also be found in
Remarks~107 and 111 in \cite{bgnreview}.

\subsection{Numerical simulations for flat interfaces}

We begin with a numerical simulation for a known exact solution to
\eqref{SharpI}, that is, a moving flat front in a rectangular domain in $2d$.
In particular, we let $\Omega = (0,8)^2$ and choose the 
parameters $\alpha=0.1$, $m_\pm=1$, $\rho_\pm = 1$, $S_\mp=\pm1$, and $S_I=0$.
As the initial interface $\Sigma(0)$ we choose a flat vertical line at
positions $q(0) = 0.3$ and $q(0) = 7.7$, respectively, with its normal pointing
to the left.
A comparison between $q(t)$ (which is the solution to the ordinary differential equation (5.8) of \cite{Active_drops}) and the positions $q_h(t_n)$ of the discrete
interfaces $\Sigma^n$, where
\[
q_h(t_n) = \max\{x : (x,y)^\top \in \Sigma^n \},
\]
is shown in Figure~\ref{fig:flat1}. We observe an excellent agreement.
\begin{figure}
\center
\includegraphics[angle=-90,width=0.45\textwidth]{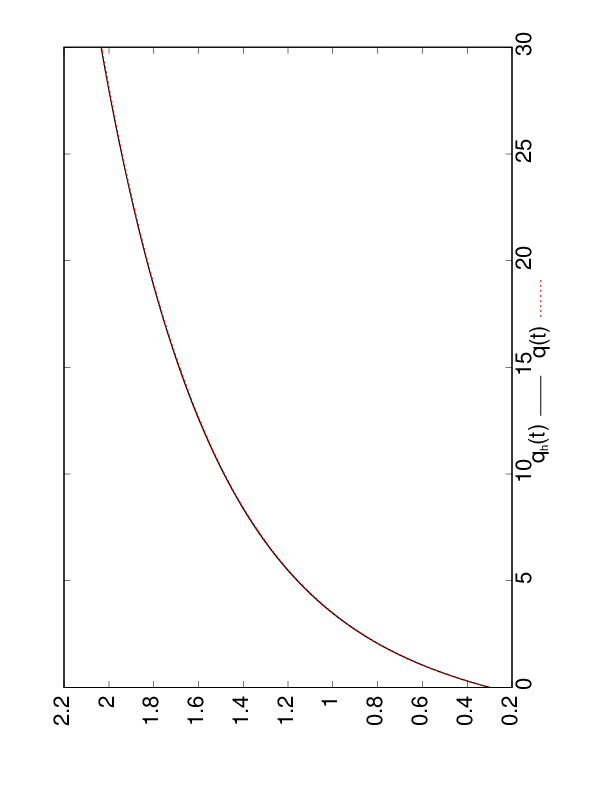}
\includegraphics[angle=-90,width=0.45\textwidth]{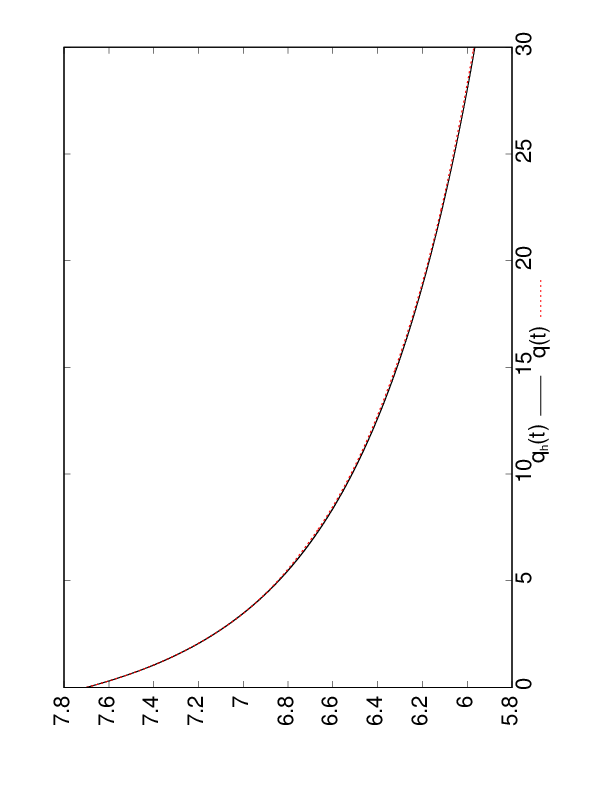}
\caption{($\Omega = (0,8)^2$)
Moving front for $\alpha=0.1$, $m_\pm=1$, $\rho_\pm = 1$, 
$S_\mp=\pm1$, $S_I=0$ with $q(0)=0.3$ (left) and $q(0)=7.7$ (right).
We plot $q(t)$ and $q_h(t)$ over time.
}
\label{fig:flat1}
\end{figure}%

Next we investigate the stability of an initially flat interface. To this end, 
we choose $\Omega = (0,8)^2$ and let
$\alpha=0.2$, $m_\pm=0.1$,
$\rho_\pm=0.1$, $S_\mp=\pm1$, $S_I=0$.
For the initial interface $\Sigma(0)$ we choose a vertical line
at position ${q(0) = 4}$ with an added perturbation. In fact, we let
\[
\Sigma(0) = \Big{\{} \Big{(} {4+}\sum_{\ell=1}^{20} \sigma_\ell \cos \Big (\frac{\ell\pi y}8 \Big ), y \Big)^\top : y
\in [0,8] \Big{\}},
\]
with $|\sigma_\ell| < 0.01$, $\ell=1,\ldots,20$, {as} some random coefficients.
The evolution shown in Figure~\ref{fig:flatunstable2}
indicates that the mode $\ell=10$ is growing the fastest and eventually
dominates. This observation is supported by Table~\ref{tbl:flatunstable2} where we list the values of the perturbation amplification factor (5.13) in \cite{Active_drops}, playing an analogous role to the right-hand side of \eqref{strucperturb}, corresponding to perturbation modes $\ell = 1, \dots, 20$ for the parameter setting of Figure~\ref{fig:flatunstable2}. We see that mode $\ell = 10$ exhibits the largest amplification of the perturbations.
\begin{table}[h]
\centering
\begin{tabular}{|c|c|c|c|c|c|}
\hline
$\ell$ & $1$ & $2$ & $3$ & $4$ & $5$ \\
\hline
&  0.1398   & 0.5098   & 1.0027  &  1.5379  &  2.0642   \\
\hline\hline
$ \ell$ & $6$ & $7$ & $8$ & $9$ & $10$ \\
\hline
&  2.5474  &  2.9622  &  3.2878  &  3.5059   & \textbf{3.5995} \\
\hline\hline
$\ell$ & 11 & 12 & 13 & 14 & 15  \\
\hline
&3.5525  &  3.3491  &  2.9742  &  2.4125   & 1.6491   \\
\hline\hline
$\ell$ & 16 & 17 & 18 & 19 & 20 \\
\hline
&  0.6691 & $ -0.5422$ &  $ -1.9995$  & $ -3.7177$ &   $-5.7112$ \\
\hline
\end{tabular}
\caption{Values of the amplification factor (5.13) in \cite{Active_drops} corresponding to perturbation modes $\ell = 1, \dots, 20$ for the setting of Figure~\ref{fig:flatunstable2}. The largest value highlighted in bold corresponds to mode $\ell = 10$.}
\label{tbl:flatunstable2}
\end{table}

\begin{figure}
\center
\includegraphics[angle=-90,width=0.45\textwidth]{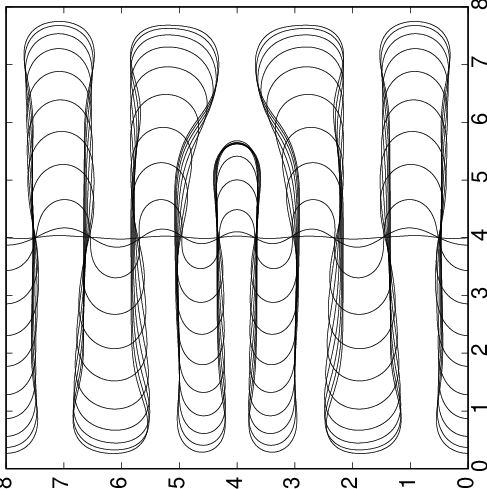} 
\includegraphics[angle=-90,width=0.45\textwidth]{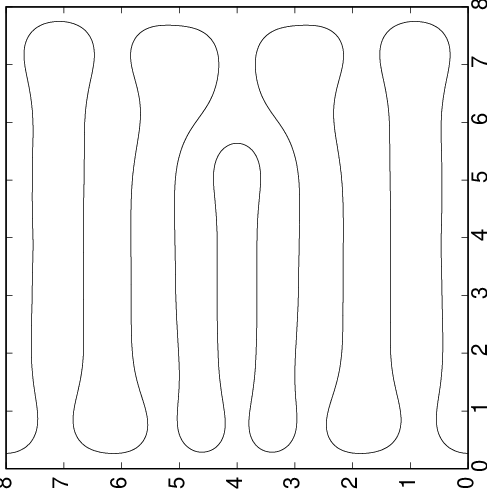} 
\caption{($\Omega = (0,8)^2$) $\alpha=0.2$, $m_\pm=0.1$,
$S_\mp=\pm1$, $\rho_\pm=0.1$, $S_I=0$.
Evolution for a perturbed flat interface at position ${q(0) = 4}$.
We show the solution at times $t=0,1,\ldots,10$, and separately at time $t=10$.}
\label{fig:flatunstable2}
\end{figure}%

Next we consider the evolution of multilayered phases. Hence $\Sigma(0)$ is
made up of two connected components. For a simulation inside the domain
$\Omega = (0,8)^2$ we choose $\Omega^+(0) = (3.7, 4.3) \times (0, 8)$
and $\Omega^+(0) = (0.9, 6.1) \times (0, 8)$, respectively, with
$\Sigma(0) = \Omega\cap\partial\Omega^+$. We then track the positions of the
two discrete interfaces,
\[
q_h(t_n) = \big ( \min \{x : (x,y)^\top \in \Sigma^n \},
\max\{x : (x,y)^\top \in \Sigma^n \} \big ),
\]
and compare them with the exact solution obtained from solving the ODE system {\eqref{q12dot:planar}}
in Figure~\ref{fig:strip2a0011}.
Once again, we notice an excellent agreement between discrete and continuous
solution.
\begin{figure}
\center
\includegraphics[angle=-90,width=0.45\textwidth]{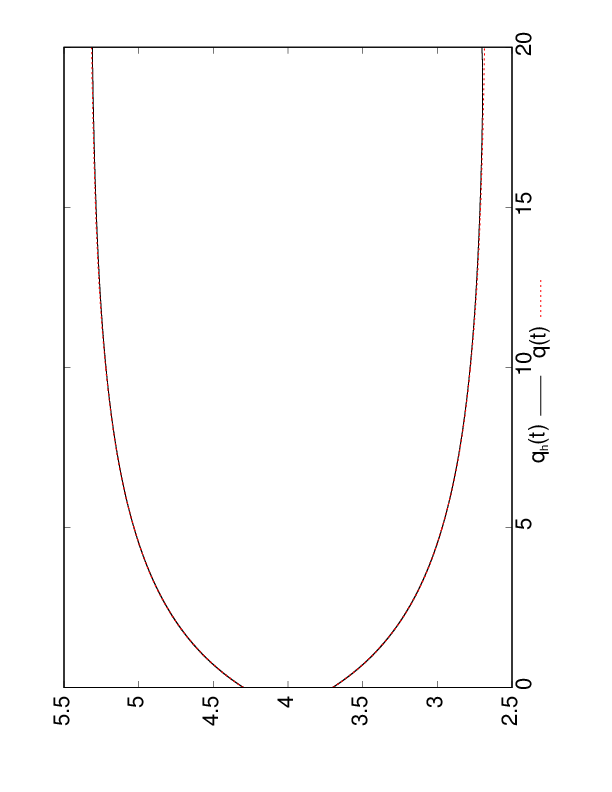} \qquad
\includegraphics[angle=-90,width=0.45\textwidth]{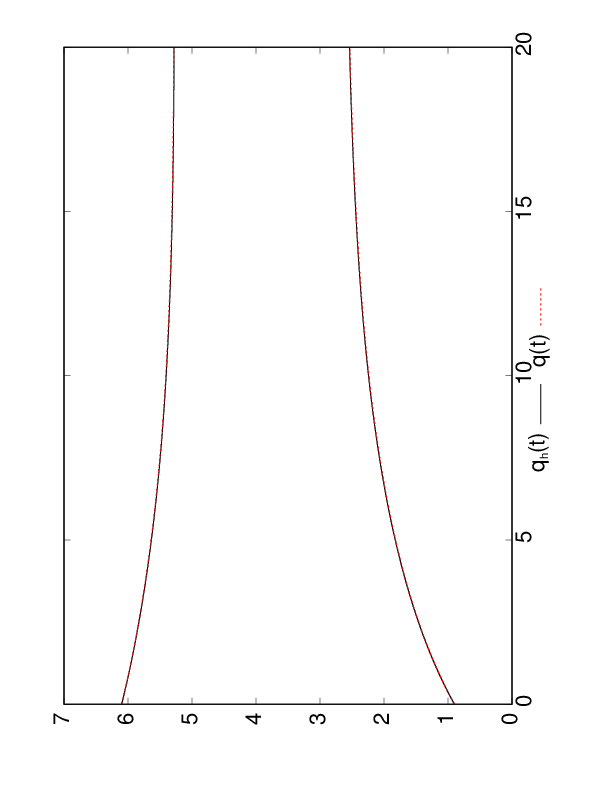}
\caption{($\Omega = (0,8)^2$) $\alpha=0.011$, $m_-=4$, $m_+=1$, $\rho_\pm=1$,
$S_-=0.5$, $S_+=-1$, $S_I=0$
with $q(0)=(3.7, 4.3)$ (left) and $q(0)=(0.9, 6.1)$ (right).
We plot $q(t)$ and $q_h(t)$ over time.
}
\label{fig:strip2a0011}
\end{figure}%

We are interested in the stability behaviour of such multilayered setups. In
the domain $\Omega = (0,8)^2$ we choose two different types of perturbations
for an initial layer $\Omega^+(0) = {(\frac83,\frac{16}3)}
\times(0,8)$
when the physical parameters are $\alpha=0.011$, $m_-=4$, $m_+=1$, 
$\rho_\pm=1$, $S_-=0.5$, $S_+=-1$, $S_I=0$. As initial perturbations we choose
either
\begin{subequations}
\begin{alignat}{2} 
\label{eq:perta} \Sigma(0) & = 
\Big \{ \Big ( \frac 83 + 0.01 \cos \Big(\frac{8\pi y}8 \Big), y \Big)^\top : y \in [0,8] \Big\} \\
\notag & \qquad \cup \Big\{ \Big( \frac{16}3 + 0.01 \cos\Big(\frac{8\pi y}8 \Big), y \Big)^\top : y \in [0,8] \Big\}, \\
\label{eq:pertb}  \text{ or } \ {\Sigma(0)} & =  \Big\{ \Big( \frac83 + \sum_{\ell=1}^{20} \sigma_{-,\ell} \cos \Big(\frac{\ell\pi y}8 \Big ), y \Big)^\top : y \in [0,8] \Big\} \\
\notag & \qquad \cup \Big\{ \Big(\frac{{16}}3 + \sum_{\ell=1}^{20} \sigma_{+,\ell} \cos \Big(\frac{\ell\pi y}8 \Big), y \Big)^\top : y \in [0,8] \Big \},
\end{alignat}
\end{subequations}
with $|\sigma_{\pm,\ell}| < 0.01$, $\ell=1,\ldots,20$, some random coefficients.
Hence in the first case, we use the same deterministic perturbation made up of
the mode $\ell=8$ only, while in \eqref{eq:pertb} we use two different, and
essentially random, perturbations on the two interfaces.
The numerical simulations shown in Figure~\ref{fig:strip2a0011pertnew}
appear to confirm that $\ell=8$ is the most unstable mode. This observation is supported by the eigenvalues $\lambda_1 := A_{11} + A_{12}$ and $\lambda_2 := A_{11} - A_{12}$ of the matrix $\mathbf{A}$ appearing in \eqref{perturbation_matrix:multiplanar} reported in  Table~\ref{tbl:multi_planar_eigens}, where we note that the eigenvalues corresponding to the mode $\ell = 8$ are the largest positive pair.
\begin{table}[h]
\centering
\begin{tabular}{|c|c|c|c|c|c|c|c|c|c|c|}
\hline
$\ell$ & $1$ & $2$ & $3$ & $4$ & $5$ & $6$ & $7$ & $8$ \\
\hline
$\lambda_1$ & 0.0588 &  0.4482  &  0.8976  &  1.3466  &  1.7531  &  2.0825 &   2.3058  &  \textbf{2.3977}   \\
$\lambda_2$&  $-0.1546$  &  0.3001  &  0.8118  &  1.3028  &  1.7326    &2.0735  &  2.3020  &  \textbf{2.3962} \\
\hline\hline
$\ell$ & $9$ & $10$ & $11$ & $12$ & $13$ & $14$ & $15$ & $16$ \\
\hline
$\lambda_1$ &  2.3352  &  2.0967 & 1.6610  &  1.0074  &  0.1157   & $-1.0347$ & $-2.4639$  & $-4.1920$ \\
$\lambda_2 $ & 2.3346  &  2.0964  &  1.6609   & 1.0074 &  0.1157  & $-1.0347$ & $-2.4639$ &  $-4.1920$ \\
\hline
\end{tabular}
\caption{Eigenvalues of the matrix $\mathbf{A}$ in \eqref{perturbation_matrix:multiplanar} for the setting of Figure~\ref{fig:strip2a0011pertnew} corresponding to perturbation modes $\ell = 1, \dots, 16$. The largest positive pair highlighted in bold corresponds to mode $\ell = 8$.}
\label{tbl:multi_planar_eigens}
\end{table}
\begin{figure}
\center
\includegraphics[angle=-90,width=0.4\textwidth]{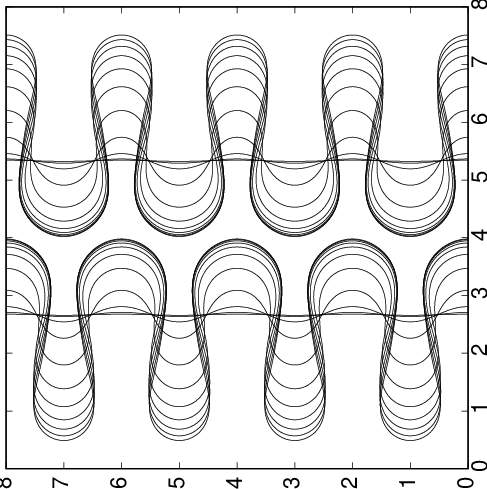} 
\includegraphics[angle=-90,width=0.4\textwidth]{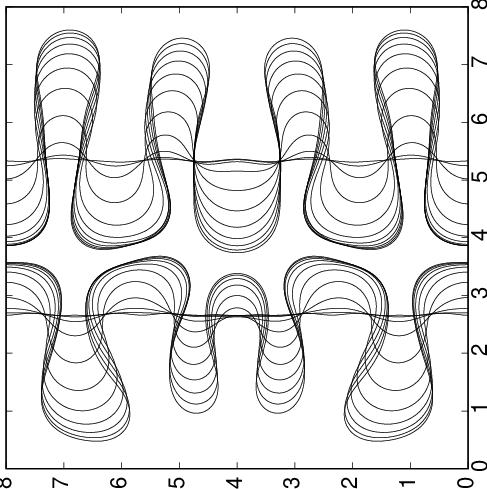} 
\caption{($\Omega = (0,8)^2$) $\alpha=0.011$, $m_-=4$, $m_+=1$, $\rho_\pm=1$,
$S_-=0.5$, $S_+=-1$, $S_I=0$.
We show the solution at times $t=0,1,\ldots,10$, for the 
initial perturbations \eqref{eq:perta} (left) and \eqref{eq:pertb} (right).
}
\label{fig:strip2a0011pertnew}
\end{figure}%

Inside the same computational domain, but now for the physical parameters
$\alpha=0.1$, $m_-=0.1$, $m_+=0.2$, $\rho_-=0.1$, $\rho_+=0.5$,
$S_-=1$, $S_+=-4$, $S_I=1$, we consider an experiment for an initial layer
$\Omega^+(0) = (0.3,0.6)\times(0,8)$ that is perturbed on the left hand
side. In particular, for the initial interface we let
\begin{equation} \label{eq:pertc}
\Sigma(0) =
\Big \{ \Big ( 0.3 + \sum_{\ell=1}^{20} \sigma_{\ell} \cos \Big (\frac{\ell\pi y}8 \Big ), y \Big )^\top : y \in [0,8] \Big \}
\cup
\{0.6\} \times (0,8)
\end{equation}
with $|\sigma_{\ell}| < 0.01$, $\ell=1,\ldots,20$, some random coefficients.
In Figure~\ref{fig:stripunstable2} we can observe that the perturbation on the
left interface grows, seemingly fastest for the mode $\ell=12$, which then
starts to also affect the right interface. Interestingly, for 
the right interface the mode $\ell=14$ appears to be growing the fastest. For
long times, one of the seven fingers is restricted in growth, meaning that
eventually only six fingers continue to grow.
\begin{figure}
\center
\includegraphics[angle=-90,width=0.45\textwidth]{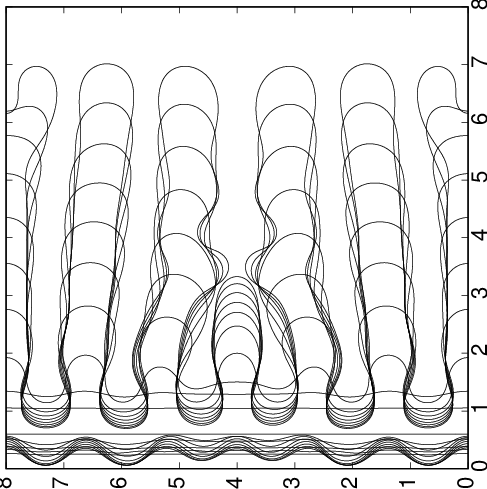} 
\includegraphics[angle=-90,width=0.45\textwidth]{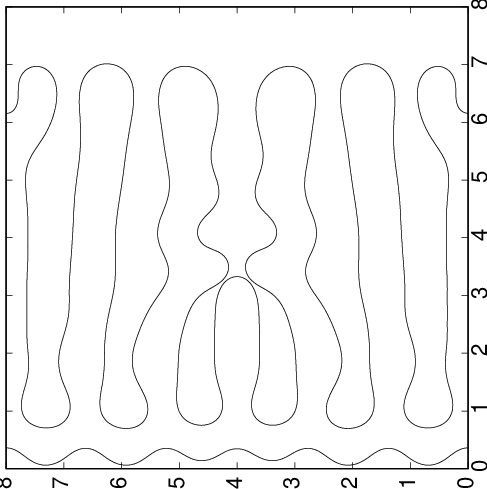} 
\caption{($\Omega = (0,8)^2$) $\alpha=0.1$, $m_-=0.1$, $m_+=0.2$,
$\rho_-=0.1$, $\rho_+=0.5$,
$S_-=1$, $S_+=-4$, $S_I=1$.
We show the solution at times $t=0,1,\ldots,10$, and separately at time $t=10$,
for the initial perturbation \eqref{eq:pertc}.}
\label{fig:stripunstable2}
\end{figure}%

\subsection{Numerical simulations for radially symmetric interfaces}

We begin with a single circular interface inside the domains
$\Omega = \bB^d_4(0) := \{ \vec z \in \mathbb R^d : |\vec z| < 4$,
$d \in \{2,3\}$. For the initial radius we choose $r(0) = 0.25$, while the physical
parameters are chosen as 
$\alpha=1$, $m_-=0.1$, $m_+=1$, $\rho_-=0.1$, $\rho_+=1$, 
$S_-=4$, $S_+=-1$, $S_I=1$.
A comparison between $r(t)$ and the radii $r_h(t_n)$ of the discrete
interfaces $\Sigma^n$, where
\[
r_h(t_n) = \max\{|\vec z| : \vec z \in \Sigma^n \},
\]
is shown in Figure~\ref{fig:2f}. We observe that in both 2d and 3d our
numerical algorithm matches the evolution of the true solution obtained from solving the ODE \eqref{radial:ode} very well.
\begin{figure}
\center
\includegraphics[angle=-90,width=0.45\textwidth]{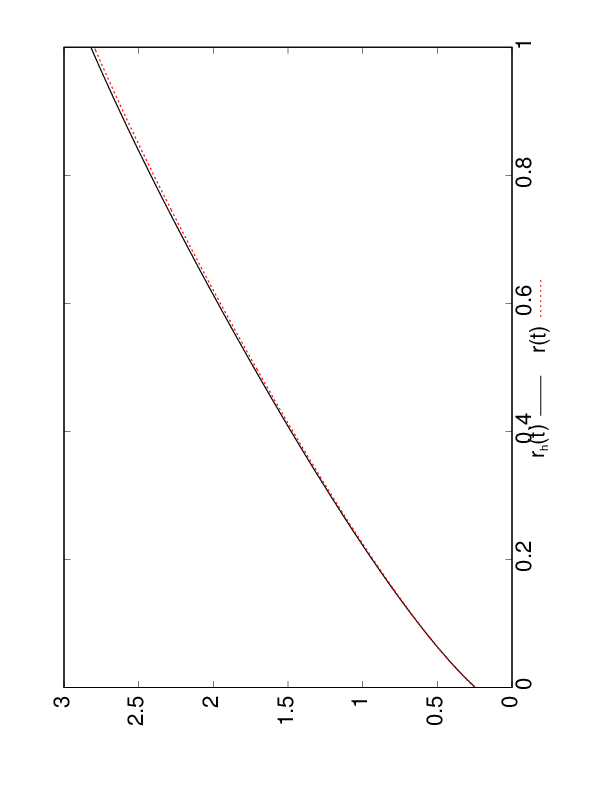}
\includegraphics[angle=-90,width=0.45\textwidth]{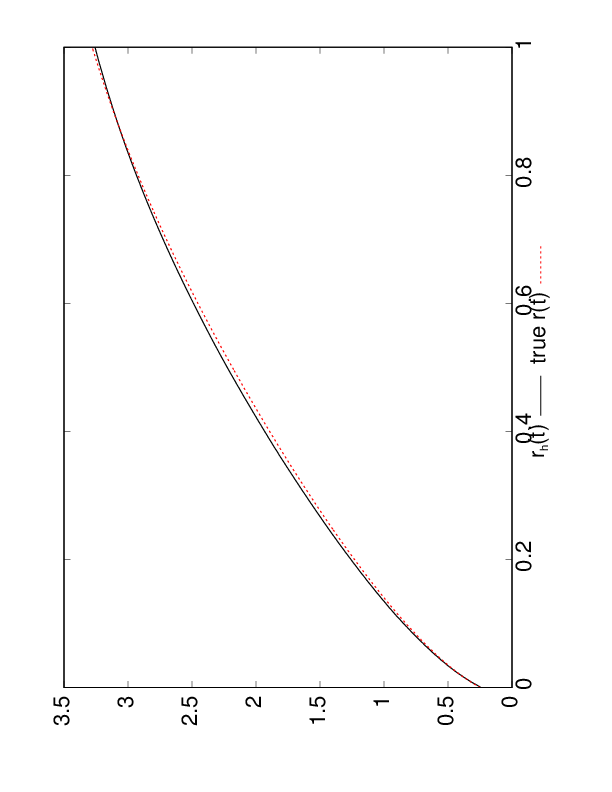}
\caption{($\Omega = \bB^d_4(0)$) 
$\alpha=1$, $m_-=0.1$, $m_+=1$, $\rho_-=0.1$, $\rho_+=1$, 
$S_-=4$, $S_+=-1$, $S_I=1$ with $r(0) = 0.25$ for $d=2$ (left)
and $d=3$ (right).
We plot $r(t)$ and $r_h(t)$ over time.
}
\label{fig:2f}
\end{figure}%

Within $\Omega = \bB^2_4(0)$ we also consider a multilayered setup for the
parameters $\alpha=1$, $m_-=0.1$, $m_+=1$,
$\rho_-=0.1$, $\rho_+=1$, $S_-=4$, $S_+=-1$, $S_I=1$. In particular,
we let $\Omega^+(0) = \bB^2_2(0) \setminus \overline{\bB^2_1(0)}$ so that
$\Sigma(0) = \partial\bB^2_1(0) \cup \bB^2_2(0)$.
We then track the radii of the two discrete interfaces,
\[
r_h(t_n) = (\min\{|\vec z| : \vec z \in \Sigma^n \},
\max\{|\vec z| : \vec z \in \Sigma^n \}),
\]
and compare them with the exact solution obtained from solving the ODE system \eqref{radial:shell:ode} in Figure~\ref{fig:3}.
\begin{figure}
\center
\includegraphics[angle=-90,width=0.45\textwidth]{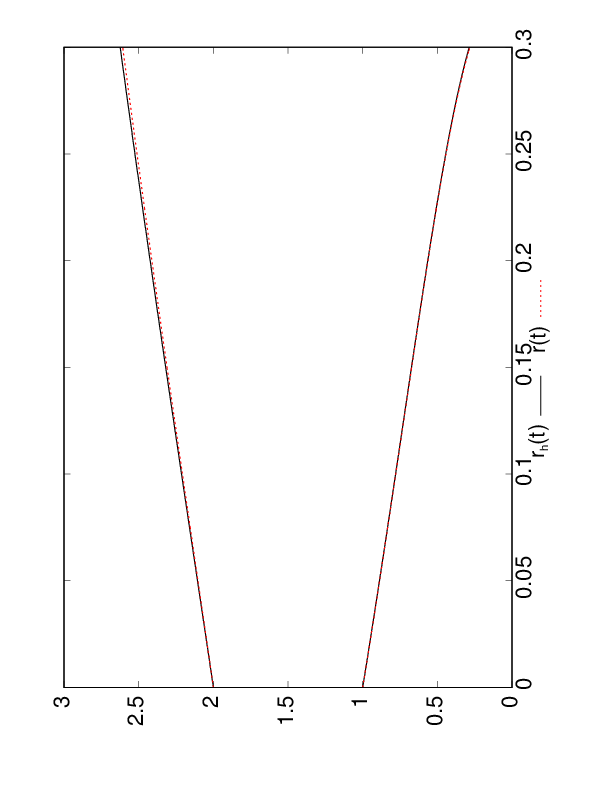}
\caption{($\Omega = \bB^2_4(0)$) $\alpha=1$, $m_-=0.1$, $m_+=1$,
$\rho_-=0.1$, $\rho_+=1$, $S_-=4$, $S_+=-1$, $S_I=1$
with $r(0)=(1, 2)$.
We plot $r(t)$ and $r_h(t)$ over time.
}
\label{fig:3}
\end{figure}%

Next we would like to investigate how a radially symmetric layer phase evolves
in the presence of perturbations. Inside the computational domain
$\Omega = \bB^2_5(0)$ we let $\alpha=0.1$, $m_\pm=1$, $\rho_\pm=0.4$,
$S_\mp=\pm1$, $S_I=0$. For the initial interface we choose
\begin{align*}
\Sigma(0) & = {\Big \{ } {\Big(}2.23 + 0.01 \cos{\Big(}4\theta - \frac\pi6{\Big)\Big)}
\binom{\cos\theta}{\sin\theta} : \theta \in [0,2\pi) {\Big\}} \\ & \quad
\cup
{\Big\{} {\Big(}4.16 + 0.01 \cos{\Big(}4\theta - \frac\pi6{\Big)\Big)}
\binom{\cos\theta}{\sin\theta} : \theta \in [0,2\pi) {\Big\}},
\end{align*}
{i.e., two circles with radii $2.23$ and $4.16$ with the same perturbation of
size $0.01$ for the single mode $\ell=4$.
The results are shown in Figure~\ref{fig:fig7noise2}.
We remark that in the absence of non-radial perturbations, numerical solutions of the ODE system \eqref{radial:shell:ode} yield in the large time limit an inner and outer equilibrium radii of approximately 2.23 and 4.16, respectively. In Table~\ref{tbl:multiradialshells_d2}, we report on the two eigenvalues (denoted as $\lambda_1$ and $\lambda_2$) of the perturbation matrix $\mathbf{A}$ in \eqref{shell:perturbation matrix} corresponding to perturbation modes $\ell=1, \dots, 10$, where we see that mode $\ell = 4$ exhibits the largest positive eigenvalue, which supports the above numerical observation on the growth of the mode $\ell = 4$ perturbation.
We notice that the
inner perturbation grows much more profoundly than the outer one.
This is motivated by the fact that the eigenvector corresponding to the positive eigenvalue of the unstable $4$ mode is $(0.9924, 0.1228)$. Referring back to equation \eqref{eq:y12}, we see that $y_1 = 0.9924$ (corresponding to the inner radius) is roughly  eight times larger than $y_2 = 0.1228$ (corresponding to the outer radius) which supports the observation that the inner perturbation is amplified more drastically.}
\begin{table}[h]
\centering
\begin{tabular}{|c|c|c|c|c|c|}
\hline
$\ell$ & $1$ & $2$ & $3$ & $4$ & 5 \\
\hline
$\lambda_1$ & $-0.1579$  &  0.1013  &  0.3544  &  \textbf{0.3995}  &  0.0751  \\
$\lambda_2$&   $-0.8508$  &  $-0.6310$ & $-0.4461$  &  $-0.2891$   & $-0.1663$ \\
\hline\hline
$\ell$ & 6 & 7 & 8 & 9 & 10 \\
\hline
$\lambda_1$ &  $-0.7916$  &  $-2.2748 $ & $-4.5112$ &  $-7.6099$  &$-11.6804$ \\
$\lambda_2$ &$ -0.0485$  &   0.0084  &  0.0182 & $-0.0296$ & $-0.1439$ \\
\hline
\end{tabular}
\caption{Eigenvalues of the matrix $\mathbf{A}$ in \eqref{shell:perturbation matrix} for the setting of Figure~\ref{fig:fig7noise2} corresponding to perturbation modes $\ell = 1, \dots, 10$. The largest positive eigenvalue highlighted in bold corresponds to mode $\ell = 4$.}
\label{tbl:multiradialshells_d2}
\end{table}
\begin{figure}
\center
\mbox{
\includegraphics[angle=-90,width=0.25\textwidth]{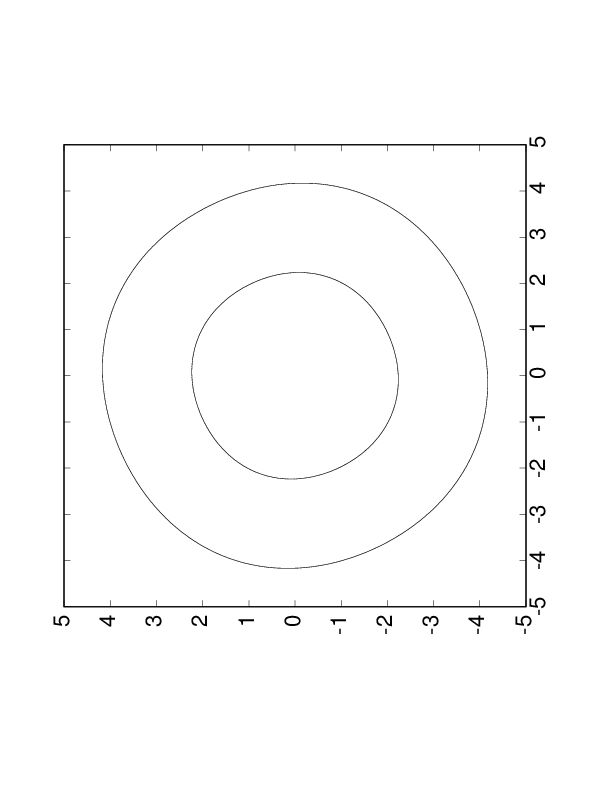} 
\includegraphics[angle=-90,width=0.25\textwidth]{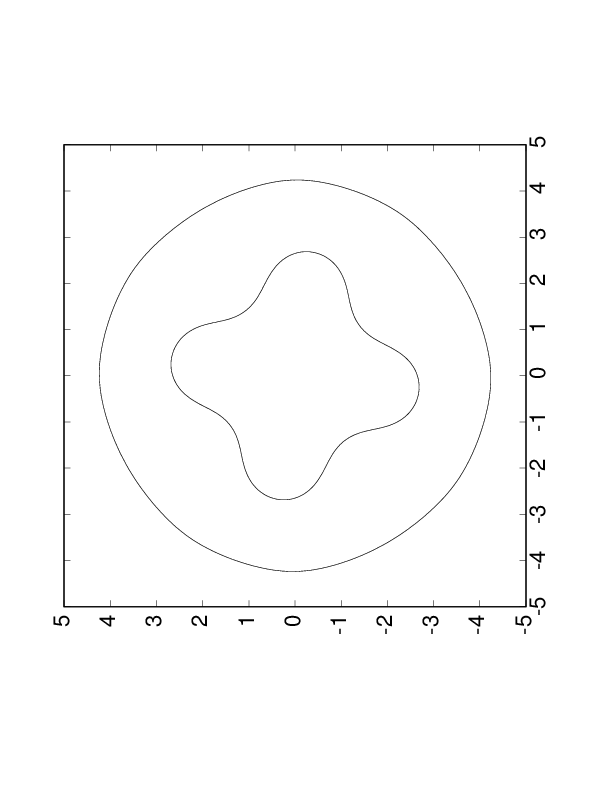}
\includegraphics[angle=-90,width=0.25\textwidth]{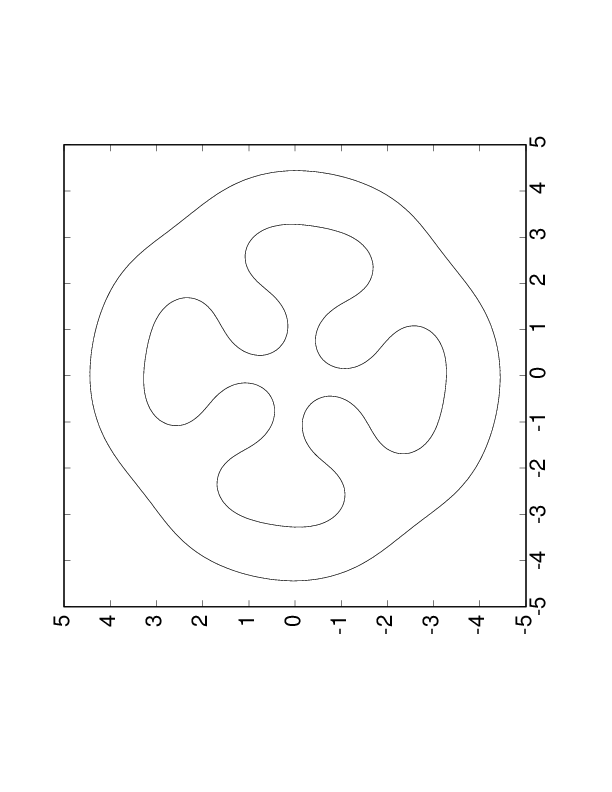}
\includegraphics[angle=-90,width=0.25\textwidth]{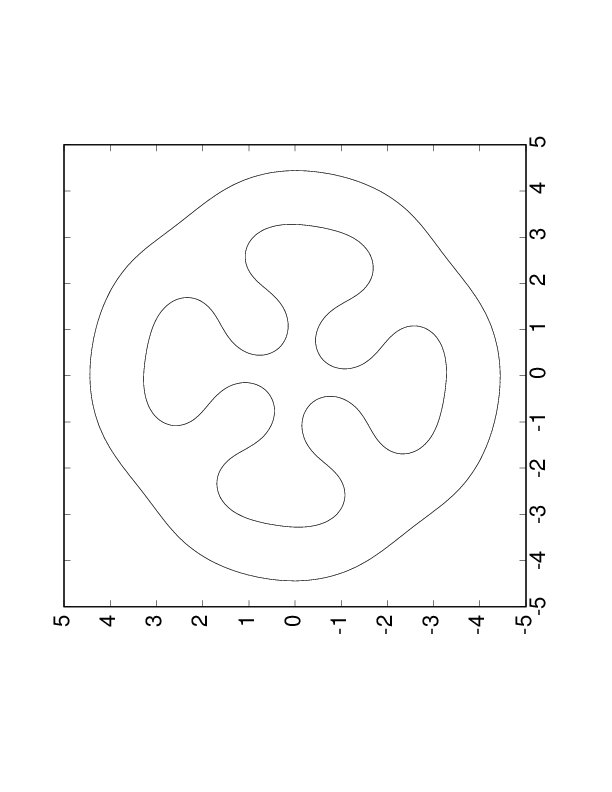}
}
\caption{($\Omega = \bB^2_5(0)$) $\alpha=0.1$, $m_\pm=1$, $\rho_\pm=0.4$,
$S_\mp=\pm1$, $S_I=0$
with $r(0)=(2.23, 4.16)$.
We show the solution at times $t=0,10,50,200$.}
\label{fig:fig7noise2}
\end{figure}%

We also consider two 3d simulations for a phase in the form of a shell. An
experiment analogous to Figure~\ref{fig:3}, but now in the domain
$\Omega = \bB^3_5(0)$ can be seen in Figure~\ref{fig:3dthin0}. Here the
physical parameters are set to $\alpha=0.07$, $m_\pm=1$, $\rho_\pm=2$,
$S_-=1$, $S_+=-4$, $S_I=0.42$, while the initial phase is chosen as
$\Omega^+(0) = \bB^3_{3.1}(0) \setminus \overline{\bB^3_{2.1}(0)}$. We notice
that our algorithm \eqref{eq:MSHG} approximates the known true solution obtained from solving the ODE system \eqref{radial:shell:ode2} very
well.
\begin{figure}
\center
\includegraphics[angle=-90,width=0.45\textwidth]{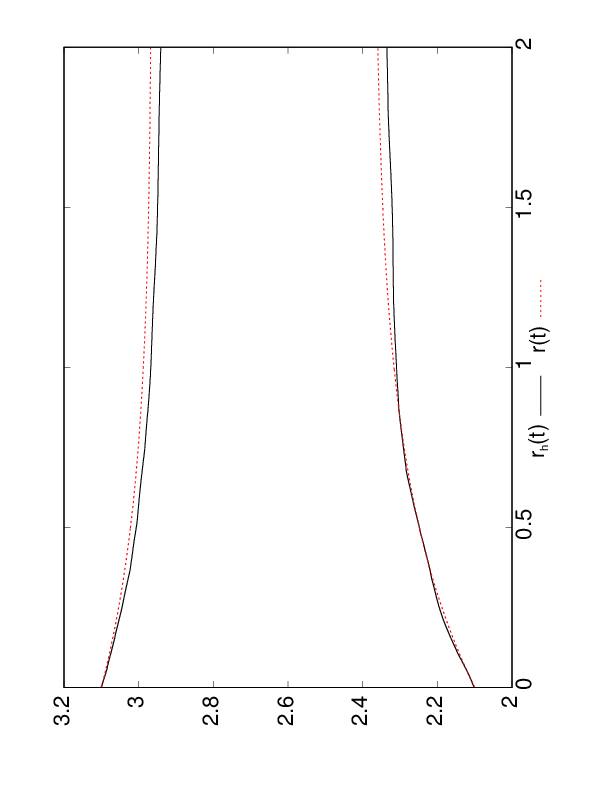}
\caption{($\Omega = \bB^3_5(0)$) $\alpha=0.07$, $m_\pm=1$, $\rho_\pm=2$,
$S_-=1$, $S_+=-4$, $S_I=0.42$ with $r(0)=(2.1, 3.1)$.
We plot $r(t)$ and $r_h(t)$ over time.
}
\label{fig:3dthin0}
\end{figure}%
The same experiment but within the cubic domain $\Omega = (-5,5)^3$ is shown in
Figure~\ref{fig:3dthin0sq}.
\begin{figure}
\center
\includegraphics[angle=-0,width=0.45\textwidth]{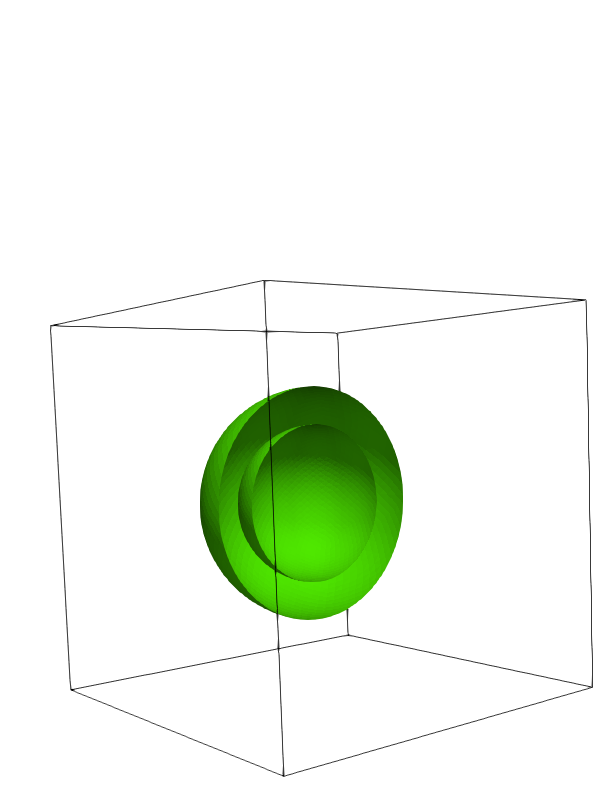}
\includegraphics[angle=-0,width=0.45\textwidth]{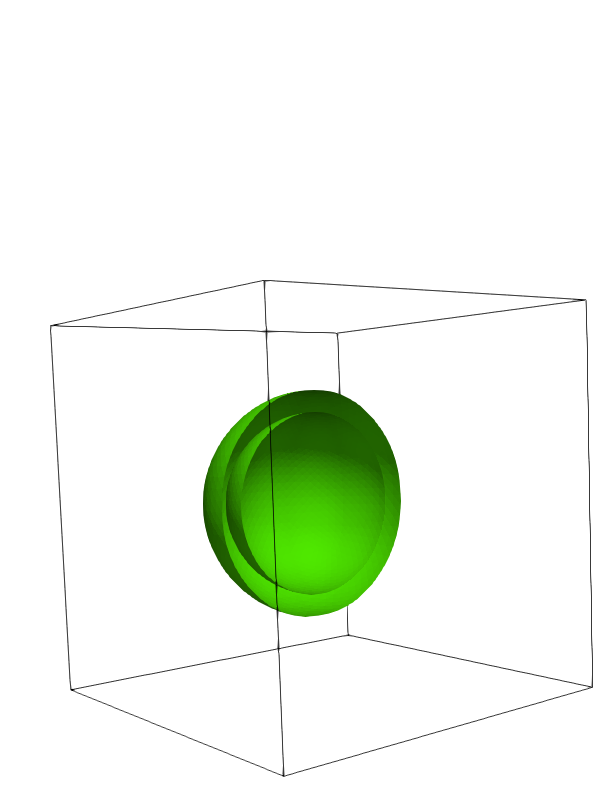}
\caption{($\Omega = (-5,5)^3$) 
$\alpha=0.07$, $m_\pm=1$, $\rho_\pm=2$,
$S_-=1$, $S_+=-4$, $S_I=0.42$ with $r(0)=(2.1, 3.1)$.
We show the solution at times $t=0$ and $t=10$.
}
\label{fig:3dthin0sq}
\end{figure}%

\subsection{Numerical simulations with pinch-off}

We start from a cigar-like initial shape of total dimension
$0.8\times 0.76 \times 0.76$, i.e., nearly a sphere that is slightly stretched
in the $x$-direction, centered at the origin within the spherical domain
$\Omega = \bB_4^3(0)$.
The physical parameters are chosen as $\alpha=0.05$,
$m_\pm=0.5$, $\rho_-=0.4$, $\rho_+=1$, $S_\mp=\pm1$, $S_I=-3.7$.
In Figure~\ref{fig:3dtopo1round},
we show the evolution until just before the first pinch-off.
\begin{figure}
\center
\mbox{
\hspace*{-1cm}
\includegraphics[angle=-0,width=0.2\textwidth]{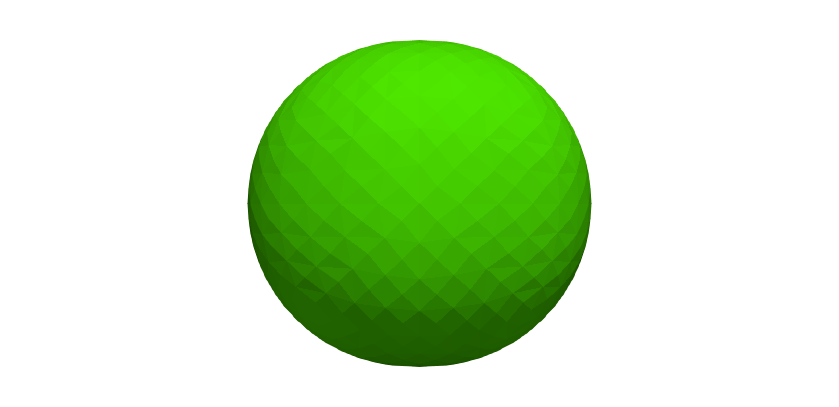}
\includegraphics[angle=-0,width=0.2\textwidth]{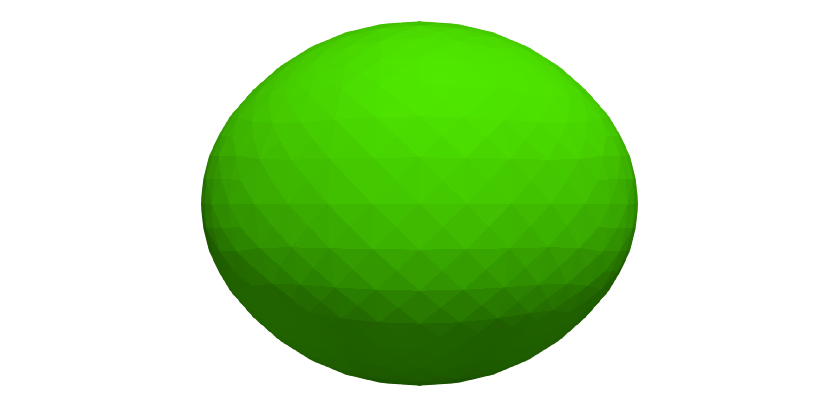}
\includegraphics[angle=-0,width=0.2\textwidth]{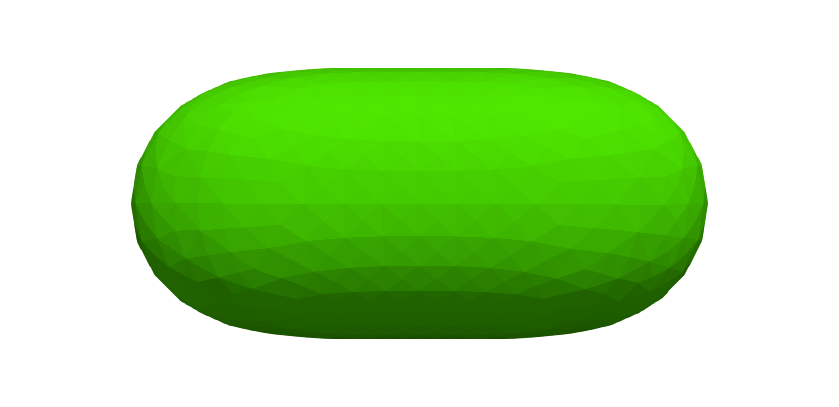}
\includegraphics[angle=-0,width=0.2\textwidth]{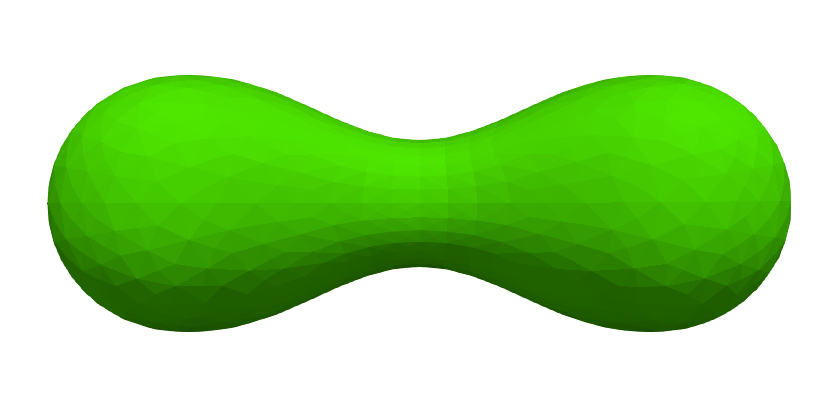}
\includegraphics[angle=-0,width=0.2\textwidth]{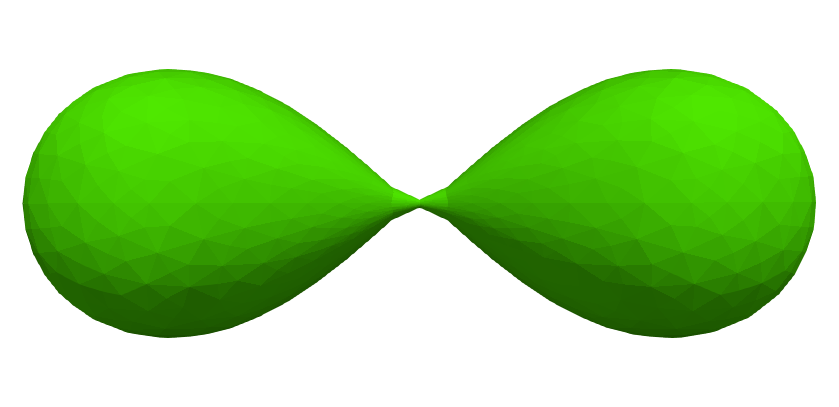}
}
\caption{($\Omega = \bB_4^3(0)$) Pinch-off for $\alpha=0.05$,
$m_\pm=0.5$, $\rho_-=0.4$, $\rho_+=1$, $S_\mp=\pm1$, $S_I=-3.7$.
We show the solution at times $t=0, 1, 2, 2.4, 2.5$.
}
\label{fig:3dtopo1round}
\end{figure}%
In order to break the symmetry of the evolution with respect to the origin,
we start another experiment with the same initial blob, but now centered 
at $(-2,-2,-2)^\top$ within the cube $\Omega = (0,8)^3$.
In the evolution shown in Figures~\ref{fig:3dtopo2offset} and 
\ref{fig:3dtopo2offset2} we see
that a total of 23 pinch-offs occur, leading to 24 blobs co-existing in the
domain at the final time. The fact that the last pinch-off occurs at time 
$t=51.23$ indicates that afterwards there is no more space available for 
blobs to grow into.
\begin{figure}
\center
\includegraphics[angle=-0,width=0.24\textwidth]{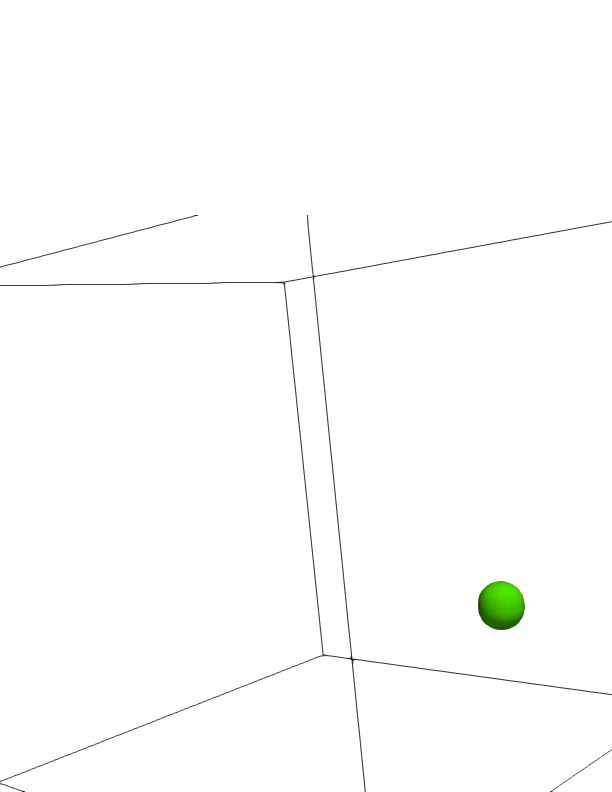}
\includegraphics[angle=-0,width=0.24\textwidth]{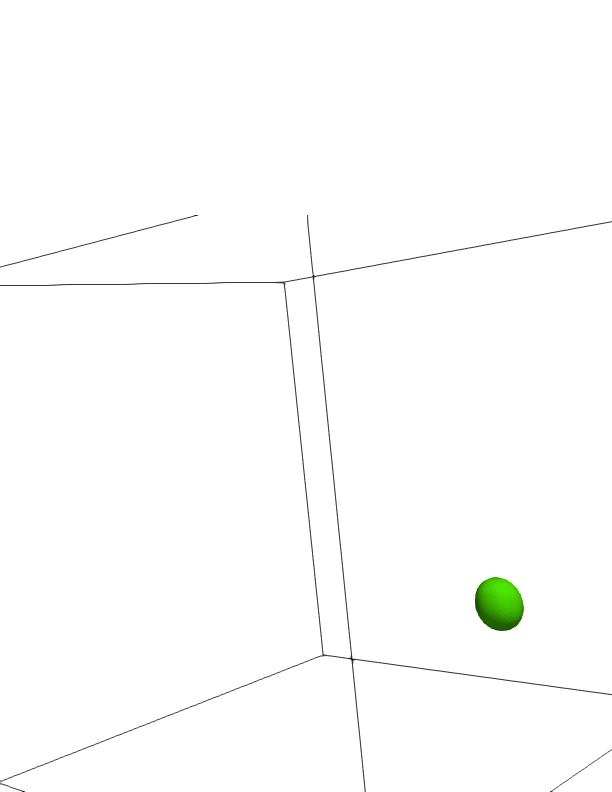}
\includegraphics[angle=-0,width=0.24\textwidth]{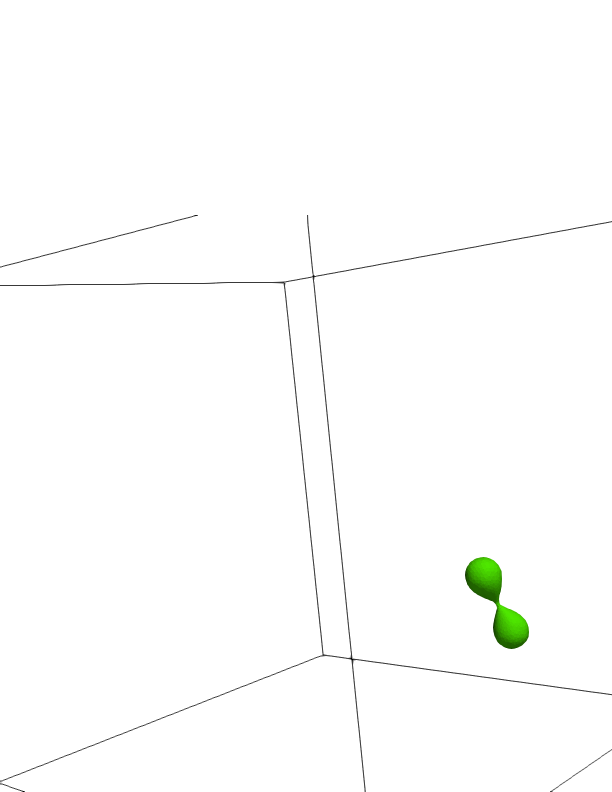}
\includegraphics[angle=-0,width=0.24\textwidth]{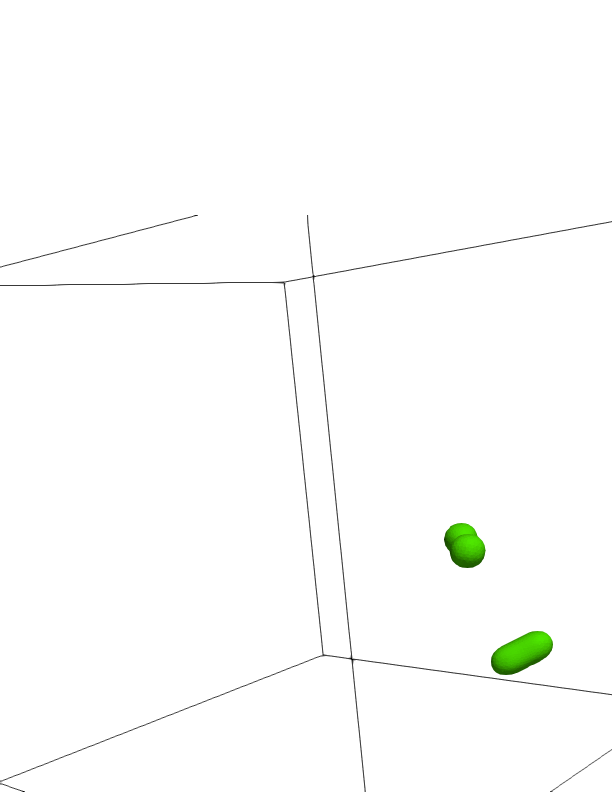}
\includegraphics[angle=-0,width=0.24\textwidth]{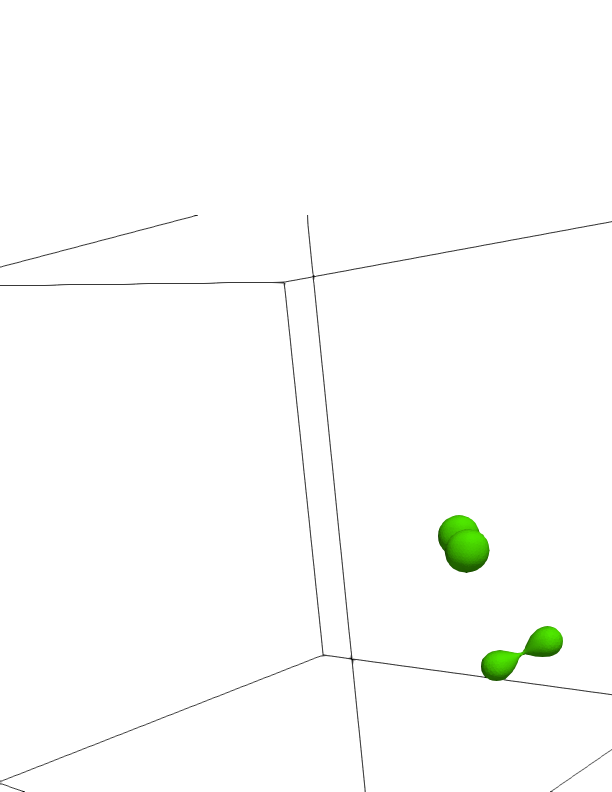}
\includegraphics[angle=-0,width=0.24\textwidth]{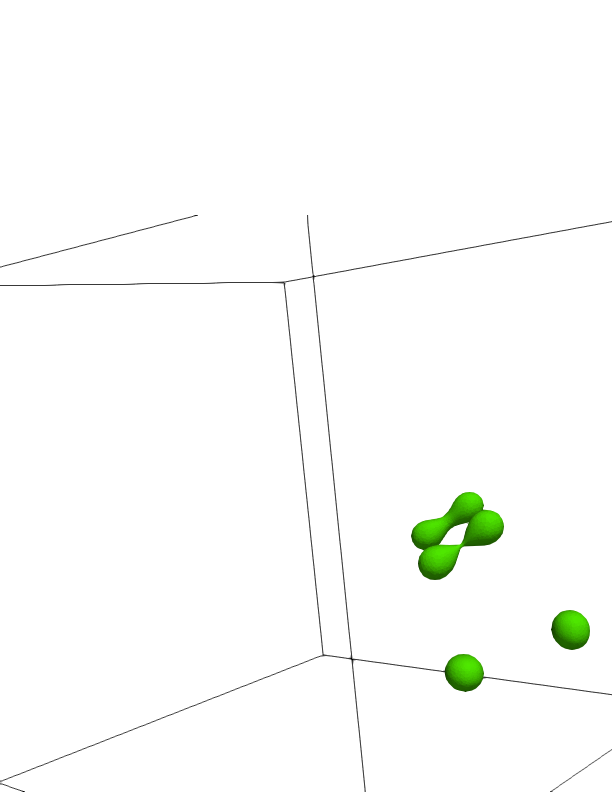}
\includegraphics[angle=-0,width=0.24\textwidth]{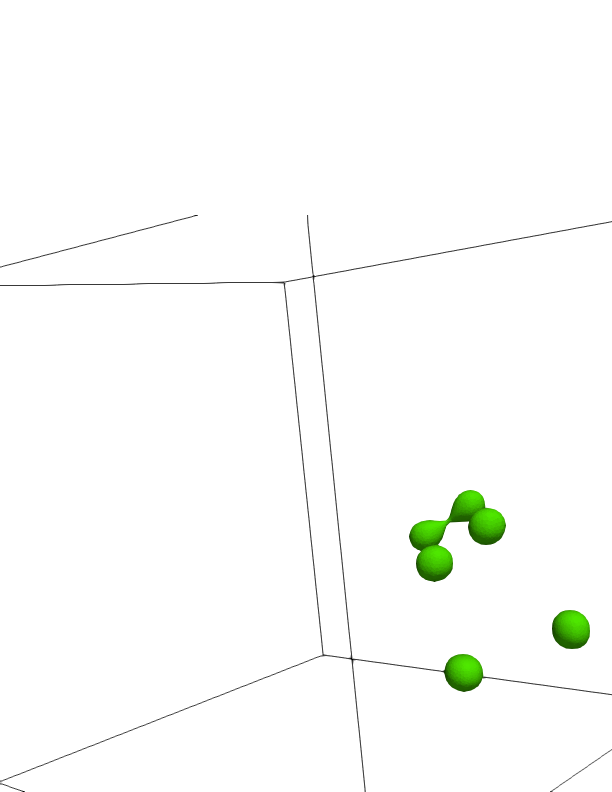}
\includegraphics[angle=-0,width=0.24\textwidth]{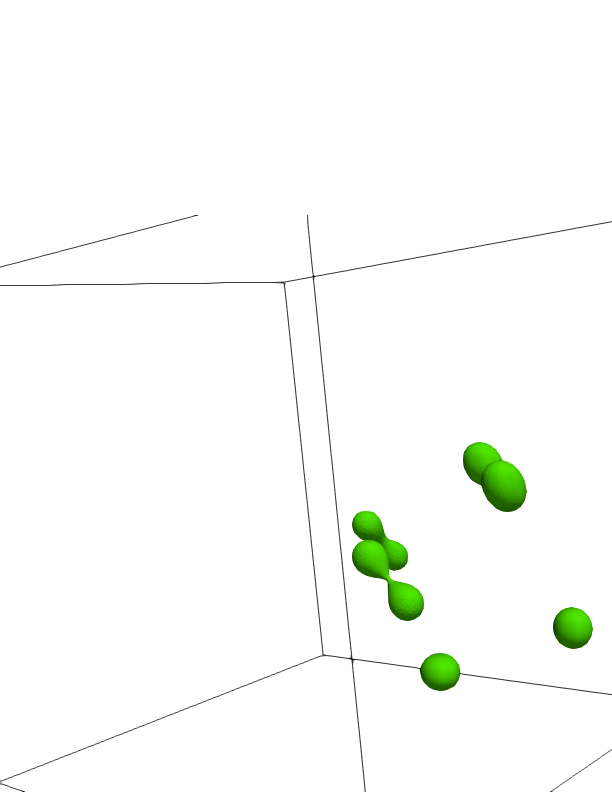}
\includegraphics[angle=-0,width=0.24\textwidth]{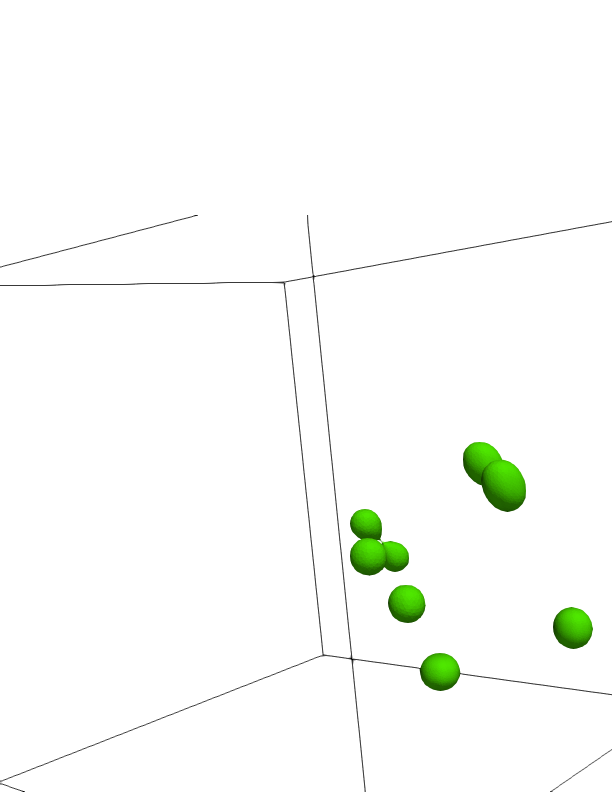}
\includegraphics[angle=-0,width=0.24\textwidth]{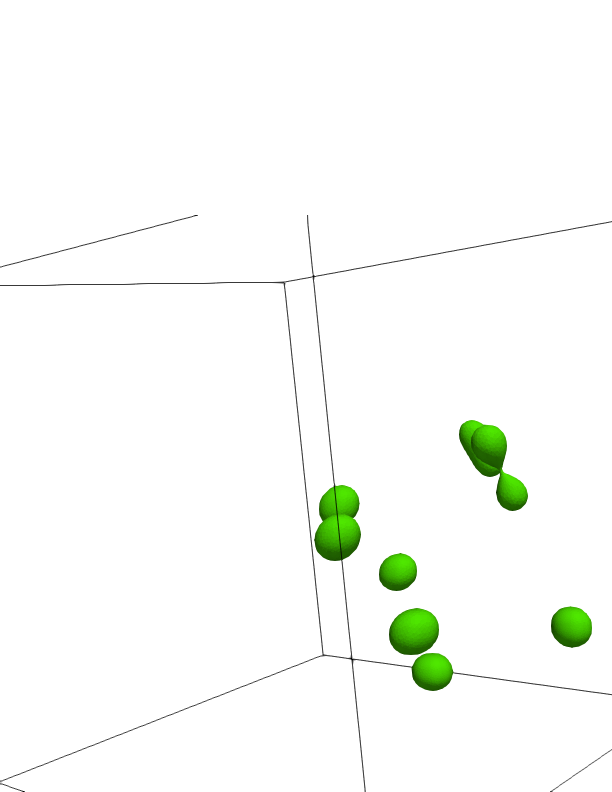}
\includegraphics[angle=-0,width=0.24\textwidth]{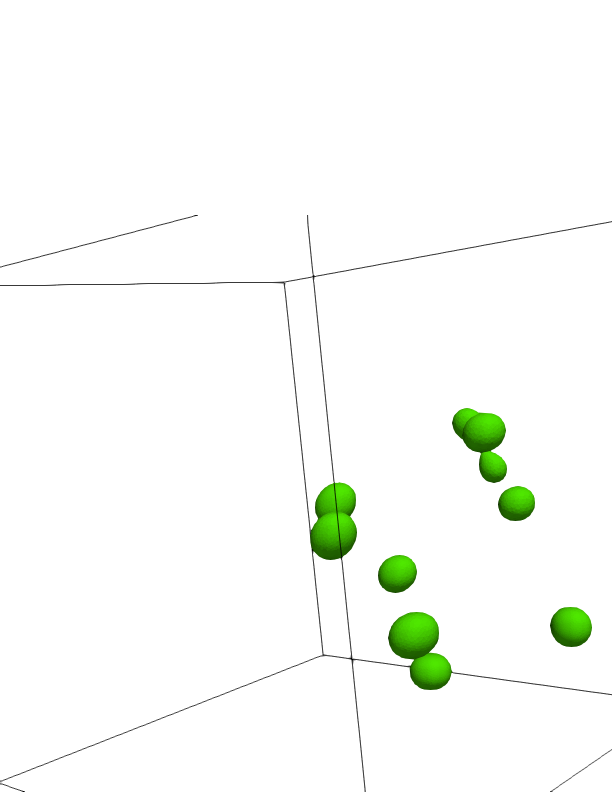}
\includegraphics[angle=-0,width=0.24\textwidth]{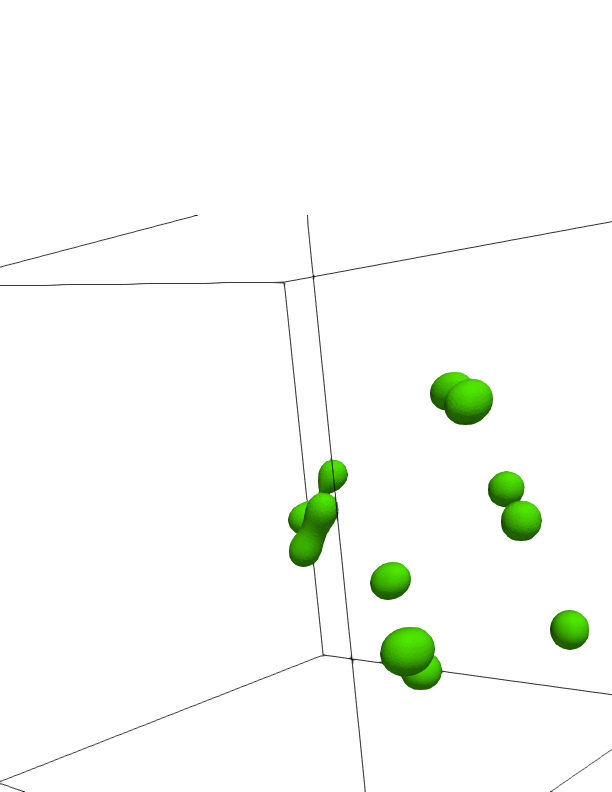}
\includegraphics[angle=-0,width=0.24\textwidth]{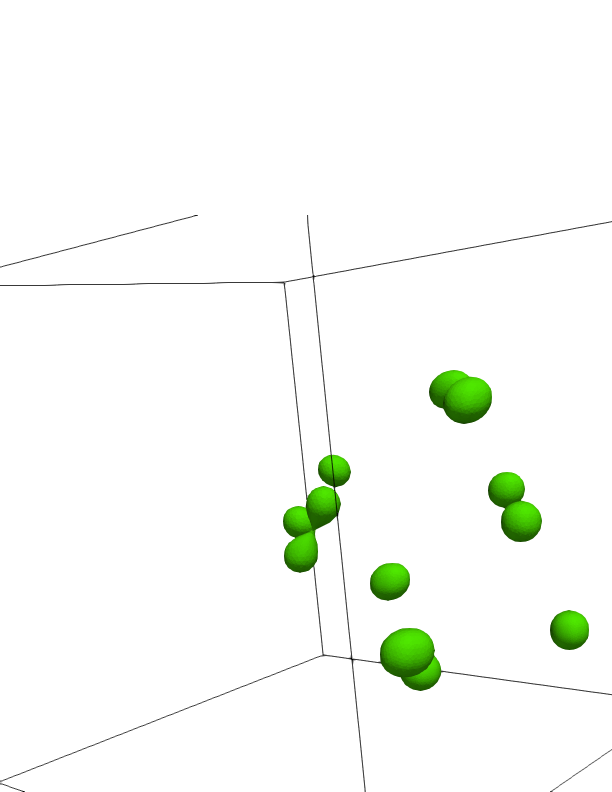}
\includegraphics[angle=-0,width=0.24\textwidth]{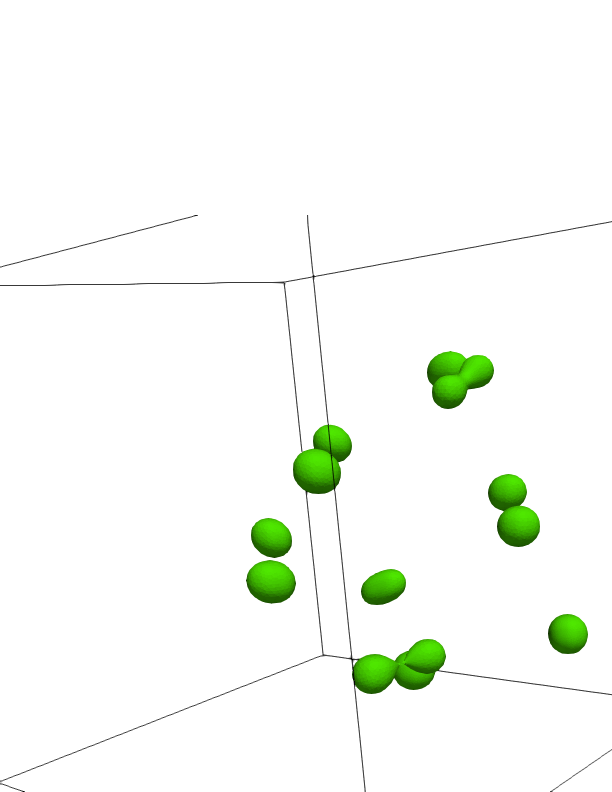}
\includegraphics[angle=-0,width=0.24\textwidth]{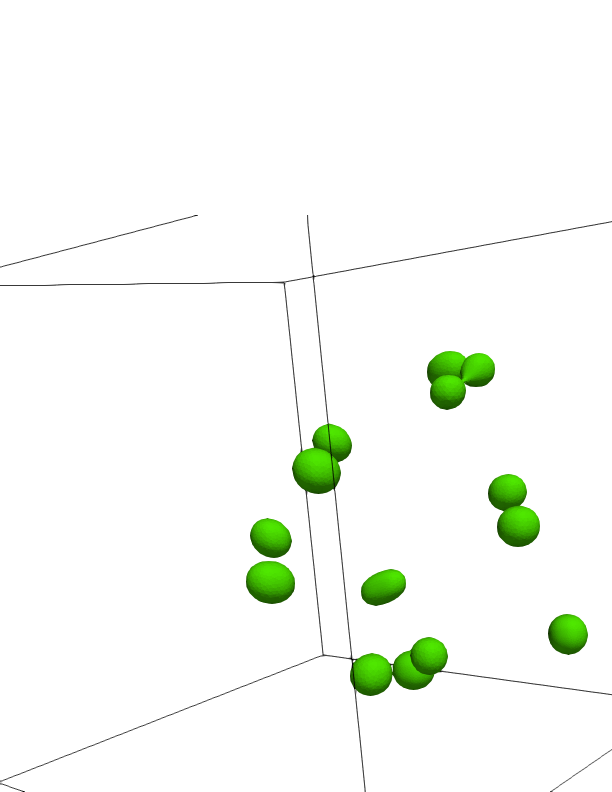}
\includegraphics[angle=-0,width=0.24\textwidth]{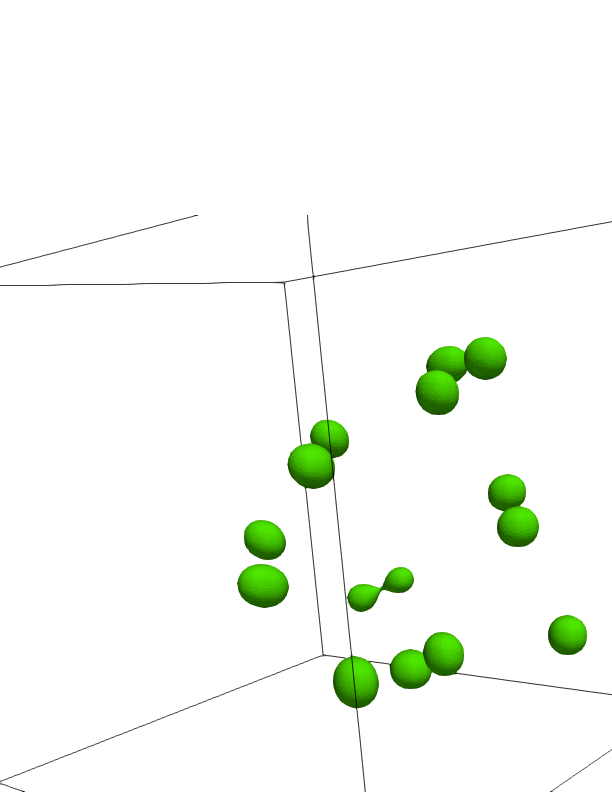}
\caption{($\Omega = (0,8)^3$) Pinch-offs for 
$\alpha=0.05$, $m_\pm=0.5$, $\rho_-=0.4$, $\rho_+=1$, $S_\mp=\pm1$, $S_I=-3.7$.
We show the evolution at times $t= 0$, 2, 3.7, 7.95, 8.45, 11.27, 11.35, 16.25,
16.33, 18.92, 19.45, 22.47, 22.72, 25.83, 25.9, 26.94. All but the first two
snap shots show a time just before a pinch-off occurs.
}
\label{fig:3dtopo2offset}
\end{figure}%
\begin{figure}
\center
\includegraphics[angle=-0,width=0.24\textwidth]{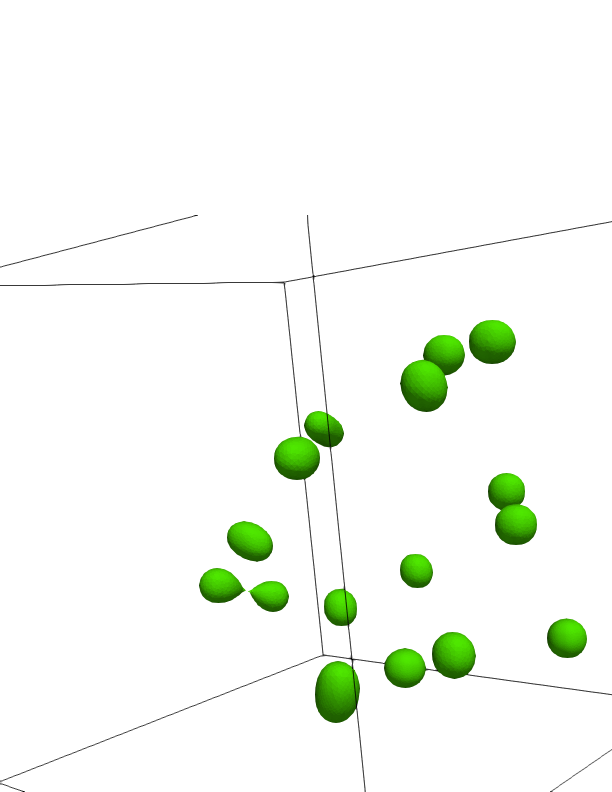}
\includegraphics[angle=-0,width=0.24\textwidth]{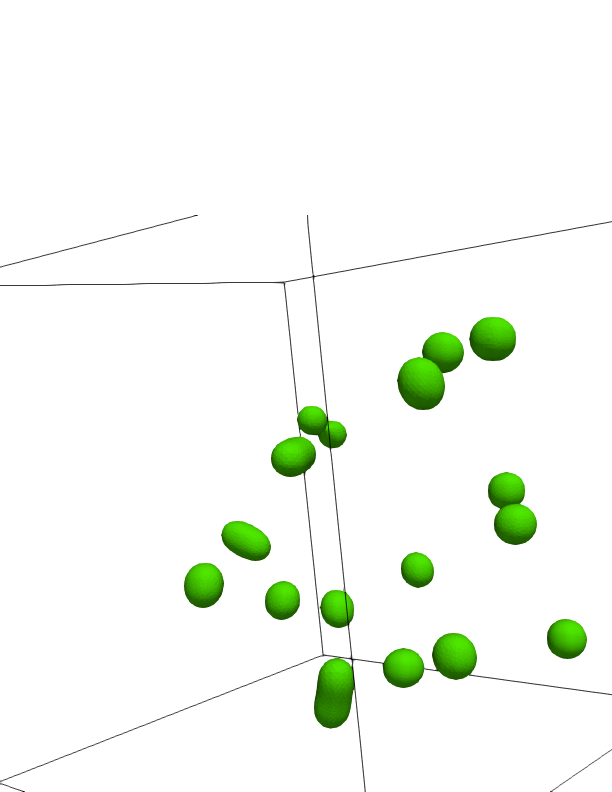}
\includegraphics[angle=-0,width=0.24\textwidth]{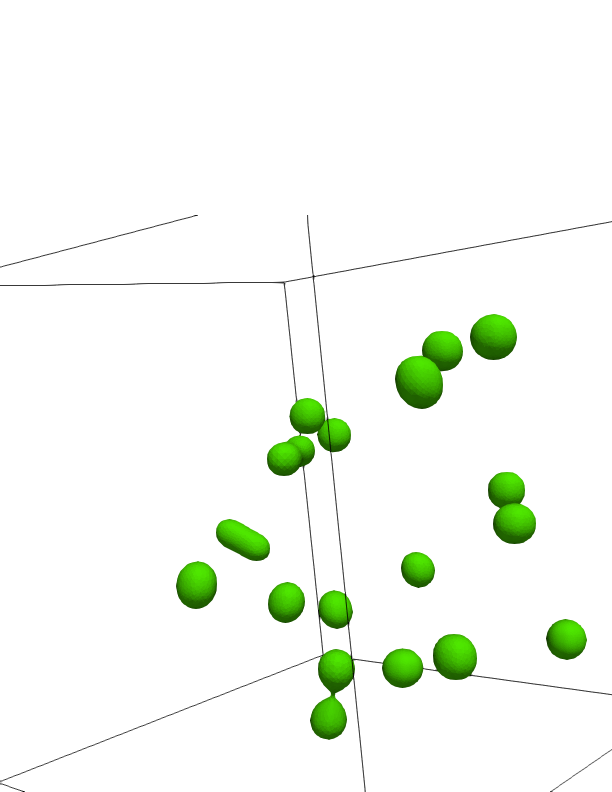}
\includegraphics[angle=-0,width=0.24\textwidth]{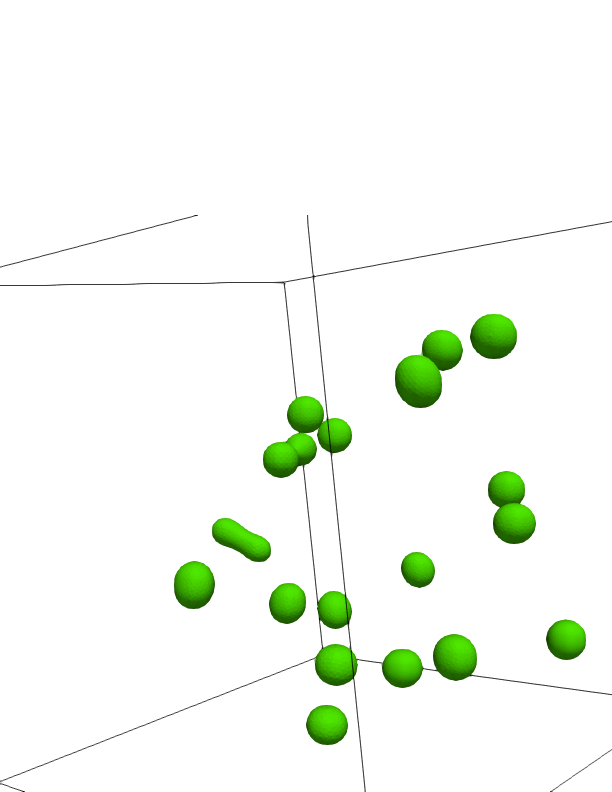}
\includegraphics[angle=-0,width=0.24\textwidth]{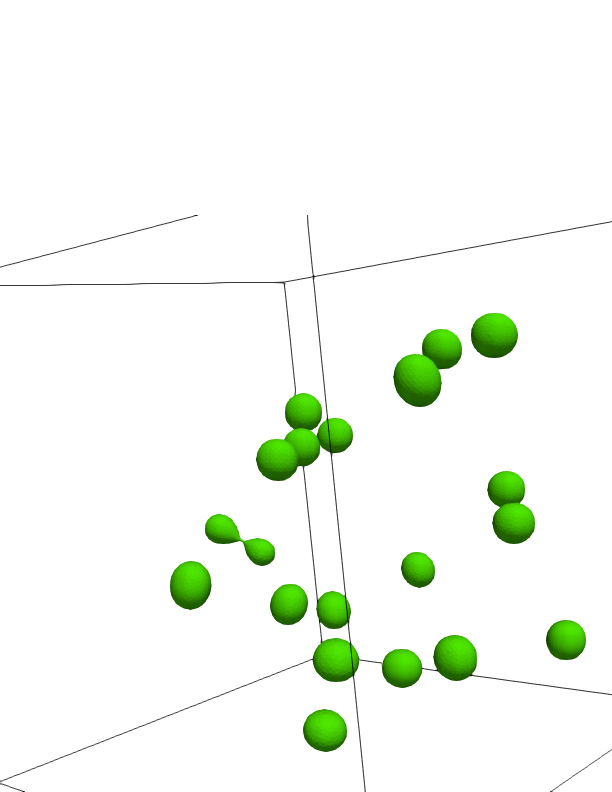}
\includegraphics[angle=-0,width=0.24\textwidth]{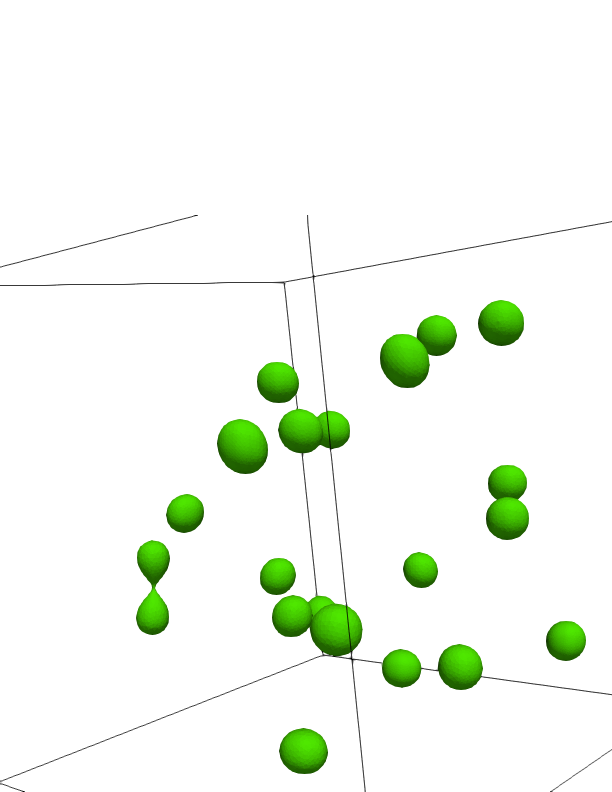}
\includegraphics[angle=-0,width=0.24\textwidth]{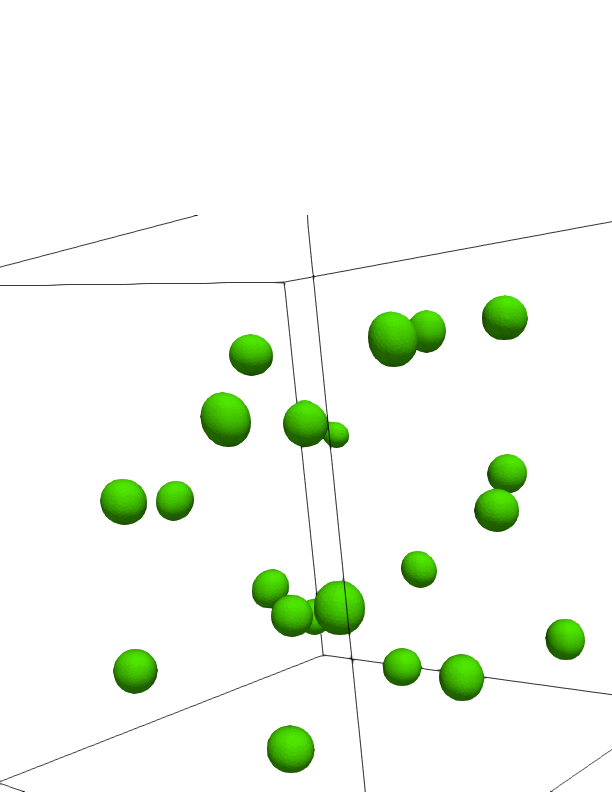}
\includegraphics[angle=-0,width=0.24\textwidth]{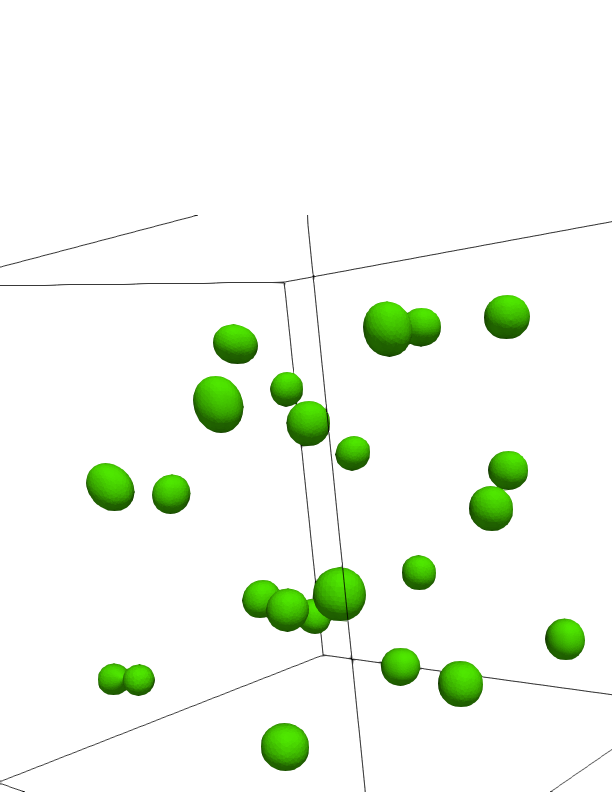}
\includegraphics[angle=-0,width=0.24\textwidth]{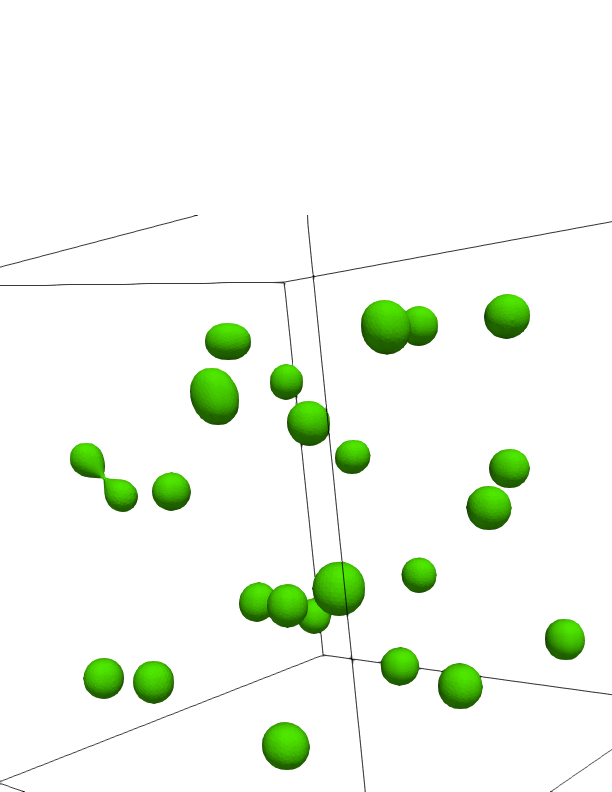}
\includegraphics[angle=-0,width=0.24\textwidth]{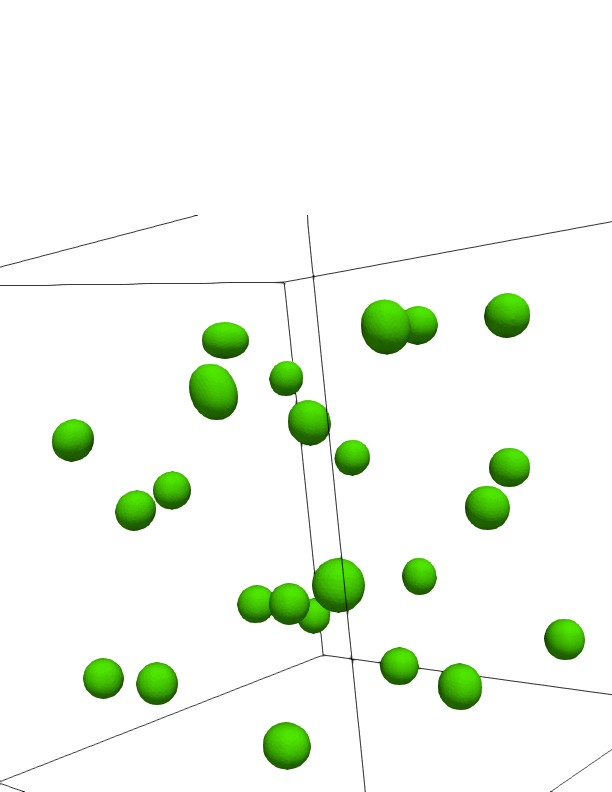}
\includegraphics[angle=-0,width=0.24\textwidth]{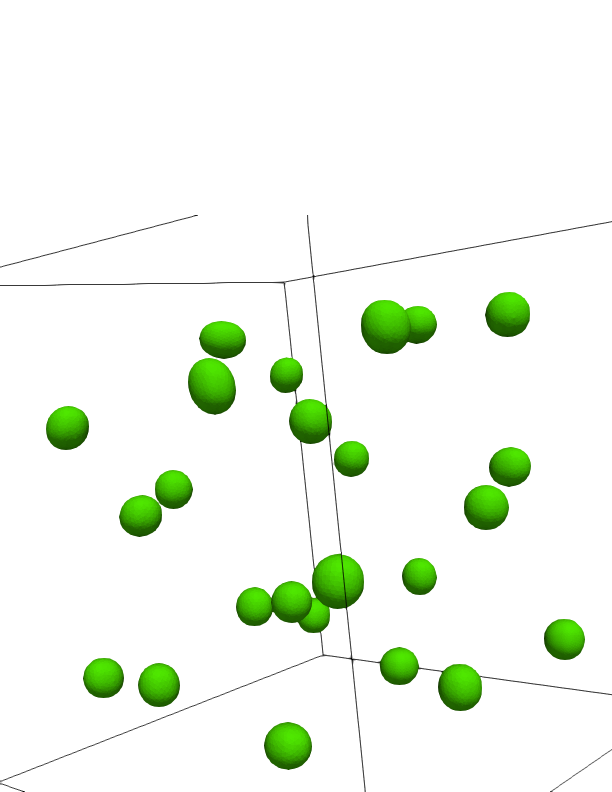}
\includegraphics[angle=-0,width=0.24\textwidth]{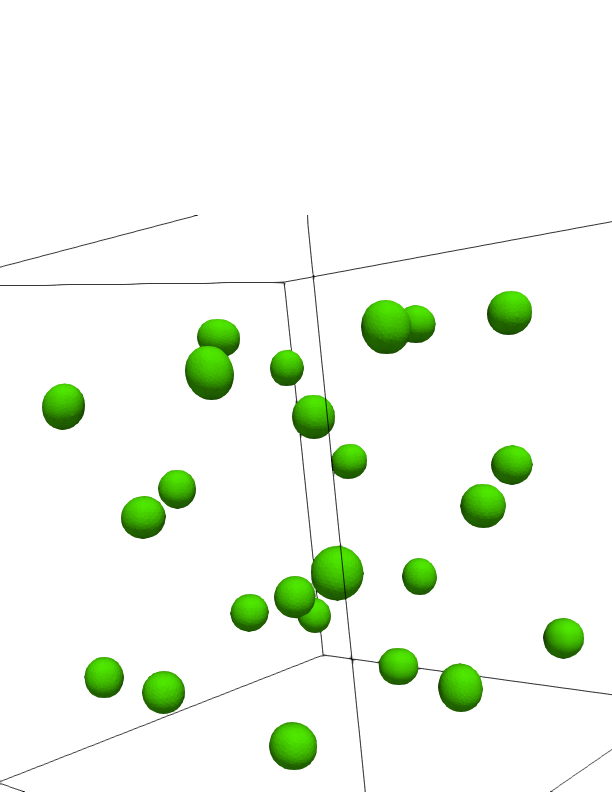}
\caption{Continuation of Figure~\ref{fig:3dtopo2offset}, 
at times $t= 29.57$, 30.26, 30.75,
30.97, 31.27, 36.75, 41.81, 48, 51.23, 53, 55, 60.
All but the last three snap shots show a time just before a pinch-off occurs.
}
\label{fig:3dtopo2offset2}
\end{figure}%

In our final two experiments, we take the same initial data as in
Figure~\ref{fig:3dtopo1round}, but now choose different physical parameters
in order to induce unstable growth. In particular, we attempt simulations that
either favour growth for modes with $\ell=3$ or with $\ell = 4$.
For the former case, we let
$\alpha = 0.05$, $m_\pm = 0.5$, $\rho_- =0.29$, $\rho_+ = 1$,
$S_- = 1$, $S_+ = -1$, $S_I = -3.7$ and observe the evolution shown in 
Figure~\ref{fig:3dell3round}.
\begin{figure}
\center
\mbox{
\hspace*{-1cm}
\includegraphics[angle=-0,width=0.2\textwidth]{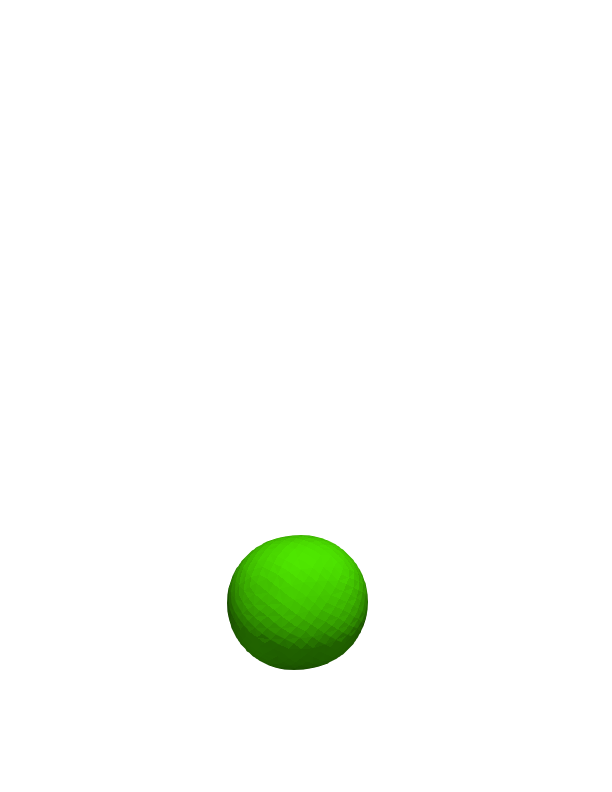}
\includegraphics[angle=-0,width=0.2\textwidth]{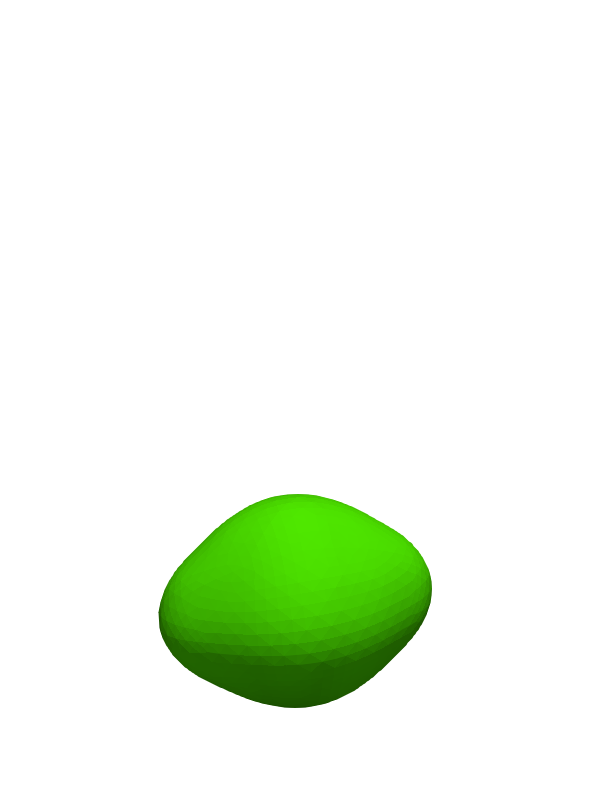}
\includegraphics[angle=-0,width=0.2\textwidth]{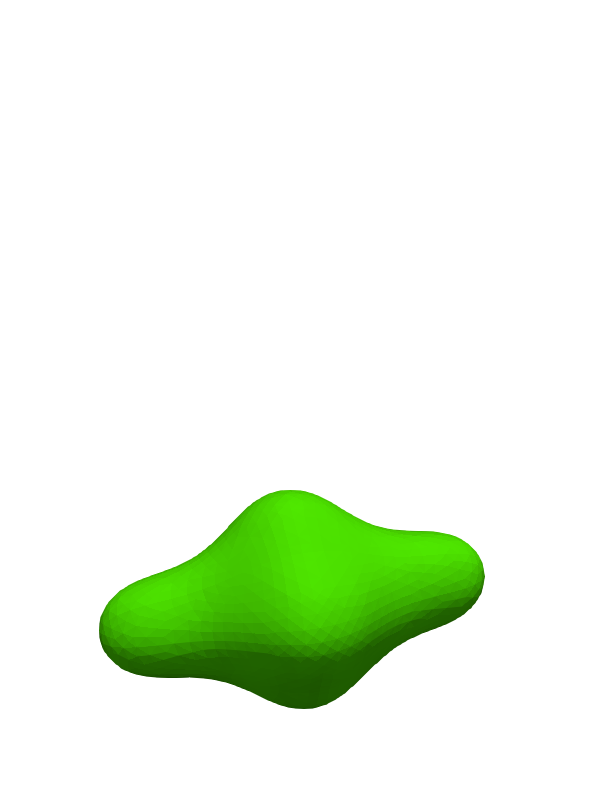}
\includegraphics[angle=-0,width=0.2\textwidth]{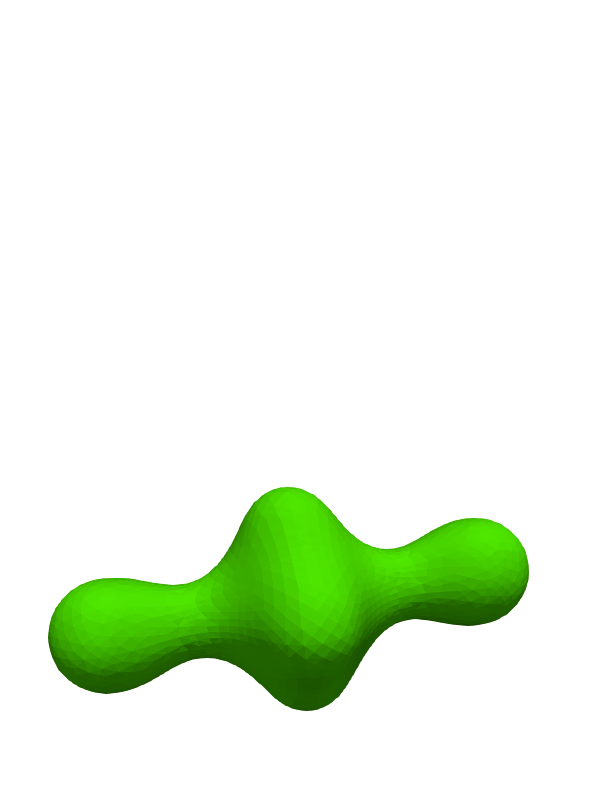}
\includegraphics[angle=-0,width=0.2\textwidth]{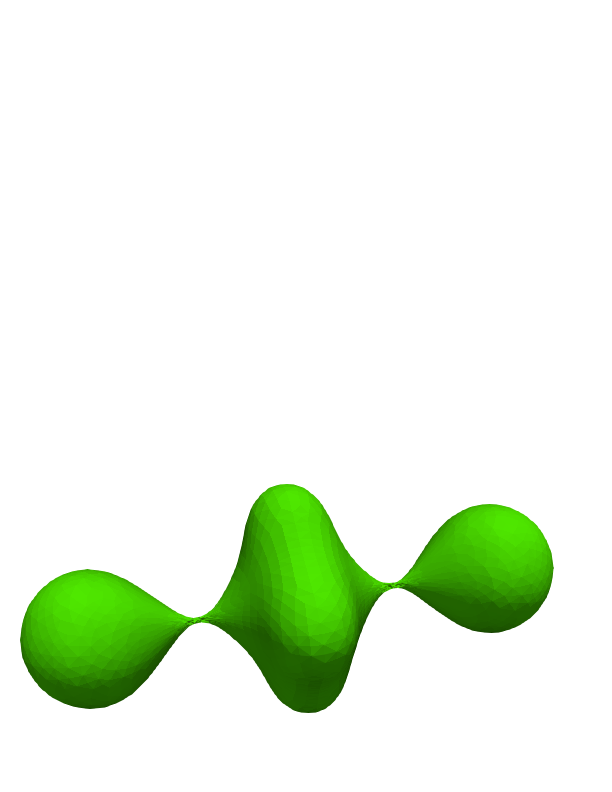}
}
\caption{($\Omega = \bB_4^3(0)$)
$\alpha = 0.08$, $m_\pm = 1$, $\rho_- =0.16$, $\rho_+ = 1$,
$S_- = 1$, $S_+ = -4$, $S_I = -5$.
We show the solution at times $t=0, 0.8, 1.2, 1.4, 1.53$.
}
\label{fig:3dell3round}
\end{figure}%
For the case $\ell=4$, on the other hand, we let
$\alpha = 0.08$, $m_\pm = 1$, $\rho_- =0.16$, $\rho_+ = 1$,
$S_- = 1$, $S_+ = -4$, $S_I = -5$ 
we observe the evolution shown in Figure~\ref{fig:3dell4round}.
In each case we show the evolution until just before the first simultaneous
pinch-offs occur. In Tables~\ref{tbl:3Dsingleradial_eigenl3} and \ref{tbl:3Dsingleradial_eigenl4}, we report on the values of the right-hand side of \eqref{radial:ode:perturb} for perturbation modes $\ell = 1, \dots, 10$ for the parameter settings of Figures~\ref{fig:3dell3round} and \ref{fig:3dell4round}, where we see that mode $\ell = 3$ (resp.~$\ell = 4$) has the largest amplification of the perturbations for the settings of Figure~\ref{fig:3dell3round} (resp.~Figure~\ref{fig:3dell4round}).
\begin{table}[h]
\centering
\begin{tabular}{|c|c|c|c|c|c|}
\hline
$\ell$ & $1$ & $2$ & $3$ & $4$ & 5 \\
\hline
& 0.2050 &   4.3153 &   \textbf{5.4376} &    1.9752 &   $-7.4842$  \\
\hline\hline
$\ell$ & 6 & 7 & 8 & 9 & 10 \\
\hline
& $-24.2574$ & $-49.7308$ & $-85.2324$ &$-132.1021$ & $-191.6791$ \\
\hline
\end{tabular}
\caption{Values of the right-hand side of \eqref{radial:ode:perturb} for perturbation modes $\ell = 1, \dots, 10$ for the setting of Figure~\ref{fig:3dell3round}. The largest value highlighted in bold corresponds to mode $\ell = 3$.}
\label{tbl:3Dsingleradial_eigenl3}
\end{table}
\begin{table}[h]
\centering
\begin{tabular}{|c|c|c|c|c|c|}
\hline
$\ell$ & $1$ & $2$ & $3$ & $4$ & 5 \\
\hline
 & $-0.0374$ &   5.8266  & 10.0378  & \textbf{11.9071} &  10.7068  \\
\hline\hline
$\ell$ & 6 & 7 & 8 & 9 & 10 \\
\hline
& 5.7831  & $-3.6066$ & $-18.1369$ & $-38.4965$ & $-65.3730$ \\
\hline
\end{tabular}
\caption{Values of the right-hand side of \eqref{radial:ode:perturb} for perturbation modes $\ell = 1, \dots, 10$ for the setting of Figure~\ref{fig:3dell4round}. The largest value highlighted in bold corresponds to mode $\ell = 4$.}
\label{tbl:3Dsingleradial_eigenl4}
\end{table}
\begin{figure}
\center
\mbox{
\hspace*{-1cm}
\includegraphics[angle=-0,width=0.2\textwidth]{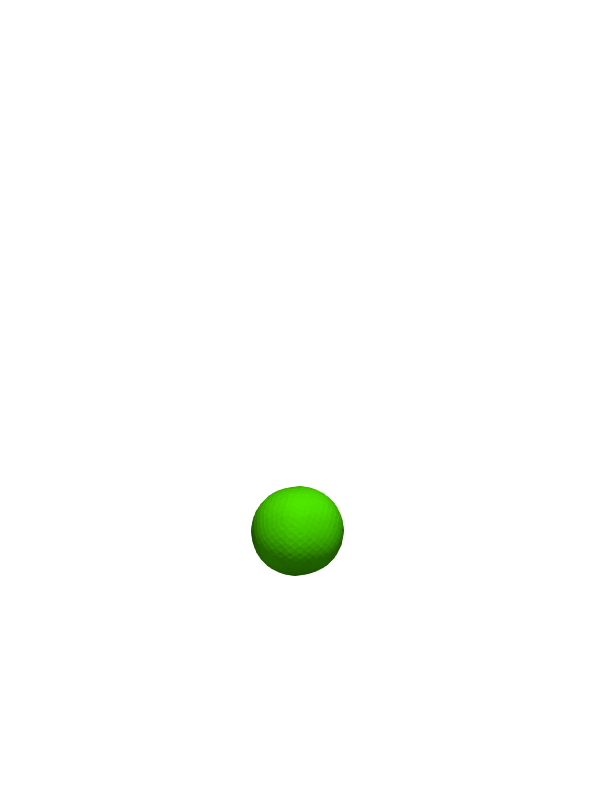}
\includegraphics[angle=-0,width=0.2\textwidth]{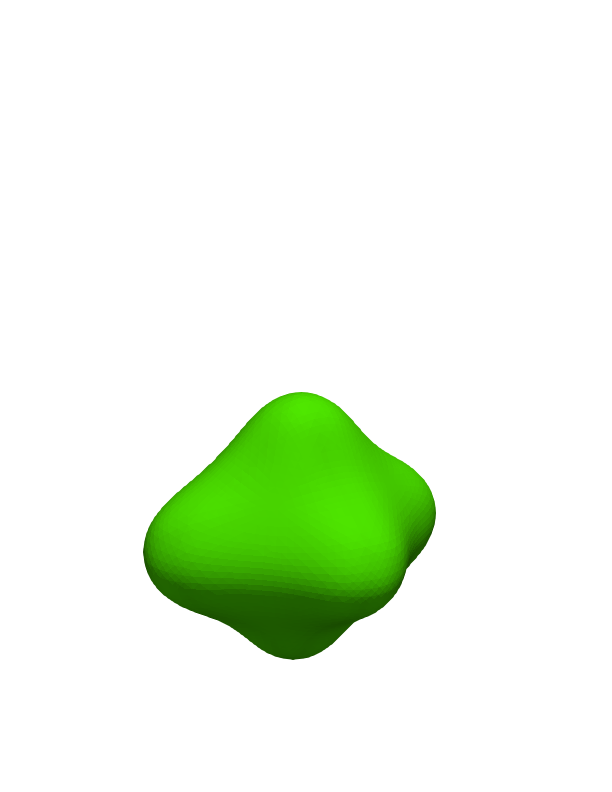}
\includegraphics[angle=-0,width=0.2\textwidth]{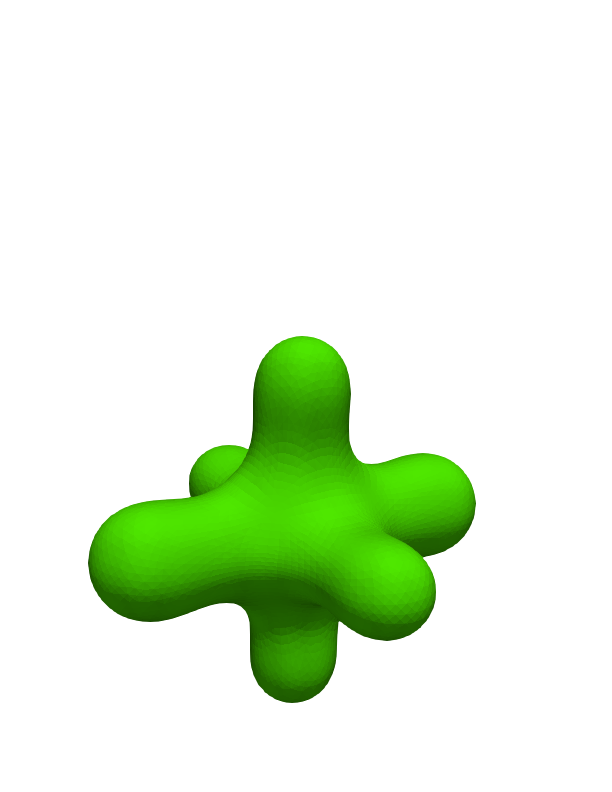}
\includegraphics[angle=-0,width=0.2\textwidth]{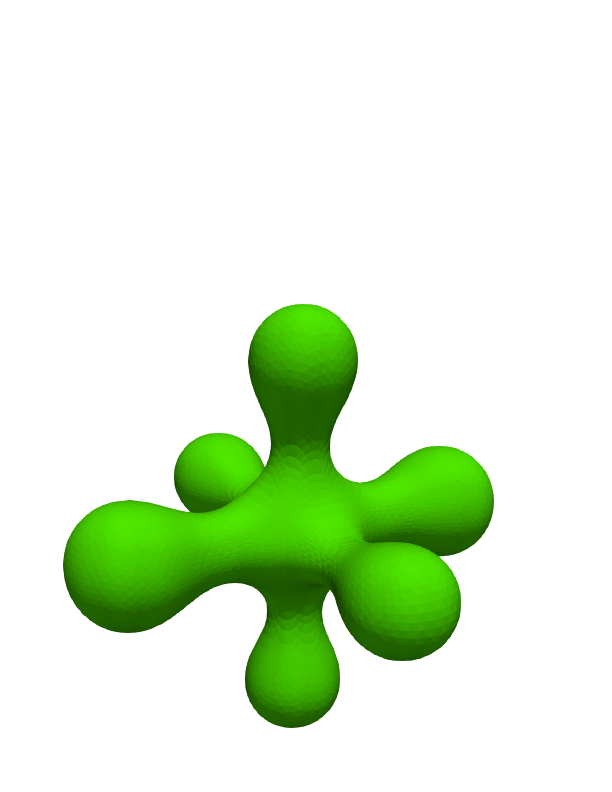}
\includegraphics[angle=-0,width=0.2\textwidth]{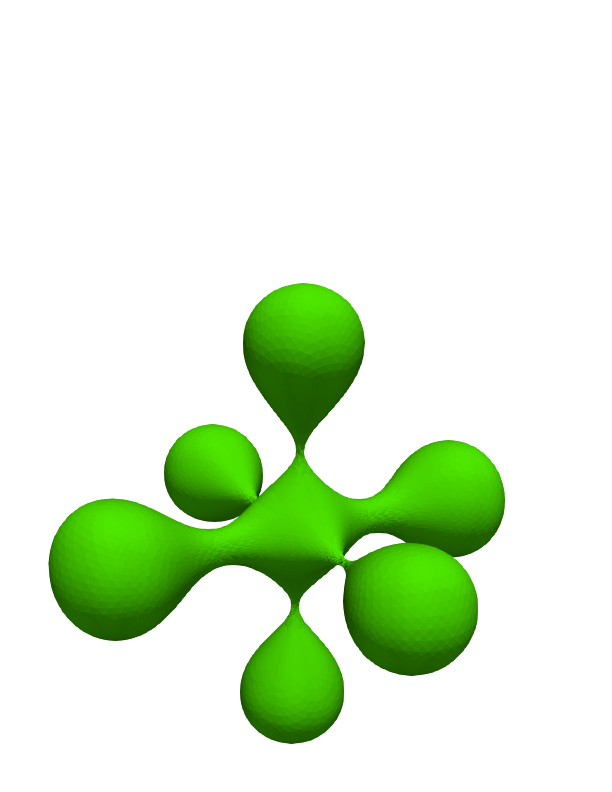}
}
\caption{($\Omega = \bB_4^3(0)$)
$\alpha = 0.08$, $m_\pm = 1$, $\rho_- =0.16$, $\rho_+ = 1$,
$S_- = 1$, $S_+ = -4$, $S_I = -5$.
We show the solution at times $t=0, 0.5, 0.7, 0.8, 0.88$.
}
\label{fig:3dell4round}
\end{figure}%


\section*{\bf Acknowledgements}
\noindent
AS gratefully acknowledge some support
from the GNAMPA (Gruppo Nazionale per l'Analisi Matematica, la Probabilit\`a e le loro Applicazioni) of INdAM (Isti\-tuto Nazionale di Alta Matematica)
project CUP E5324001950001,
from ``MUR GRANT Dipartimento di Eccellenza'' 2023-2027, and from the Alexander von Humboldt Foundation.
Additionally, AS appreciates affiliation with GNAMPA (Gruppo Nazionale per l'Analisi Matematica, la Probabilit\`a  e le loro Applicazioni) of INdAM (Istituto Nazionale di Alta Matematica).
{HG, KFL and AS benefited from stimulating discussions related to this work during their stay at the Erwin Schr\"odinger International Institute for Mathematics and Physics (ESI) in Vienna, and gratefully acknowledge the support and hospitality of the Institute.}

\appendix
\section{Radial symmetric solutions}
\subsection{Analytical formula for solutions}\label{app:radial:soln}
Suppressing the explicit dependence on $t$, we consider the ansatz where solutions to \eqref{linstab}--\eqref{linstab:6} are of the form
\begin{alignat}{3}
	\label{muplus:rad}
	\mu_+(r) & = b_{d,+}(q) \Iz \Lpr + e_+, \quad && r \in (0,q),
	\\
	\mu_-(r) & = b_{d,-}(q) \Iz \Lmr + c_{d,-}(q)  \Kz \Lmr + e_-, \quad && r \in (q,R),
	\label{muminus:rad}
\end{alignat}
for suitable constants $e_{\pm}$ and functions $b_{d,\pm}(q)$ and $c_{d,-}(q)$ of the interfacial location $q(t) \in (0,R)$ to be determined. We use the notation $\mu_+$ to denote $\mu$ in ${\Omega^+(t)}$ and $\mu_-$ to denote $\mu$ in ${\Omega^-(t)}$ for better presentation, and point out that the expressions for the coefficient functions $b_{d,\pm}$ and $c_{d,-}$ depend on the spatial dimension $d$. From the condition \eqref{sharp:in:cond} we note that the ansatz for $\mu_+$ in \eqref{muplus:rad} does not contain a term like $c_{d,+}(q)\Kz(\Lp r)$ due to its singular behavior at $r = 0$. \\

\noindent Substituting these into \eqref{lin:stab:1} and \eqref{lin:stab:1:bis} we infer that
\begin{align*}
- \frac{S_{+}}{m_+} + \Lp^2 \mu_+ & = \mu_{+}''(r) + \frac{d-1}{r} \mu_{+}'(r) = b_{d,+}(q) \Lp^2 \Iz(\Lp r) = \Lp^2(\mu_+(r) - e_+), \\
- \frac{S_{-}}{m_-} + \Lm^2 \mu_- & = \mu_{-}''(r) + \frac{d-1}{r} \mu_{-}'(r) = \Lm^2(\mu_-(r) - e_-),
\end{align*}
which leads to the identification of the constants $e_{\pm}$ as
\[
e_{\pm} = \frac{S_{\pm}}{\lambda_{\pm}^2 m_{\pm}} = \frac{S_{\pm}}{\rho_{\pm}}.
\]
Next, the external boundary condition \eqref{lin:stab:5} yields that
\begin{equation}\label{a11a12}
\begin{aligned}
0 & = \mu_-'(r = R) = \Lm \big ( b_{d,-}(q) \Iz' \LmR + c_{d,-}(q) \Kz' \LmR) \\
& = \lambda_- (b_{d,-}(q) \III_1(\lambda_-R) - c_{d,-}(q) \KKK_1(\lambda_-R)).
\end{aligned}
\end{equation}
From the interface condition \eqref{lin:stab:2}{,} we infer that
\begin{align}
	\label{sharp:proof:1:combined}
	b_{d,+} (q) \Iz \Lpq + e_+ &  = \alpha \frac {d-1}q,
	\\
	\label{sharp:proof:2:combined}
	 b_{d,-} (q) \Iz \Lmq 
	+ c_{d,-}(q)  \Kz \Lmq 	
	+ e_-  &=  \alpha \frac {d-1}q.
\end{align}
We can express \eqref{a11a12} and \eqref{sharp:proof:2:combined} as a linear system:
\begin{align*}
	\begin{pmatrix}
	\III_1(\lambda_-R) & -\KKK_1(\lambda_-R) \\
	\Iz(\lambda_-q) & \Kz(\lambda_-q)
	\end{pmatrix}
	\begin{pmatrix}
	b_{d,-} (q) \\
	c_{d,-} (q)
	\end{pmatrix}
	=
	\begin{pmatrix}
	0  \\
	\alpha \frac {d-1}q- e_-
	\end{pmatrix},
\end{align*}
while noting that \eqref{sharp:proof:1:combined} provides a formula for $b_{d,+}(q)$, leading to 
\begin{subequations} \label{bc:12:combined}
\begin{align}
	\label{bc:1:combined}
	b_{d,+}(q)  & = \Big( \alpha \frac {d-1}q - e_+\Big)  \frac 1 {\Iz \Lpq},
	\quad
	b_{d,-}(q)  =
	\frac {\KKK_1 \LmR}{\III_1 \LmR}c_{d,-}(q),
	\\ \label{bc:2:combined}
		c_{d,-}(q) & = \Big(\alpha \frac {d-1}q- e_-\Big)
	\frac {1}{\Iz \Lmq \frac {\KKK_1 \LmR}{\III_1 \LmR} + \Kz \Lmq}.
\end{align}
\end{subequations}
Therefore,
recalling that $e_\pm = \frac {S_\pm}{\rho_\pm}$, the identities {\eqref{bc:12:combined}}  yield the expressions in \eqref{radial:mu:sol}, and
we can express the ordinary differential equation \eqref{lin:stab:4} as
\begin{align}
\non        2 \dot{q}
	& =
	-\jump{{m_\pm} (\mu')(q)} +  S_I
	= -{m_+}\mu'_+ (q)
	+{m_-} \mu'_- (q)
	+ S_I
	\\ 
 \non       &
	=-m_+ \Lp  b_{d,+} (q)	\III_1 \Lpq
	+m_- \Lm  \big(
	b_{d,-}	\III_1 \Lmq
	- c_{d,-}	\KKK_1 \Lmq \big)
	+ S_I
	\\ 
 \non       & = -m_+ \Lp  \Big(
	\alpha \frac {d-1}q  -\frac{S_+}{\rho_+}
	\Big) \frac{\III_1 \Lpq}{\Iz \Lpq}
	\\ 
 & \quad
	+m_- \Lm  \Big(
	\alpha \frac {d-1}q  - \frac{S_-}{\rho_-}
	\Big) \frac{\III_1\Lmq \frac {\KKK_1\LmR}{\III_1\LmR} -\KKK_1\Lmq }{\Iz\Lmq  \frac {\KKK_1\LmR}{\III_1\LmR}+ \Kz \Lmq}
	+ S_I,
	\label{ode:combined}
\end{align}
which coincides with \eqref{radial:ode}.

\subsection{Solutions to the perturbed system on finite domains}\label{app:radial:perturb}
Here, we denote the stationary solution of the system by 
$\qstar$, as anticipated in {Section \ref{SEC:RAD}}.
Similar to $\mu_{\pm}$, we look for general solutions of the form
\begin{align*}
	U_+ (r) & =
	B_{d,+}(\qstar) \III_\ell(\Lp  r) ,\\
	U_- (r)  & =
	B_{d,-}(\qstar)  \III_\ell(\Lm r) + C_{d,-}(\qstar)  \KKK_\ell(\Lm r),
\end{align*}
where $B_{d,\pm}$ and $C_{d,-}$ indicate the unknown coefficients to be determined. We note that there is no term like $C_{d,+}(\qstar) \KKK_{\ell}(\Lp r)$ in $U_+$ due to the condition \eqref{linstab:6}.
We recall that $\III_{\ell}$ fulfill the ODE
\begin{align*}
	\Il''(r) + \frac {d-1} r \Il'(r)
+\frac{\zeta_{\ell,d}}{r^2} \Il(r) =  \Il(r) ,
	\quad
	 \Il(0)<\infty,
	 \quad
	 \ell\geq 0,
\end{align*}
where $\zeta_{\ell,d}$ is defined in \eqref{lap}. First, from the relation \eqref{linstab:extra} and the formula \eqref{radial:mu:sol} for $\mu_{\pm}$ we infer the relation
\begin{align}
	& \non
	B_{d,-}(\qstar) \Il \Lmqs  + C_{d,-}(\qstar) \Kl \Lmqs
	- B_{d,+}(\qstar) \Il \Lpqs
	\\ & \quad
	=  \Lp b_{d,+}(\qstar)  \III_1\Lpqs-
	 \Lm b_{d,-} (\qstar)\III_1\Lmqs
	+
	\Lm c_{d,-}(\qstar)  \KKK_1\Lmqs.
	\label{jump:lin:stab}
\end{align}
Next, from \eqref{linstab:3} we have that
\begin{subequations} \label{curv:lin:stab:12}
\begin{align}
	\label{curv:lin:stab:1}
	& \Lp b_{d,+} (\qstar)\III_1\Lpqs
	+ B_{d,+}(\qstar) \Il \Lpqs
	=
	- \alpha \frac {d-1}{(\qstar)^2} \Big( 1+ \frac {\zeta_{\ell,d}}{d-1}\Big),
	\\ \non
	& \Lm  b_{d,-}(\qstar) \III_1\Lmqs
	- \Lm c_{d,-}(\qstar)  \KKK_1\Lmqs
	+ B_{d,-} (\qstar)\Il \Lmqs
	+ C_{d,-}(\qstar) \Kl \Lmqs
	\\ & \quad
	=
	- \alpha \frac {d-1}{(\qstar)^2} \Big( 1+ \frac {\zeta_{\ell,d}}{d-1}\Big),
	\label{curv:lin:stab:2}
\end{align}
\end{subequations}
while from \eqref{linstab:5}, we also have
\begin{align}
	\label{bdcondR:lin:stab}
	 B_{d,-} (\qstar)\Il' \LmR +  C_{d,-}(\qstar) \Kl' \LmR =0.
\end{align}
Since the formula for $b_{d,\pm}(\qstar)$, $c_{d,-}(\qstar)$ are known and given in {\eqref{bc:12:combined}}, we can solve for $B_{d,+}(\qstar)$ as follows:
\begin{align}
	\label{def:Bplus:d}
	B_{d,+} (\qstar)
	=
	\Big(- \alpha \frac {d-1}{(\qstar)^2} \Big( 1+ \frac {\zeta_{\ell,d}}{d-1}\Big)
	- \Lp  b_{d,+}(\qstar) \III_1\Lpqs \Big) \frac 1{\Il \Lpqs}.
\end{align}
From \eqref{bdcondR:lin:stab} we find that
\begin{align*}
	 B_{d,-} (\qstar)= -  C_{d,-}(\qstar)\frac{\Kl' \LmR }{\Il' \LmR },
\end{align*}
and thus, using \eqref{curv:lin:stab:2} we find that
\begin{align}\label{def:Cminus:d}
	C_{d,-}  (\qstar)
	& = \frac{
	- \alpha \frac {d-1}{(\qstar)^2} \Big( 1+ \frac {\zeta_{\ell,d}}{d-1}\Big)
	-\Lm  b_{d,-}(\qstar) \III_1\Lmqs
	+ \Lm c_{d,-} (\qstar) \KKK_1\Lmqs }{	\Kl \Lmqs-  \frac{\Kl' \LmR }{\Il' \LmR } \Il \Lmqs },
\end{align}
which in turn provides an explicit formula for $B_{d,-}(\qstar)$.

Lastly, by substituting the explicit form \eqref{radial:mu:sol} into \eqref{linstab:4}, and using the properties {\eqref{Bessel2d3d}} we obtain
\begin{align*}
\non \frac{2\dot{\delta}}{\delta}  & = -\Big(m_+ (\mu^\star_+)''	-m_- (\mu^\star_-)''	\Big)\Big|_{r=\qstar} - \Big( m_+U_+'	- m_-U_-' \Big)\Big|_{r=\qstar}	\\ 
\non & = -m_+ \Lp^2 b_{d,+}(\qstar) \Iz'' \Lpqs+ m_-\Lm^2 [b_{d,-}(\qstar) \Iz'' \Lmqs+ c_{d,-}(\qstar) \Kz'' \Lmqs] \\ 
\non & \quad - m_+ \Lp B_{d,+} (\qstar)\Il' \Lpqs	+ m_-\Lm B_{d,-} (\qstar)\Il' \Lmqs+ m_-\Lm C_{d,-} (\qstar)\Kl' \Lmqs	\\ 
\non & =	-m_+ \Lp^2 b_{d,+}(\qstar) \III_1' \Lpqs	+ m_-\Lm^2 [b_{d,-}(\qstar) \III_1'  \Lmqs - c_{d,-}(\qstar) \KKK_1' \Lmqs] \\ 
\non & \quad - m_+ \Lp B_{d,+} (\qstar)\Il' \Lpqs	+ m_-\Lm B_{d,-} (\qstar)\Il' \Lmqs	+ m_-\Lm C_{d,-} (\qstar)\Kl' \Lmqs	\\ 
\non & =	-m_+ \Lp \Big( \Lp b_{d,+}(\qstar) \III_1' \Lpqs	+ B_{d,+} (\qstar)\Il' \Lpqs \Big)	\\ 
\non & \quad + m_-\Lm \Big( \Lm [b_{d,-}(\qstar) \III_1'  \Lmqs- c_{d,-}(\qstar) \KKK_1' \Lmqs] \Big)\\ 
& \quad	+ m_-\Lm \Big( B_{d,-} (\qstar)\Il' \Lmqs+C_{d,-} (\qstar)\Kl' \Lmqs \Big),
\end{align*}
which coincides with \eqref{radial:ode:perturb} when we insert the expressions for $b_{d,\pm}$, $c_{d,-}$, $B_{d,\pm}$ and $C_{d,-}$.

\subsection{On the ratio of modified Bessel functions and their derivatives}
\begin{lem}\label{lem:Bessel}
For a fixed index $j \geq 0$, let $\III_j$ and $\KKK_j$ denote the modified Bessel functions of the first and second kind, respectively. Then, it holds that for all $x > 0$,
\[
\frac{\III'_{j+1}(x)}{\III_{j+1}(x)} > \frac{\III'_j(x)}{\III_j(x)}, \quad \frac{\KKK'_{j+1}(x)}{\KKK_{j+1}(x)} < \frac{\KKK'_j(x)}{\KKK_j(x)}. \]
\end{lem}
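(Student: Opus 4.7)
Since $\III_j,\III_{j+1},\KKK_j,\KKK_{j+1}$ are strictly positive on $(0,\infty)$, the lemma is equivalent to the strict positivity of
\[
h_j(x) := \III_{j+1}'(x)\III_j(x) - \III_j'(x)\III_{j+1}(x)
\]
and the strict negativity of
\[
\tilde h_j(x) := \KKK_{j+1}'(x)\KKK_j(x) - \KKK_j'(x)\KKK_{j+1}(x).
\]
My plan is to derive a first-order linear ODE for each of these Wronskian-like quantities and then integrate using standard boundary data.

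Differentiating $h_j$ and using the cancellation of the $\III_{j+1}'\III_j'$ contribution gives $h_j' = \III_{j+1}''\III_j - \III_j''\III_{j+1}$. I would then substitute the (modified) Bessel equation satisfied by $\III_\nu$, written in the uniform form $\III_\nu'' = -(p/x)\III_\nu' + (1 + q(\nu)/x^2)\III_\nu$, where $p$ is independent of $\nu$ and $q(\nu) \in \{\nu^2,\,\nu(\nu+1)\}$ according to whether $d=2$ or $d=3$. The $\III_\nu'$-terms collapse back into a multiple of $h_j$, while the $\III_\nu$-terms produce a single positive multiple of $\III_j\III_{j+1}$, yielding the clean divergence-form identity
\[
(x^p h_j)' = x^{p-2}\bigl(q(j+1) - q(j)\bigr)\,\III_j(x)\,\III_{j+1}(x),
\]
whose right-hand side is strictly positive on $(0,\infty)$ since $q(j+1) - q(j) > 0$. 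The identical computation, with $\KKK$ in place of $\III$, produces the analogous identity for $\tilde h_j$.

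It remains to integrate, and here the two cases are distinguished by the appropriate boundary. For $h_j$, the small-argument behavior $\III_\nu(x) = O(x^\nu)$ guarantees that $x^p h_j(x) \to 0$ as $x \to 0^+$, so integrating from $0$ to $x$ gives $x^p h_j(x) > 0$, and hence $h_j > 0$. For $\tilde h_j$, the function $\KKK_\nu$ is singular at the origin, so integration from $0$ is not possible; instead, both $\KKK_\nu$ and $\KKK_\nu'$ decay exponentially as $x \to \infty$, so $x^p \tilde h_j(x) \to 0$ there. Integrating from $x$ to $\infty$ then forces $x^p \tilde h_j(x) < 0$, and hence $\tilde h_j < 0$.

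The heart of the argument is the collapsing computation that produces the clean identity for $(x^p h_j)'$, and this is a direct consequence of the defining ODE with no obstacle anticipated. The only mildly delicate step is the boundary verification, but the small-argument statement for $h_j$ needs only the leading Frobenius term of $\III_\nu$, and the large-argument statement for $\tilde h_j$ follows from the well-known exponential decay of $\KKK_\nu$ and $\KKK_\nu'$; both can be read off from the asymptotics recorded in \cite{WatsonBessel} and \cite{NIST:DLMF}.
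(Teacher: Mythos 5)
Your proof is correct and follows essentially the same route as the paper's: derive a divergence-form identity for the Wronskian $W=y_j y_{j+1}'-y_j'y_{j+1}$ from the Bessel ODE, then integrate from $0$ for $\III$ (using $\III_\nu=O(x^\nu)$) and from $\infty$ for $\KKK$ (using exponential decay). The paper attributes this to Amos and writes the identity as $y_j(ry_{j+1}')'-y_{j+1}(ry_j')'=\tfrac{Q_{j,d}}{r}y_jy_{j+1}$ with $Q_{j,d}=q(j{+}1)-q(j)$; your $x^p$ formulation is the same computation, just packaged more uniformly.
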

\begin{proof}
We follow the argument {on} page 242 of \cite{Amos}. For a fixed index $j \geq 0$, let $y_j$ denote a solution to the modified Bessel differential equation. {A} short calculation verifies the following identity:
\[
y_{j} (r y_{j+1}')' - y_{j+1} (r y_j')' = \frac{Q_{j,d}}{r} y_{j} y_{j+1}, \quad Q_{j,d} = \begin{cases} 2 j + 1& \text{ if } d = 2, \\
2 (j + 1) & \text{ if } d = 3.
\end{cases}
\]
For modified Bessel functions of the first kind, i.e., $y_j = \III_j$, integrating and applying Green's theorem on the left-hand side results in
\[
\Big [r(y_j y_{j+1}' - y_j' y_{j+1} ) \Big ]_{r=0}^{r=x} = \int_0^x  y_j (r y_{j+1}')' - y_{j+1} (r y_j')'  = \int_0^x \frac{Q_{j,d}}{r} y_j y_{j+1},
\]
{which} thanks to $\III_j(0) = \III_j'(0) = 0$ leads to 
\[
(y_{j} y_{j+1}' - y_j' y_{j+1})(x) = \frac{Q_{j,d}}{x} \int_0^x \frac{y_j y_{j+1}}{r}.
\]
As $\{\III_j\}_{j \geq 0}$ are positive functions, the right-hand side is also positive. Hence, we deduce the inequality
\[
(y_j y_{j+1}' - y_j' y_{j+1})(x) >0, \quad x > 0,
\]
which in turn implies that 
\[
(\III_{j+1} \III_j' - \III_{j} \III_{j+1}')(x) < 0, \quad x > 0.
\]
Since $\III_{j}$ and $\III_{j+1}$ are positive over $(0,\infty)$ we obtain
\[
0 > \frac{(\III_{j+1} \III_j' - \III_{j} \III_{j+1}')(x)}{\III_j(x) \III_{j+1}(x)} = \frac{\III'_j(x)}{\III_j(x)} - \frac{\III'_{j+1}(x)}{\III_{j+1}(x)}.
\]
Meanwhile, for the modified Bessel functions of the second kind, i.e., $y_j = \KKK_j$, we note that $\lim_{x \to \infty} \KKK_j(x)= 0$ and $\lim_{x \to \infty} \KKK_j'(x) = 0$. Hence, integrating instead from $x$ to $\infty$ we find analogously
\[
- (y_j y_{j+1}' - y_j' y_{j+1})(x) = \frac{Q_{j,d}}{x} \int_{x}^\infty \frac{y_j y_{j+1}}{r} > 0.
\]
This indicates that
\[
 (\KKK_{j+1} \KKK_{j}' - \KKK_{j} \KKK_{j+1}')(x)> 0.
\]
Since $\KKK_{j}$ and $\KKK_{j+1}$ are positive over $(0,\infty)${,} we deduce that 
\[
0 < \frac{(\KKK_{j+1} \KKK_{j}' - \KKK_{j} \KKK_{j+1}')(x)}{\KKK_j(x) \KKK_{j+1}(x)} = \frac{\KKK_j'(x)}{\KKK_{j}(x)} - \frac{\KKK_{j+1}'(x)}{\KKK_{j+1}(x)}.
\]
\end{proof}
\begin{cor}\label{cor:Bessel}
For any $\ell \in \NNN$, $\ell > 1$, it holds that
\[
\III_1'(x) \III_{\ell}(x) - \III_1(x) \III_{\ell}'(x) < 0, \quad \KKK_1'(x) \KKK_{\ell}(x) - \KKK_1(x) \KKK_{\ell}'(x) > 0, \quad \forall \ell > 1, \quad \forall x > 0.
\]
\end{cor}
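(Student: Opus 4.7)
The plan is to derive this as a direct consequence of Lemma \ref{lem:Bessel} by chaining the one-step inequalities from $j=1$ up to $j=\ell-1$. The two target inequalities are ratios in disguise: after dividing by the strictly positive quantities $\III_1(x)\III_\ell(x)$ and $\KKK_1(x)\KKK_\ell(x)$ respectively, they become
\[
\frac{\III_1'(x)}{\III_1(x)} < \frac{\III_\ell'(x)}{\III_\ell(x)}, \qquad \frac{\KKK_1'(x)}{\KKK_1(x)} > \frac{\KKK_\ell'(x)}{\KKK_\ell(x)}, \qquad x>0.
\]
So the task reduces to establishing strict monotonicity of $j \mapsto \III_j'/\III_j$ and anti-monotonicity of $j \mapsto \KKK_j'/\KKK_j$ for $j \geq 1$.

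First I would apply Lemma~\ref{lem:Bessel} with index $j=1,2,\ldots,\ell-1$ in succession. This yields the telescoping chain
\[
\frac{\III_1'(x)}{\III_1(x)} < \frac{\III_2'(x)}{\III_2(x)} < \cdots < \frac{\III_\ell'(x)}{\III_\ell(x)},
\]
and analogously in the opposite direction for the $\KKK_j$ ratios. Multiplying back by the positive products $\III_1(x)\III_\ell(x)$ and $\KKK_1(x)\KKK_\ell(x)$ recovers precisely the two claimed inequalities. The positivity of $\III_j$ and $\KKK_j$ on $(0,\infty)$ needed here is the same one invoked in the proof of Lemma~\ref{lem:Bessel}, so it is already available.

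There is no real obstacle: the lemma has already done the analytic work (using the Wronskian-type identity together with the boundary behavior at $0$ and $\infty$), and the corollary is just its iterated form. The only point deserving a brief remark is that the strict inequalities are preserved under iteration, which is immediate. Thus the whole proof amounts to two lines plus the observation that sign-dividing by a positive quantity is an equivalence. I would present it as a two-sentence deduction, with perhaps an explicit display of the telescoping to make the corollary self-contained for the reader.
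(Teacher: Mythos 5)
Your proposal is correct and is essentially the paper's own argument: chain (by induction) the one-step ratio inequalities from Lemma~\ref{lem:Bessel} to get $\III_1'/\III_1<\III_\ell'/\III_\ell$ and $\KKK_1'/\KKK_1>\KKK_\ell'/\KKK_\ell$, then multiply back by the positive products $\III_1\III_\ell$ and $\KKK_1\KKK_\ell$ to recover the stated cross-difference inequalities. The paper compresses this to ``via an induction argument\ldots and hence upon rearranging,'' which is exactly your telescoping chain plus the sign-preserving division step.
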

\begin{proof}
Via an induction argument with Lemma \ref{lem:Bessel}, we see that
\[
\frac{\III_1'(x)}{\III_1(x)} < \frac{\III_{\ell}'(x)}{\III_{\ell}(x)},  \quad \frac{\KKK_1'(x)}{\KKK_1(x)} > \frac{\KKK_{\ell}'(x)}{\KKK_{\ell}(x)}, \quad \forall \ell > 1, \quad \forall x> 0,
\]
and hence upon rearranging we obtain
\[
\III_1'(x) \III_{\ell}(x) - \III_1(x) \III_{\ell}'(x) < 0, \quad \KKK_1'(x) \KKK_{\ell}(x) - \KKK_1(x) \KKK_{\ell}'(x) > 0, \quad \forall \ell > 1, \quad \forall x > 0.
\]
\end{proof}

\section{Radial multilayered solutions}
\subsection{Analytical formula for solutions}\label{app:radialshell}
Suppressing the explicit dependence on $t$, we postulate the following ansatz for the solutions
\begin{subequations} \label{ansatz:shell:mu}
\begin{align}
	\label{ansatz:shell:mumin}
	\mumin(r) & =  \bmin(q_1) \Iz (\Lm r) 
	+\emin,
	\quad 
	&& r \in (0,q_1(t)),
	\\
	\label{ansatz:shell:mup}
	\mu_+(r) & = b_{d,+}\qud \Iz(\Lp r) 
	+c_{d,+}\qud \Kz(\Lp r)  
	+ e_+,
	\quad 
	&& r \in (q_1(t),q_2(t)),
	\\
	\label{ansatz:shell:mumex}
	\mumex (r) & = \bmex (q_2) \Iz(\Lm r)  
	+	\cmex (q_2) \Kz(\Lm r) 
	+\emex ,
	\quad 
	&& r \in (q_2(t),R),
\end{align}
\end{subequations}
where, similar to Appendix \ref{app:radial:soln}, in accordance with \eqref{rad:shell:11}  we highlight that the ansatz for $\mumin$ does not contain a term like $c^{{\rm in}}_{d,+} (q_1) \Kz (\Lm r)$ as this is singular at the origin.
We then proceed as before by inserting the expressions {\eqref{ansatz:shell:mu}} into the system  \eqref{rad:shell} to derive the expressions of the unknown coefficients and functions. Substituting the ansatz for $\mumin$, $\mu_+$ and $\mumex$ into \eqref{rad:shell:1}--\eqref{rad:shell:3} yields the identification of the constants
\[
\emin = \emex = \frac{S_-}{\rho_-}, \quad e_+ = \frac{S_+}{m_+}.
\]
Next, using the conditions \eqref{rad:shell:4} at the free boundary $\{r = q_1(t)\}$ we obtain from the expressions \eqref{ansatz:shell:mumin} and \eqref{ansatz:shell:mup} the following relations
\begin{align}
\label{rad:bmin} & \bmin(q_1) \Iz(\Lm q_1) = \mumin(q_1) - \frac{S_-}{\rho_-} = - \alpha \frac{d-1}{q_1}  - \frac{S_-}{\rho_-}, \\
\label{rad:bcplus} & b_{d,+}\qud \Iz(\Lp q_1)
	+c_{d,+}\qud \Kz(\Lp q_1) + \frac{S_+}{\rho_+} = - \alpha \frac{d-1}{q_1}.
\end{align}
Notice that $\bmin(q_1)$ can be solved directly:
\begin{align}\label{bmin}
	\bmin (q_1)  = \Big(-\alpha\frac{d-1}{q_1} - \frac{S_-}{\rho_-} \Big) \frac 1{\Iz(\Lm  q_1)}.
\end{align}
Then, from \eqref{rad:shell:5} at the free boundary $\{ r = q_2(t)\}$ we obtain 
\begin{align}
	\label{rad:secondstep:1}
	& b_{d,+}\qud \Iz (\Lp q_2) 
	+c_{d,+}\qud \Kz(\Lp q_2)
	 = 
	\alpha\frac{d-1}{q_2}
	- \frac{S_+}{\rho_+},
	\\
    	\label{shell:comp:cond}
	& \bmex (q_2) \Iz(\Lm q_2) 
	+	\cmex (q_2) \Kz(\Lm q_2)  
	= \alpha\frac{d-1}{q_2} 
	- \frac{S_-}{\rho_-}.
\end{align}
We are now in a position to determine $b_{d,+} $ and $c_{d,+}$, which can be achieved by combining \eqref{rad:bcplus} with \eqref{rad:secondstep:1}.
First, we point out the  useful identities 
\begin{align*}
& \Iz (\Lp q_1)\Kz (\Lp q_2) - \Kz (\Lp q_1)  \Iz (\Lp q_2)
\\ & \quad
= \Big( \frac{\Iz (\Lp q_1)}{ \Kz (\Lp q_1)}
- \frac {\Iz (\Lp q_2)}{\Kz (\Lp q_2)}\Big) \big(\Kz (\Lp q_1)\Kz(\Lp q_2)\big)
\\ & \quad
= \Big( \frac{\Kz (\Lp q_2)}{ \Iz (\Lp q_2)}
- \frac {\Kz (\Lp q_1)}{\Iz (\Lp q_1)}\Big) \big(\Iz (\Lp q_1)\Iz (\Lp q_2)\big).
\end{align*}
Then, expressing \eqref{rad:bcplus} and \eqref{rad:secondstep:1} in matrix form:
\begin{align*}
\begin{pmatrix}
\Iz(\Lp q_1) & \Kz(\Lp q_1) \\
\Iz(\Lp q_2) & \Kz(\Lp q_2)
\end{pmatrix}
\begin{pmatrix}
b_{d,+} \qud \\ c_{d,+} \qud
\end{pmatrix} = \begin{pmatrix} - \alpha \frac{d-1}{q_1} - \frac{S_+}{\rho_+} \\[1ex] 
\alpha \frac{d-1}{q_2} - \frac{S_+}{\rho_+}
\end{pmatrix},
\end{align*}
we find
\begin{align*}
	b_{d,+}\qud & 
	 = \Big( \frac{\Iz (\Lp q_1)}{ \Kz (\Lp q_1)} 
- \frac {\Iz (\Lp q_2)}{\Kz (\Lp q_2)}\Big)^{-1}\\
& \quad \times  \Big(\frac 1{\Kz(\Lp q_1)} \Big(-\alpha\frac{d-1}{q_1} - \frac{S_+}{\rho_+} \Big) - \frac1{\Kz(\Lp q_2)}  \Big(\alpha\frac{d-1}{q_2} - \frac{S_+}{\rho_+} \Big)\Big),
	\\
	c_{d,+}\qud & =
	\Big( \frac{\Kz (\Lp q_2)}{ \Iz (\Lp q_2)}
- \frac {\Kz (\Lp q_1)}{\Iz (\Lp q_1)}\Big)^{-1} \\
& \quad \times  \Big(\frac 1{\Iz(\Lp q_2)} \Big(\alpha\frac{d-1}{q_2} - \frac{S_+}{\rho_+} \Big) - \frac1{\Iz(\Lp q_1)}  \Big(-\alpha\frac{d-1}{q_1} - \frac{S_+}{\rho_+} \Big)\Big).
\end{align*}
Lastly, we make use of the external boundary condition \eqref{rad:shell:10} to see that 
\begin{align*}
	0& =(\mumex)'(r=R) = \Lm \bmex (q_2) \Iz'(\Lm R) 
	+	\Lm \cmex (q_2) \Kz'(\Lm R) \\
    & =  \Lm \bmex (q_2) \III_1(\Lm R)  -\Lm \cmex (q_2) \KKK_1(\Lm R).
	\end{align*}
Expressing this and \eqref{shell:comp:cond} in matrix form:
\begin{align*}
	\begin{pmatrix}
	\Iz (\Lm q_2) & \Kz (\Lm q_2) \\
	\III_1 (\Lm R) & -\KKK_1 (\Lm R) 
	\end{pmatrix}
	\begin{pmatrix}
	\bmex (q_2)\\
	\cmex (q_2)
	\end{pmatrix}
	=
	\begin{pmatrix}
	\alpha\frac{d-1}{q_2} 
	- \frac{S_-}{\rho_-}
	\\
	0
	\end{pmatrix},
\end{align*}
and thus, we infer 
\begin{align}
	\label{bmex:sol}
	\bmex (q_2) & = \Big( \frac{\Iz (\Lm q_2)}{ \Kz (\Lm q_2)}
+ \frac {\III_1 (\Lm R)}{\KKK_1 (\Lm R)}\Big)^{-1} \Big( \alpha\frac{d-1}{q_2} 
	- \frac{S_-}{\rho_-} \Big) \frac 1 {\Kz (\Lm q_2)} ,
	\\
	\label{cmex:sol}
	\cmex (q_2) & =  \Big( \frac{\KKK_1 (\Lm R)}{ \III_1 (\Lm R)}
+ \frac {\Kz (\Lm q_2)}{\Iz (\Lm q_2)}\Big)^{-1}
	\Big( \alpha\frac{d-1}{q_2} 
	- \frac{S_-}{\rho_-} \Big)\frac 1{\Iz (\Lm q_2)},
\end{align}
where we again owe to a suitable 
factorizations of the following terms
\begin{align*}
& \Iz (\Lm q_2)\KKK_1 (\Lm R) + \Kz (\Lm q_2)  \III_1 (\Lm R)
\\ & \quad
= \Big( \frac{\Iz (\Lm q_2)}{ \Kz (\Lm q_2)}
+ \frac {\III_1 (\Lm R)}{\KKK_1 (\Lm R)}\Big) \big(\Kz (\Lm q_2)\KKK_1 (\Lm R)\big)
\\ & \quad
= \Big( \frac{\KKK_1 (\Lm R)}{ \III_1 (\Lm R)}
+ \frac {\Kz (\Lm q_2)}{\Iz (\Lm q_2)}\Big) \big(\Iz (\Lm q_2)\III_1 (\Lm R)\big).
\end{align*}
With these expression we obtain the analytical formulas in {\eqref{def:shell:mu}}. 

\subsection{Solutions to the perturbed system on finite domains}\label{app:radialshell:perturb}
Below, we denote the stationary solutions of the system by 
$\qstaru$ and 
$\qstard$, as anticipated in {Section \ref{SEC:RAD:MULTI}}.
We look for general solutions of the form
\begin{align*}
	\Uin (r) & =
	B^{{\rm in}}_{d,-}(\qstaru) \Iell(\Lm  r) ,
	\\
	\Upl (r)  & =
	B_{d,+}\qudst  \Iell(\Lp r) + C_{d,+}\qudst  \Kell(\Lp r),
	\\
	\Uext (r)  & =
	B_{d,-}^{{\rm ext}}(\qstard)  \Iell(\Lm r) + C_{d,-}^{{\rm ext}}(\qstard)  \Kell(\Lm r),
\end{align*}
where $B^{ {\rm in}}_{d,-}(\qstaru), B_{d,+}\qudst , C_{d,+}\qudst, B^{{\rm ext}}_{d,-}(\qstard)$, and $C^{{\rm ext}}_{d,-}(\qstard)$ indicate the unknowns to be determined. As before {the fact that} the ansatz for $\Uin(r)$ does not contain a term like $C^{{\rm in}}_{d,-}(\qstaru) \Kell(\Lm r)$ is due to the boundary condition \eqref{linstab:shell:9} at the origin.
First, from the relations \eqref{linstab:shell:extra:1} and \eqref{linstab:shell:extra:2}, the formulas {\eqref{ansatz:shell:mu}}, as well as $\III_0'(r) = \III_1$ and $\KKK_0'(r) = - \KKK_1(r)$, we obtain 
\begin{align}
	& \non
	B_{d,+}\qudst  \Iell(\Lp \qstaru) + C_{d,+}\qudst  \Kell(\Lp \qstaru)
	- B^{{\rm in}}_{d,-}(\qstaru) \Iell(\Lm  \qstaru) 
	\\ & \quad
	=  [\Lm b_{d,-}^{{\rm in}}(\qstaru)  \III_1(\Lm \qstaru)-
	 \Lp b_{d,+} \qudst\III_1 (\Lp \qstaru)
	+
	\Lp c_{d,+} \qudst  \KKK_1 (\Lp \qstaru)]y_1, 
	\label{jump:shell:stab:1} \\
		& \non
	B_{d,+}\qudst  \Iell(\Lp \qstard) + C_{d,+}\qudst  \Kell(\Lp \qstard)
	- B^{{\rm ext}}_{d,-}(\qstard) \Iell(\Lm  \qstard) - C^{{\rm ext}}_{d,-}(\qstard) \Kell(\Lm \qstard)
	\\ & \quad
	\notag =  [\Lm b_{d,-}^{{\rm ext}}(\qstard)  \III_1(\Lm \qstard) - \Lm c_{d,-}^{{\rm ext}}(\qstard) \KKK_1(\Lm \qstard)] y_2 \\
	& \qquad  -[ 
	 \Lp b_{d,+} \qudst\III_1 (\Lp \qstard)
	-
	\Lp c_{d,+}\qudst  \KKK_1 (\Lp \qstard)]y_2.
	\label{jump:shell:stab:2} 
\end{align} 
Next, from \eqref{linstab:shell:4} and \eqref{linstab:shell:5} we have that
\begin{align}
	& 
	\Lm  b_{d,-}^{{\rm in}} (\qstaru)\III_1(\Lm \qstaru) y_1
	+ B_{d,-}^{{\rm in}}(\qstaru) \Iell(\Lm  \qstaru) 
		\label{lin:shell:1}
	= \alpha \frac {d-1}{(\qstaru)^2} \Big( 1 + \frac {\zeta_{\ell,d}}{d-1}\Big) y_1,
	\\
		& \non
	[\Lp  b_{d,+} \qudst\III_1 (\Lp \qstaru) 
		- \Lp  c_{d,+} \qudst\KKK_1(\Lp\qstaru) ]y_1 \\
		& \label{lin:shell:2} \qquad 
	+ B_{d,+} \qudst \Iell(\Lp \qstaru) + C_{d,+}\qudst  \Kell(\Lp \qstaru)	
	= \alpha \frac {d-1}{(\qstaru)^2} \Big( 1 + \frac {\zeta_{\ell,d}}{d-1}\Big)  y_1,
	\\
		& \non
	[\Lp  b_{d,+} \qudst\III_1 (\Lp \qstard)
	- \Lp  c_{d,+} \qudst\KKK_1(\Lp\qstard)] y_2
	\\ &  	\label{lin:shell:3} \qquad + B_{d,+} \qudst \Iell(\Lp \qstard)+ C_{d,+}\qudst  \Kell(\Lp \qstard)
	= -\alpha \frac {d-1}{(\qstard)^2} \Big( 1 + \frac {\zeta_{\ell,d}}{d-1}\Big)  y_2,
	\\
		& \non
	[\Lm  b_{d,-}^{{\rm ext}} (\qstard)\III_1(\Lm \qstard)
	- \Lm  c_{d,-}^{{\rm ext}} (\qstard)\KKK_1(\Lm \qstard)] y_2
	\\ &  	\label{lin:shell:4} \qquad + B_{d,-}^{{\rm ext}}(\qstard) \Iell(\Lm  \qstard)
	+ C_{d,-}^{{\rm ext}}(\qstard) \Kell(\Lm  \qstard) 
	= -\alpha \frac {d-1}{(\qstard)^2} \Big( 1 + \frac {\zeta_{\ell,d}}{d-1}\Big) y_2,
\end{align}
while from \eqref{linstab:shell:8}, we also have
\begin{align}
	\label{shell:lin:stab:bdcond}
	 B_{d,-}^{{\rm ext}} (\qstard)\Il' \LmR +  C_{d,-}^{{\rm ext}}(\qstard) \Kl' \LmR =0.
\end{align}
Since $b_{d,-}^{{\rm in}} (\qstaru)$ is known, we can solve \eqref{lin:shell:1} in terms of $B_{d,-}^{{\rm in}}(\qstaru)$
and obtain
\begin{align}\label{Bd:in:min}
B_{d,-}^{{\rm in}}(\qstaru)
	= \Big(\alpha \frac {d-1}{(\qstaru)^2} \Big( 1 + \frac {\zeta_{\ell,d}}{d-1}\Big)
	- \Lm  b_{d,-}^{{\rm in}} (\qstaru)\III_1(\Lm \qstaru)\Big) \frac {y_1} { \Iell(\Lm  \qstaru)}.
\end{align}
To identify $B_{d,+}\qudst$ and $C_{d,+} \qudst$ we can use \eqref{lin:shell:2} and \eqref{lin:shell:3} to infer that 
\begin{align*}
	 & 
	B_{d,+} \qudst \Iell(\Lp \qstaru) + C_{d,+} \qudst  \Kell(\Lp \qstaru)
	\\ & \quad 	
	= \Big(\alpha \frac {d-1}{(\qstaru)^2} \Big( 1 + \frac {\zeta_{\ell,d}}{d-1}\Big)
	-\Lp  b_{d,+} \qudst\III_1 (\Lp \qstaru)
	+ \Lp  c_{d,+} \qudst\KKK_1(\Lp\qstaru)	
	\Big) y_1\\
	& \quad 	=: y_1 F_1\qudst,
	\\
	& \non
	B_{d,+} \qudst \Iell(\Lp \qstard)+ C_{d,+} \qudst  \Kell(\Lp \qstard)
	\\ & \quad 
	= \Big(-\alpha \frac {d-1}{(\qstard)^2} \Big( 1 + \frac {\zeta_{\ell,d}}{d-1}\Big)
	-\Lp  b_{d,+} \qudst\III_1 (\Lp \qstard)
	+ \Lp  c_{d,+} \qudst\KKK_1(\Lp\qstard)\Big)  y_2\\
	& \quad 
	=: y_2 F_2\qudst,
\end{align*}
which can be expressed as
\begin{align*}
	\begin{pmatrix}
	\Iell(\Lp \qstaru) & \Kell(\Lp \qstaru) \\
	\Iell(\Lp \qstard) & \Kell(\Lp \qstard)
	\end{pmatrix}
	\begin{pmatrix}
	B_{d,+} \qudst \\			
	C_{d,+} \qudst
	\end{pmatrix}
	=
	\begin{pmatrix}
	 y_1 F_1 \qudst \\
	 y_2 F_2\qudst
	\end{pmatrix}.
\end{align*}
Solving the linear system leads to 
\begin{align*}
	B_{d,+} \qudst & = \frac  { y_1 F_1 \qudst\Kell(\Lp\qstard) -    y_2 F_2 \qudst\Kell(\Lp\qstaru)}{\Iell(\Lp \qstaru)\Kell(\Lp \qstard)-\Iell(\Lp \qstard)\Kell(\Lp \qstaru)},
	\\
	C_{d,+} \qudst & = \frac  {\Iell(\Lp\qstaru) y_2 F_2\qudst -   \Iell (\Lp \qstard) y_1 F_1 \qudst}{\Iell(\Lp \qstaru)\Kell(\Lp \qstard)-\Iell(\Lp \qstard)\Kell(\Lp \qstaru)}.
\end{align*}
Next, observe from \eqref{shell:lin:stab:bdcond} that 
\begin{align*}
	 B_{d,-}^{{\rm ext}} (\qstard) = -C_{d,-}^{{\rm ext}}(\qstard) \frac{\Kl' \LmR}{\Il' \LmR},
\end{align*}
and upon substituting into \eqref{lin:shell:4} we can infer that
\begin{align}
	\non
	C_{d,-}^{{\rm ext}}(\qstard)
	& =
	\Big(- \alpha \frac {d-1}{(\qstard)^2} \Big( 1 + \frac {\zeta_{\ell,d}}{d-1}\Big)
	- \Lm  b_{d,-}^{{\rm ext}} (\qstard)\III_1(\Lm \qstard)
	+ \Lm  c_{d,-}^{{\rm ext}} (\qstard)\KKK_1(\Lm \qstard) 
	\Big) y_2
	\\ & \quad 	\label{Cd:extmin} 
	\times \Big( \Kell(\Lm  \qstard)  - \Big( \frac{\Kl' \LmR}{\Il' \LmR} \Big)\Iell(\Lm  \qstard) \Big)^{-1},
\end{align}
which in turn provides an explicit formula for $B^{d,{\rm in}}_-(\qstaru)$ as
\begin{align}
	\non
	B_{d, -}^{{\rm ext}}(\qstard)
	& =
	\Big(- \alpha \frac {d-1}{(\qstard)^2} \Big( 1 + \frac {\zeta_{\ell,d}}{d-1}\Big)
	- \Lm  b_{d,-}^{{\rm ext}} (\qstard)\III_1(\Lm \qstard)
	+ \Lm  c_{d,-}^{{\rm ext}} (\qstard)\KKK_1(\Lm \qstard) 
	\Big) y_2
	\\ & \quad 	\label{Bd:extmin} 
	\times \Big( \Iell(\Lm  \qstard)  - \Big( \frac{\Il' \LmR}{\Kl' \LmR} \Big)\Kell(\Lm  \qstard) \Big)^{-1}.
\end{align}
Recalling the solution {formulas \eqref{def:shell:mu}}, we calculate 
\begin{align*}
(\mustin)''(\qstaru) & = \lambda_-^2 \bmin(\qstaru) \III_0''(\lambda_- \qstaru), \\
(\mustpl)''(r) & = \lambda_+^2 b_{d,+}(\qstaru,\qstard) \III_0''(\Lp r) + \lambda_+^2 c_{d,+}(\qstaru,\qstard) \KKK_0''(\Lp r), \\
(\mumex)''(\qstard) & = \lambda_-^2 \bmex (\qstard) \III_0''(\Lm \qstard) + \lambda_-^2 \cmex(\qstard) \KKK_0''(\Lm \qstard).
\end{align*}
Then, upon substituting these explicit expressions as well as the {formulas} for $U_+$ and $U_-$ into \eqref{linstab:shell:6} and \eqref{linstab:shell:7}, and recalling the prime notation to indicate the derivative with respect to the radial component, we arrive at
\begin{align*}
	 \frac{2 \dot{\delta}}{{\delta}} y_1 
	&=
	-\big(m_+(\mustpl)'' - m_-(\mustin)''\big ) \Big|_{r=\qstaru} y_1
	-  \big(m_+(\Upl)'  - m_-(\Uin)' \big) \Big|_{r=\qstaru}
	\\
	& 
	= 
	- m_+ \Lp^2 [b_{d,+} \qudst  \Iz'' (\Lp \qstaru)
	+c_{d,+} \qudst  \Kz'' (\Lp \qstaru)] y_1
	\\
	& \quad + m_- \Lm^2 b_{d,-}^{{\rm in}} (\qstaru)  \Iz'' (\Lm \qstaru) y_1 	- m_+ \Lp B_{d,+} \qudst  \III_\ell' (\Lp \qstaru)\\
	& \quad 
	- m_+ \Lp C_{d,+} \qudst  \KKK_\ell' (\Lp \qstaru)
	+ m_- \Lm B_{d,-}^{{\rm in}} (\qstaru)  \III_\ell' (\Lm \qstaru),
	\\
	 \frac{2 \dot{\delta}}{{\delta}} y_2 
	& =
	-\big(m_+(\mu^\star_+)''
	- m_-(\mustext)''\big)\Big|_{r=\qstard} y_2
	-   \big(m_+(U_+)'-m_-(\Uext)')\big)\Big|_{r=\qstard}
	\\
	& 
	= 
	- m_+ \Lp^2 [b_{d,+} \qudst  \Iz'' (\Lp \qstard)
	+c_{d,+} \qudst  \Kz'' (\Lp \qstard)]y_2
	\\
	& \quad
		+  m_- \Lm^2 [b_{d,-}^{{\rm ext}} (\qstard) \Iz'' (\Lm \qstard)
	+ c_{d,-}^{{\rm ext}} (\qstard)  \Kz'' (\Lm \qstard)]y_2
	\\
	& \quad
	- m_+ \Lp [B_{d,+} \qudst \III_\ell' (\Lp \qstard)
	+C_{d,+} \qudst  \KKK_\ell' (\Lp \qstard)]
		\\
	& \quad
	+ m_- \Lm [B_{d,-}^{{\rm ext}} (\qstard)  \III_\ell' (\Lm \qstard)
	+  C_{d,-}^{{\rm ext}} (\qstard)  \KKK_\ell' (\Lm \qstard)],
\end{align*}
where we recall that $B_{d,-}^{{\rm in}}$, $B_{d,+}$, $C_{d,+}$, $B_{d,-}^{{\rm ext}}$ and $C_{d,-}^{{\rm ext}}$ contain $y_1$ and $y_2$. Then, we can express this system with a matrix structure
\begin{align*}
	& 2 \dot{\delta} 
	\begin{pmatrix}
	y_{1 }\\
	y_{2 }
	\end{pmatrix}
	=
	\begin{pmatrix}
	A_{11}  & A_{12} \\
	A_{21} & A_{22} 
	\end{pmatrix}
	\begin{pmatrix}
	y_{1 }\\
	y_{2 }
	\end{pmatrix}
	\delta = \delta \mathbf{A} \begin{pmatrix}
	y_{1 }\\
	y_{2 }
	\end{pmatrix}{,}
\end{align*}
where the entries of the matrix $\mathbf{A}$ are
\begin{equation}\label{app:radialshell:matrix}
\begin{aligned}
A_{11} & = - m_+ \Lp^2 [b_{d,+} \qudst  \Iz'' (\Lp \qstaru)
	+c_{d,+} \qudst  \Kz'' (\Lp \qstaru)]  \\
	& \quad + m_- \Lm^2 b_{d,-}^{{\rm in}} (\qstaru)  \Iz'' (\Lm \qstaru) \\
	& \quad -m_+ \lambda_+ \frac{ \Iell'(\lambda_+ \qstaru) \Kell(\lambda_+ \qstard) -  \Kell'(\lambda_+ \qstaru)  \Iell(\lambda_+ \qstard)}{\Iell(\Lp \qstaru) \Kell(\Lp \qstard) - \Iell(\Lp \qstard) \Kell(\Lp \qstaru)}F_1(\qstaru,\qstard) \\
	& \quad + m_- \lambda_- \Big(\alpha \frac {d-1}{(\qstaru)^2} \Big( 1 + \frac {\zeta_{\ell,d}}{d-1}\Big)
	- \Lm  b_{d,-}^{{\rm in}} (\qstaru)\III_1(\Lm \qstaru)\Big)\frac{ \Iell'(\lambda_- \qstaru)}{ \Iell(\Lm  \qstaru)}, \\
A_{12} & = m_+ \Lp \frac{ \Iell'(\Lp \qstaru) \Kell(\Lp \qstaru) - \Iell(\Lp \qstaru) \Kell'(\Lp \qstaru)}{\Iell(\Lp \qstaru) \Kell(\Lp \qstard) - \Iell(\Lp \qstard) \Kell(\Lp \qstaru)} F_2(\qstaru,\qstard), \\
A_{21} & = - m_+ \lambda_+ \frac{\Iell'(\Lp \qstard) \Kell(\Lp \qstard) -  \Iell(\Lp \qstard) \Kell'(\Lp \qstard)}{\Iell(\Lp \qstaru) \Kell(\Lp \qstard) - \Iell(\Lp \qstard) \Kell(\Lp \qstaru)}F_1(\qstaru,\qstard), \\
A_{22} & = - m_+ \Lp^2 [b_{d,+} \qudst  \Iz'' (\Lp \qstard)
	+c_{d,+} \qudst  \Kz'' (\Lp \qstard)]\\
	& \quad
		+  m_- \Lm^2 [b_{d,-}^{{\rm ext}} (\qstard) \Iz'' (\Lm \qstard)
	+ c_{d,-}^{{\rm ext}} (\qstard)  \Kz'' (\Lm \qstard)] \\
	& \quad 
	+m_+ \Lp \frac{\Iell'(\Lp \qstard) \Kell(\Lp \qstaru) - \Kell'(\Lp \qstard) \Iell(\Lp \qstaru)}{\Iell(\Lp \qstaru) \Kell(\Lp \qstard) - \Iell(\Lp \qstard) \Kell(\Lp \qstaru)} F_2(\qstaru,\qstard) \\
	& \quad + m_- \Lm \frac{\Kell'(\lambda_- \qstard) \Iell'(\Lm R) - \Kell'(\Lm R) \Iell'(\Lm \qstard)}{\Kell(\Lm \qstard) \Iell'(\Lm R) - \Kell'(\Lm R) \Iell(\Lm \qstard)} \\
	& \qquad \times \Big(- \alpha \frac {d-1}{(\qstard)^2} \Big( 1 + \frac {\zeta_{\ell,d}}{d-1}\Big)
	- \Lm  b_{d,-}^{{\rm ext}} (\qstard)\III_1(\Lm \qstard)
	\\
	& \qquad \qquad + \Lm  c_{d,-}^{{\rm ext}} (\qstard)\KKK_1(\Lm \qstard) 
	\Big).
\end{aligned}
\end{equation}
\footnotesize
\bibliographystyle{abbrv} 
\bibliography{refs} 

@article{Brangwynne13,
	author = {Brangwynne, Clifford P.},
	title = {Phase transitions and size scaling of membrane-less organelles},
	journal = {J. Cell Biol.},
	volume = {203},
	number = {6},
	pages = {875-881},
	year = {2013},
	month = {12},
	abstract = {The coordinated growth of cells and their organelles is a fundamental and poorly understood problem, with implications for processes ranging from embryonic development to oncogenesis. Recent experiments have shed light on the cell size–dependent assembly of membrane-less cytoplasmic and nucleoplasmic structures, including ribonucleoprotein (RNP) granules and other intracellular bodies. Many of these structures behave as condensed liquid-like phases of the cytoplasm/nucleoplasm. The phase transitions that appear to govern their assembly exhibit an intrinsic dependence on cell size, and may explain the size scaling reported for a number of structures. This size scaling could, in turn, play a role in cell growth and size control.},
	issn = {0021-9525},
	doi = {10.1083/jcb.201308087},
	url = {https://doi.org/10.1083/jcb.201308087},
	eprint = {https://rupress.org/jcb/article-pdf/203/6/875/1595812/jcb_201308087.pdf},
}

@article{HymanWJ14,
	author = "Hyman, Anthony A. and Weber, Christoph A. and Jülicher, Frank",
	title = "Liquid-Liquid Phase Separation in Biology", 
	journal= "Annu. Rev. Cell Dev. Biol.",
	year = "2014",
	volume = "30",
	pages = "39-58",
	doi = "https://doi.org/10.1146/annurev-cellbio-100913-013325",
	url = "https://www.annualreviews.org/content/journals/10.1146/annurev-cellbio-100913-013325",
	publisher = "Annual Reviews",
	issn = "1530-8995",
	type = "Journal Article",
	keywords = "biological liquids",
	keywords = "origin of life",
	keywords = "chemical potential",
	keywords = "P granules",
	keywords = "phase separation",
	abstract = "Cells organize many of their biochemical reactions in non-membrane compartments. Recent evidence has shown that many of these compartments are liquids that form by phase separation from the cytoplasm. Here we discuss the basic physical concepts necessary to understand the consequences of liquid-like states for biological functions.",
}

@article{
BrangwynneHyman11,
	author = {Clifford P. Brangwynne  and Timothy J. Mitchison  and Anthony A. Hyman },
	title = {Active liquid-like behavior of nucleoli determines their size and shape in Xenopus laevis oocytes},
	journal = {Proc. Natl. Acad. Sci.},
	volume = {108},
	number = {11},
	pages = {4334-4339},
	year = {2011},
	doi = {10.1073/pnas.1017150108},
	URL = {https://www.pnas.org/doi/abs/10.1073/pnas.1017150108},
	eprint = {https://www.pnas.org/doi/pdf/10.1073/pnas.1017150108},
	abstract = {For most intracellular structures with larger than molecular dimensions, little is known about the connection between underlying molecular activities and higher order organization such as size and shape. Here, we show that both the size and shape of the amphibian oocyte nucleolus ultimately arise because nucleoli behave as liquid-like droplets of RNA and protein, exhibiting characteristic viscous fluid dynamics even on timescales of \&lt;\&nbsp;1\&nbsp;min. We use these dynamics to determine an apparent nucleolar viscosity, and we show that this viscosity is ATP-dependent, suggesting a role for active processes in fluidizing internal contents. Nucleolar surface tension and fluidity cause their restructuring into spherical droplets upon imposed mechanical deformations. Nucleoli exhibit a broad distribution of sizes with a characteristic power law, which we show is a consequence of spontaneous coalescence events. These results have implications for the function of nucleoli in ribosome subunit processing and provide a physical link between activity within a macromolecular assembly and its physical properties on larger length scales.}}

@article{BrangwynneHyman09,
	author = {Clifford P. Brangwynne  and Christian R. Eckmann  and David S. Courson  and Agata Rybarska  and Carsten Hoege  and Jöbin Gharakhani  and Frank Jülicher  and Anthony A. Hyman },
	title = {Germline P Granules Are Liquid Droplets That Localize by Controlled Dissolution/Condensation},
	journal = {Science},
	volume = {324},
	number = {5935},
	pages = {1729-1732},
	year = {2009},
	doi = {10.1126/science.1172046},
	URL = {https://www.science.org/doi/abs/10.1126/science.1172046},
	eprint = {https://www.science.org/doi/pdf/10.1126/science.1172046},
	abstract = {In many organisms, the presumptive germ cells can be distinguished from somatic cells by the presence of distinctive cytoplasmic granules. In Caenorhabditis elegans, these P granules are more or less uniformly distributed in the oocyte and one-cell stage of the fertilized egg. By the end of the first cleavage, however, the anterior cell is essentially free of P granules, whereas the posterior cell still displays a prominent population of granules. Exactly how this process occurs and whether it involves directed migration of the granules is unclear. Now Brangwynne et al. (p. 1729, published online 21 May; see the Perspective by Le Goff and Lecuit) provide evidence that localization occurs by a quite different mechanism, controlled dissolution and condensation of granule components. This type of cytoplasmic remodeling by physicochemical mechanisms can now be looked for in other cellular and developmental systems. Localization of RNA and protein-rich germ-cell granules occurs by controlled dissolution and condensation. In sexually reproducing organisms, embryos specify germ cells, which ultimately generate sperm and eggs. In Caenorhabditis elegans, the first germ cell is established when RNA and protein-rich P granules localize to the posterior of the one-cell embryo. Localization of P granules and their physical nature remain poorly understood. Here we show that P granules exhibit liquid-like behaviors, including fusion, dripping, and wetting, which we used to estimate their viscosity and surface tension. As with other liquids, P granules rapidly dissolved and condensed. Localization occurred by a biased increase in P granule condensation at the posterior. This process reflects a classic phase transition, in which polarity proteins vary the condensation point across the cell. Such phase transitions may represent a fundamental physicochemical mechanism for structuring the cytoplasm.}}

@book {Teschl,
	AUTHOR = {Teschl, Gerald},
	TITLE = {Ordinary differential equations and dynamical systems},
	SERIES = {Graduate Studies in Mathematics},
	VOLUME = {140},
	PUBLISHER = {American Mathematical Society, Providence, RI},
	YEAR = {2012},
	PAGES = {xii+356},
	ISBN = {978-0-8218-8328-0},
	MRCLASS = {34-01 (37-01 39-01)},
	MRNUMBER = {2961944},
	MRREVIEWER = {Eleonora\ Catsigeras},
	DOI = {10.1090/gsm/140},
	URL = {https://doi.org/10.1090/gsm/140},
}

@article {EscherM11,
	AUTHOR = {Escher, Joachim and Matioc, Anca-Voichita},
	TITLE = {Well-posedness and stability analysis for a moving boundary
	problem modelling the growth of nonnecrotic tumors},
	JOURNAL = {Discrete Contin. Dyn. Syst. Ser. B},
	FJOURNAL = {Discrete and Continuous Dynamical Systems. Series B. A Journal
	Bridging Mathematics and Sciences},
	VOLUME = {15},
	YEAR = {2011},
	NUMBER = {3},
	PAGES = {573--596},
	ISSN = {1531-3492,1553-524X},
	MRCLASS = {35R37 (35B35 35J47 92D10)},
	MRNUMBER = {2774128},
	MRREVIEWER = {Gabriela\ Marinoschi},
	DOI = {10.3934/dcdsb.2011.15.573},
	URL = {https://doi.org/10.3934/dcdsb.2011.15.573},
}

@article {FriedmanHu06,
    AUTHOR = {Friedman, Avner and Hu, Bei},
     TITLE = {Bifurcation from stability to instability for a free boundary
              problem arising in a tumor model},
   JOURNAL = {Arch. Ration. Mech. Anal.},
  FJOURNAL = {Archive for Rational Mechanics and Analysis},
    VOLUME = {180},
      YEAR = {2006},
    NUMBER = {2},
     PAGES = {293--330},
      ISSN = {0003-9527,1432-0673},
   MRCLASS = {35R35 (35Q80 37N25 92C50)},
  MRNUMBER = {2210911},
MRREVIEWER = {Alain\ Brillard},
       DOI = {10.1007/s00205-005-0408-z},
       URL = {https://doi.org/10.1007/s00205-005-0408-z},
}

@book{Walter,
AUTHOR = {Walter, Wolfgang},
TITLE = {Ordinary Differential Equations},
PUBLISHER = {Springer New York},
YEAR = {1998},
}

@article{BergmannBBetal23,
AUTHOR = {Bergmann, A.M. and  Bauermann, J. and Bartolucci, G. and  
 Donau, C. and   Stasi, M. and   Holtmannsp\"otter, A.-L. and  J\"ulicher, F. and Weber, C.A. and Boekhoven, J.},
TITLE={Liquid spherical shells are a non-equilibrium steady state of active droplets},
JOURNAL = {Nat. Commun.},
VOLUME ={14}, 
PAGES= {6552},
YEAR= {2023},
}

@article{Amos,
AUTHOR = {Amos, D. E.},
TITLE = {Computations of Modified {B}essel Functions and Their Ratios},
JOURNAL = {Math. Comp.},
FJOURNAL = {Mathematics of Computation},
VOLUME = {28},
YEAR = {1974},
NUMBER = {125},
PAGES = {239--251},
}

@preamble{
   "\def\soft#1{\leavevmode\setbox0=\hbox{h}\dimen7=\ht0\advance
    \dimen7 by-1ex\relax\if t#1\relax\rlap{\raise.6\dimen7
    \hbox{\kern.3ex\char'47}}#1\relax\else\if T#1\relax
    \rlap{\raise.5\dimen7\hbox{\kern1.3ex\char'47}}#1\relax
    \else\if d#1\relax\rlap{\raise.5\dimen7\hbox{\kern.9ex
    \char'47}}#1\relax\else\if D#1\relax\rlap{\raise.5\dimen7
    \hbox{\kern1.4ex\char'47}}#1\relax\else\if l#1\relax
    \rlap{\raise.5\dimen7\hbox{\kern.4ex\char'47}}#1\relax
    \else\if L#1\relax\rlap{\raise.5\dimen7\hbox{\kern.7ex
    \char'47}}#1\relax\else\message{accent \string\soft
    \space #1 not defined!}#1\relax\fi\fi\fi\fi\fi\fi} "
}

@Article{willmore,
AUTHOR = {Barrett, John W. and Garcke, Harald and N\"urnberg, Robert},
title = {Parametric Approximation of {W}illmore Flow and Related Geometric
Evolution Equations},
journal = {SIAM J. Sci. Comput.},
volume = {31},
number = 1,
pages = {225--253},
year = {2008},
ISSN = {1064-8275},
MRCLASS = {53C44 (35K55 65M99)},
MRNUMBER = {2460777},
doi = {10.1137/070700231},
URL = {https://doi.org/10.1137/070700231}
}

@Article{dendritic,
AUTHOR = {Barrett, John W. and Garcke, Harald and N\"urnberg, Robert},
TITLE = {On stable parametric finite element methods for the {S}tefan problem 
and the {M}ullins--{S}ekerka problem with applications to dendritic growth},
JOURNAL = {J. Comput. Phys.},
FJOURNAL = {Journal of Computational Physics},
volume = {229},
number = 18,
pages = {6270--6299},
year = {2010},
doi = {10.1016/j.jcp.2010.04.039},
URL = {https://doi.org/10.1016/j.jcp.2010.04.039},
ISSN = {0021-9991},
MRCLASS = {80A22 (65M12 65M60 80M10)},
MRNUMBER = {2660305},
MRREVIEWER = {Damian S\l ota},
}

@Article{crystal,
AUTHOR = {Barrett, John W. and Garcke, Harald and N\"urnberg, Robert},
TITLE = {Finite-element approximation of one-sided {S}tefan problems with 
anisotropic, approximately crystalline, {G}ibbs--{T}homson law},
JOURNAL = {Adv. Differ. Equ.},
volume = {18},
number = {3-4},
pages = {383--432},
year = 2013,
url = {http://projecteuclid.org/euclid.ade/1360073021},
ISSN = {1079-9389},
MRCLASS = {80A22 (35R35 65M60 74N05)},
MRNUMBER = {3060200},
}

@incollection{bgnreview, comment = {was: bgnreview19handbook},
AUTHOR = {Barrett, John W. and Garcke, Harald and N\"urnberg, Robert},
TITLE = {Parametric finite element approximations of curvature driven 
interface evolutions},
booktitle = {Handb. Numer. Anal.},
volume = 21,
editor = {Andrea Bonito and Ricardo H. Nochetto},
PAGES = {275--423},
PUBLISHER = {Elsevier},
address = {Amsterdam},
YEAR = {2020},
DOI = {10.1016/bs.hna.2019.05.002},
url = {https://doi.org/10.1016/bs.hna.2019.05.002},
}

@PHDThesis{Benninghoff15,
author = {Heike Benninghoff},
title = {Parametric Methods for Image Processing Using Active Contours with
Topology Changes},
school = {University Regensburg},
address = {Regensburg},
year = 2015
}

@article{BenninghoffG17,
AUTHOR = {Benninghoff, Heike and Garcke, Harald},
TITLE = {Segmentation of three-dimensional images with parametric
active surfaces and topology changes},
JOURNAL = {J. Sci. Comput.},
FJOURNAL = {Journal of Scientific Computing},
VOLUME = {72},
YEAR = {2017},
NUMBER = {3},
PAGES = {1333--1367},
ISSN = {0885-7474,1573-7691},
MRCLASS = {94A08 (65M60 68U10)},
MRNUMBER = {3687904},
DOI = {10.1007/s10915-017-0401-3},
URL = {https://doi.org/10.1007/s10915-017-0401-3},
}

@article{CristiniLN03,
AUTHOR = {Cristini, V. and Lowengrub, J. and Nie, Q.},
TITLE = {Nonlinear simulation of tumor growth},
JOURNAL = {J. Math. Biol.},
YEAR = {2003},  
VOLUME = {46}, 
NUMBER = {3},
PAGES = {191--224},
SSN = {0303-6812},
MRCLASS = {92C15 (35J15 65R20)},
MRNUMBER = {1968541},
URL = {https://doi.org/10.1007/s00285-002-0174-6},
}

@article{CristiniLLW09,
AUTHOR = {Cristini, Vittorio and Li, Xiangrong and Lowengrub, John S.
and Wise, Steven M.},
TITLE = {Nonlinear simulations of solid tumor growth using a mixture
model: invasion and branching},
JOURNAL = {J. Math. Biol.},
FJOURNAL = {Journal of Mathematical Biology},
VOLUME = {58},
YEAR = {2009},
NUMBER = {4-5},
PAGES = {723--763},
ISSN = {0303-6812},
CODEN = {JMBLAJ},
MRCLASS = {92C15 (35Q80 92-08 92C50)},
MRNUMBER = {2471308},
MRREVIEWER = {Matthias Wilhelm Winter},
DOI = {10.1007/s00285-008-0215-x},
URL = {https://doi.org/10.1007/s00285-008-0215-x},
}

@article{Davis04,
AUTHOR = {Davis, Timothy A.},
TITLE = {Algorithm 832: {UMFPACK} {V}4.3--an unsymmetric-pattern
multifrontal method},
JOURNAL = {ACM Trans. Math. Software},
FJOURNAL = {Association for Computing Machinery. Transactions on
Mathematical Software},
VOLUME = {30},
YEAR = {2004},
NUMBER = {2},
PAGES = {196--199},
ISSN = {0098-3500},
CODEN = {ACMSCU},
MRCLASS = {65F50 (65-04 65F05)},
MRNUMBER = {2075981},
DOI = {10.1145/992200.992206},
URL = {https://doi.org/10.1145/992200.992206},
}

@article{GarckeLNS18, comment = {was: GarckeLNS17},
AUTHOR = {Garcke, Harald 
and Lam, Kei Fong and N\"urnberg, Robert and Sitka, Emanuel},
title = {A multiphase {C}ahn--{H}illiard--{D}arcy model for tumour growth with
necrosis},
JOURNAL = {Math. Models Methods Appl. Sci.},
FJOURNAL = {Mathematical Models and Methods in Applied Sciences},
volume = 28,
number = 3,
pages = {525--577},
year = 2018,
doi = {10.1142/S0218202518500148},
url = {https://doi.org/10.1142/S0218202518500148},
ISSN = {0218-2025},
MRCLASS = {92C42 (35C20 35K57 35R35 65M60 92C30)},
MRNUMBER = {3747020},
MRREVIEWER = {Yves Dumont},
}

@article{GarckeLSS16,
AUTHOR = {Garcke, Harald 
and Lam, Kei Fong and Sitka, Emanuel and Styles, Vanessa},
TITLE = {A {C}ahn--{H}illiard--{D}arcy model for tumour growth with
chemotaxis and active transport},
JOURNAL = {Math. Models Methods Appl. Sci.},
FJOURNAL = {Mathematical Models and Methods in Applied Sciences},
VOLUME = {26},
YEAR = {2016},
NUMBER = {6},
PAGES = {1095--1148},
ISSN = {0218-2025},
MRCLASS = {92B05 (35K57 35R35 65M60)},
MRNUMBER = {3484570},
DOI = {10.1142/S0218202516500263},
URL = {https://doi.org/10.1142/S0218202516500263},
}

@article{Hawkins-DaarudZO12,
AUTHOR = {Hawkins-Daarud, A. and van der Zee, K. G. and Oden, J. T.},
TITLE = {Numerical simulation of a thermodynamically consistent
four-species tumor growth model},
JOURNAL = {Int. J. Numer. Methods Biomed. Eng.},
FJOURNAL = {International Journal for Numerical Methods in Biomedical
Engineering},
VOLUME = {28},
YEAR = {2012},
NUMBER = {1},
PAGES = {3--24},
ISSN = {2040-7939},
MRCLASS = {65M60 (92C60)},
MRNUMBER = {2898905},
DOI = {10.1002/cnm.1467},
URL = {https://doi.org/10.1002/cnm.1467},
}

@Book{Alberta,  comment = {also: AlbertaHG},
author = {A. Schmidt and K. G. Siebert},
title = {Design of Adaptive Finite Element Software: The Finite Element Toolbox
{ALBERTA}},
series = {Lect. Notes Comput. Sci. Eng.},
volume = 42,
publisher = {Springer-Verlag},
address = {Berlin},
ISBN = {3-540-22842-X},
MRCLASS = {65-02 (65-04 65N30 65N50 68P05)},
MRNUMBER = {2127659},
year = 2005,
PAGES = {xii+315},
doi = {10.1007/b138692},
url = {https://doi.org/10.1007/b138692},
}

@article{bauermann2023formation,
  title={Formation of liquid shells in active droplet systems},
  author={Bauermann, Jonathan and Bartolucci, Giacomo and Boekhoven, Job and Weber, Christoph A and J{\"u}licher, Frank},
  journal={Phys. Rev. Res.},
  volume={5},
  number={4},
  pages={043246},
  year={2023},
  publisher={APS}
}

@article{zwickerostwald, 
title={Suppression of {O}stwald ripening in active emulsions},
author={Zwicker, David and Hyman, Anthony A. and J{\"u}licher, Frank},
journal={Phys. Rev. E.},
volume={92},
number={1},
pages={012317},
year={2015}
}

@article{Active_drops,
	title = {Growth and Division of Active Droplets Provides a Model for Protocells},
	journal = {Nat. Phys.},
	volume = {13},
	year = {2016},
    number = {4},
    pages = {408-413},
	pages = {1745{\textendash}2481},
	publisher = {Nature Publishing Group},
	abstract = {},
	url = {http://www.nature.com/nphys/journal/vaop/ncurrent/pdf/nphys3984.pdf},
	author = {Zwicker, David and Seyboldt, Rabea and Weber, Christoph A. and Hyman, Anthony A. and J{\"u}licher, Frank}
}

@book {BDGP,
    AUTHOR = {B\"ansch, Eberhard and Deckelnick, Klaus and Garcke, Harald
              and Pozzi, Paola},
     TITLE = {Interfaces: modeling, analysis, numerics},
    SERIES = {Oberwolfach Seminars},
    VOLUME = {51},
 PUBLISHER = {Birkh\"auser/Springer, Cham},
      YEAR = {[2023] \copyright2023},
     PAGES = {xii+178},
      ISBN = {978-3-031-35549-3; 978-3-031-35550-9},
   MRCLASS = {65M60 (65N30)},
  MRNUMBER = {4676342},
       DOI = {10.1007/978-3-031-35550-9},
       URL = {https://doi.org/10.1007/978-3-031-35550-9},
}

@book {WatsonBessel,
    AUTHOR = {G. N. Watson},
     TITLE = {A Treatise on the Theory of Bessel Functions},
 PUBLISHER = {Cambridge Mathematical Library},
      YEAR = {1995},
     PAGES = {814},
}

@misc{NIST:DLMF,
         key = "{\relax DLMF}",
       title = "{\it NIST Digital Library of Mathematical Functions}",
howpublished = "\url{https://dlmf.nist.gov/}, Release 1.2.1 of 2024-06-15",
         url = "https://dlmf.nist.gov/",
        note = "F.~W.~J. Olver, A.~B. {Olde Daalhuis}, D.~W. Lozier, B.~I. Schneider,
                R.~F. Boisvert, C.~W. Clark, B.~R. Miller, B.~V. Saunders,
                H.~S. Cohl, and M.~A. McClain, eds."}

@article{ESCHER1998267,
title = {A Center Manifold Analysis for the {M}ullins--{S}ekerka Model},
journal = {J. Differential Equations},
volume = {143},
number = {2},
pages = {267-292},
year = {1998},
issn = {0022-0396},
doi = {https://doi.org/10.1006/jdeq.1997.3373},
url = {https://www.sciencedirect.com/science/article/pii/S0022039697933738},
author = {Joachim Escher and Gieri Simonett},
keywords = {Mullins–Sekerka model, mean curvature, free boundary problem, generalized motion by mean curvature, center manifold},
abstract = {The Mullins--Sekerka model is a nonlocal evolution model for hypersurfaces, which arises as a singular limit for the Cahn–Hilliard equation. We show that classical solutions exist globally and tend to spheres exponentially fast, provided that they are close to a sphere initially. Our analysis is based on center manifold theory and on maximal regularity.}
}

@article{OOL,
author = {Garcke, Harald and Lam, Kei Fong and N\"urnberg, Robert and 
Signori, Andrea},
     TITLE = {On a {C}ahn--{H}illiard equation for the growth and division of chemically active droplets modelling protocells.},
journal = {Eur. J. Appl. Math.},
volume = {doi:10.1017/S0956792525100211},
      YEAR = {2025},
     PAGES = {1-31},
}

\end{document}